\renewcommand{\epsilon}{\varepsilon}
\newcommand{\N}{\mathbb{N}}
\newcommand{\Z}{\mathbb{Z}}
\newcommand{\R}{\mathbb{R}}
\newcommand{\C}{\mathbb{C}}
\renewcommand{\Re}{\operatorname{Re}}
\newcounter{mtheorem}
\newcommand{{\vol}}{\rm vol}
\newcommand{\Ric}{\operatorname{Ric}}
\newcommand{\Rm}{\operatorname{Rm}}
\def\tr{\operatorname{tr}}
\newtheoremstyle{fancy}{}{}{\itshape}{}{\textbf\bgroup}{.\egroup}{ }{}
\newtheoremstyle{fancy2}{}{}{\rm}{}{\textbf\bgroup}{.\egroup}{ }{}
\theoremstyle{fancy}
\newtheorem{theorem}{Theorem}[section]
\newtheorem{lemma}[theorem]{Lemma}
\newtheorem{corollary}[theorem]{Corollary}
\newtheorem{prop}[theorem]{Proposition}
\theoremstyle{fancy2}
\newtheorem{definition}[theorem]{Definition}
\newtheorem{remark}[theorem]{Remark}
\newtheorem{claim}[theorem]{Claim}
\newtheorem{ques}[theorem]{Question}
\setlist{leftmargin=*}
\numberwithin{equation}{section}
\begin{document}
\title{Uniqueness of asymptotically conical K\"ahler-Ricci flow}
\date{\today}
\author{Longteng Chen}
\address{Université Paris-Saclay, CNRS, Laboratoire de Mathématiques d'Orsay, 91405 Orsay, France }
\email{longteng.chen@universite-paris-saclay.fr}
\begin{abstract}
   We study the uniqueness problem for the Kähler-Ricci flow with a conical initial condition. Given a complete gradient expanding Kähler-Ricci soliton on a non compact manifold with quadratic curvature decay, including its derivatives, we establish that any complete solution to the Kähler-Ricci flow emerging from the soliton's tangent cone at infinity—appearing as a Kähler cone—must coincide with the forward self-similar Kähler-Ricci flow associated with the soliton, provided certain conditions hold. Specifically, if its Kähler form remains in the same cohomology class as that of the soliton's self-similar Kähler-Ricci flow, its full Riemann curvature operator is bounded for each fixed positive time, its Ricci curvature is bounded from above by $\frac{A}{t}$, its scalar curvature is bounded from below by $-\frac{A}{t}$, and it shares a same Killing vector field with the soliton metric. This paper gives a partial answer to a question in \cite{Feldman-Ilmanen-Knopf}, and generalizes the earlier work of \cite{Conlon-Deruelle} and the work of \cite{Conlon-Deruelle-Sun}.
\end{abstract}
\maketitle
\markboth{Longteng Chen}{Uniqueness of asymptotically conical K\"ahler-Ricci flow}
\section{Introduction}\label{Introduction}
\subsection{Overview}
 A smooth family $g(t)_{t\in (0,T)}$ of Riemannian metrics on $M$ as a smooth manifold is a solution to Ricci flow if
 \begin{equation}
     \frac{\partial}{\partial t}g(t)=-2\Ric(g(t)),\quad t\in (0,T).
 \end{equation}
The Ricci flow was introduced by R. Hamilton in \cite{R.Hamilton} and has since become a powerful tool in differential geometry. However, since the Ricci flow equation is a non-linear degenerated parabolic equation, the question of uniqueness has garnered significant interest among mathematicians.

For compact manifolds, R. Hamilton established the uniqueness property in \cite{R.Hamilton}. However, for non-compact manifolds, the situation is more complicated. Assuming that the initial metric is smooth and complete with uniformly bounded curvature, W.-X. Shi \cite{W-X.Shi} constructed a complete solution to the Ricci flow with bounded curvature that evolves from this initial metric. Moreover, B.-L. Chen and X.-P. Zhu \cite{Chen-Zhu} proved that if $g_1(t)_{t\in [0,T]}$ and $g_2(t)_{t\in [0,T]}$ are two complete solutions to the Ricci flow with uniformly bounded curvature and satisfy $g_1(0) = g_2(0)$, then they must be identical, i.e., $g_1(t) \equiv g_2(t)$. Reciprocally, B. Kotschwar \cite{Kotschwar3} showed the backward uniqueness property for the Ricci flow with bounded curvature, i.e. assume that $g_1(t)_{t\in [0,T]}$ and $g_2(t)_{t\in [0,T]}$ are two complete solutions to the Ricci flow with uniformly bounded curvature such that $g_1(T)=g_2(T)$, then $g_1(t)=g_2(t)$ for all $t\in [0,T]$.

Instead of assuming bounded curvature along the Ricci flow, B. Kotschwar \cite{Kotschwar1} used en energy method to establish a uniqueness result under different curvature bounds and metric equivalence conditions along the flow, assuming that the smooth complete initial metric satisfies a volume growth condition. B. Kotschwar \cite{Kotschwar2} also proved that if two complete solutions of the Ricci flow share the same smooth initial metric and satisfy the condition that the full Riemann curvature operator tensor of one solution and the other's Ricci curvature tensor are bounded by $\frac{C}{t^\delta}$ for some $\delta \in [0,1)$, then the two solutions must be identical.

However, his method does not apply to the case where the curvature is bounded by $\frac{C}{t}$. This condition plays a crucial role in the theory of Ricci flow, as it is invariant under scaling. Specifically, if $g(t)_{t \in (0, T)}$ is a solution to the Ricci flow and its Riemann curvature is bounded by $\frac{C}{t}$, then for any $\lambda > 0$, the \emph{dilated} Ricci flow defined by

\begin{equation*}
    g_\lambda(t):=\lambda g(\lambda^{-1}t),\quad t\in (0,\lambda T),
\end{equation*}
is also a solution to the Ricci flow. Moreover, its curvature remains bounded by $\frac{C}{t}$.

The most optimal result to date is probably the work of M.-C. Lee \cite{M-C.Lee}, who proved that two complete solutions to the Ricci flow with the same smooth initial metric and curvatures bounded by $\frac{C}{t}$ must coincide.

Consider a smooth manifold $M$ of dimension 3, which admits a smooth and complete Riemannian metric $g$ with a bounded non negative curvature operator ($0\le\Rm(g)\le K_0$), B.-L. Chen \cite{B-L.Chen} showed that two complete solutions of Ricci flow living on $[0,T]$ with $g$ as their common initial metric must coincide on $[0,\min\{T,\frac{1}{4K_0}\}]$. In particular, the only complete solution of the Ricci flow starting from the Euclidean metric as initial metric in $\R^3$ must be constant in time.

On the non-uniqueness side, people seek counterexamples to the uniqueness of the Ricci flow. In their paper \cite{Angenent-Knopf}, S. Angenent and D. Knopf demonstrated that uniqueness should not be expected to hold for weak solutions of the Ricci flow in dimension $n \geq 5$ if the flow is allowed to continue past singularities.  

Specifically, for any integers $p_1, p_2 \geq 2$ satisfying $p_1 + p_2 \leq 8$, and for any $K \in \mathbb{N}$, they constructed a complete shrinking soliton metric $g_K$ on $\mathbb{S}^{p_1} \times \mathbb{R}^{p_2+1}$. The associated backward Ricci flow $g_K(t)$ starts at $t = -1$ and converges to a conical metric on $\mathbb{S}^{p_1} \times \mathbb{S}^{p_2} \times (0, +\infty)$ as $t \to 0^-$. Moreover, they showed that there exist at least $K$ distinct, non-isometric forward desingularizations by Ricci flow expanding solitons on $\mathbb{S}^{p_1} \times \mathbb{R}^{p_2+1}$, as well as at least $K$ distinct, non-isometric forward desingularizations by expanding solitons on $\mathbb{R}^{p_1+1} \times \mathbb{S}^{p_2}$. However, in the latter case, we need to note that the topology of the manifold changes.  

It is important to note that a counterexample to the uniqueness of the Ricci flow in a smooth space-time setting remains an open question.

The Ricci flow also plays a significant role in Kähler geometry. Given a complex manifold $(M, J)$  with complex structure  $J$, a smooth family of Riemannian metrics $g(t)$ for $t \in (0,T)$ is said to be a solution to the Kähler-Ricci flow if and only if $g(t)$ remains a Kähler metric with respect to $J$ for all $t$, and it satisfies the evolution equation  
\begin{equation}\label{KahlerRicciFlowEq}
\frac{\partial}{\partial t} g(t) = -\operatorname{Ric}(g(t)), \quad \forall t \in (0,T).
\end{equation}

We view equation \eqref{KahlerRicciFlowEq} as a system of partial differential equations. From the dynamical systems perspective, it is important to study self-similar solutions to the Kähler–Ricci flow, as they correspond to fixed points in the moduli space of solutions. Every self-similar solution is given by a \emph{K\"ahler-Ricci soliton} metric.

 A \emph{K\"ahler-Ricci soliton} is a triple $(M,g,X)$, where $M$ is a complex manifold with complex structure $J$ and a complete K\"ahler metric $g$ and a complete real-holomorphic vector field $X$ satisfying the equation
\begin{equation}\label{soliton2}
\frac{1}{2}\mathcal{L}_{X}g=\Ric(g)-\lambda g
\end{equation}
for some $\lambda\in\{-1,\,0,\,1\}$. If $X=\nabla^{g} f$ for some real-valued smooth function $f$ on $M$,
then we say that $(M,\,g,\,X)$ is \emph{gradient}. In this case, the soliton equation \eqref{soliton2}
reduces to\begin{equation}\label{soliton1}
\operatorname{Hess}_{g}(f)=\Ric(g)-\lambda g,
\end{equation}
if $w$ is the K\"ahler form of $g$ and $\rho_{\omega}$ is the Ricci form of $\omega$, we rewrite \eqref{soliton1} as:
\begin{equation}\label{krseqn}
i\partial\bar{\partial}f=\rho_{\omega}-\lambda\omega.
\end{equation}
For K\"ahler-Ricci solitons $(M,g,X)$, the vector field $X$ is called the
\emph{soliton vector field}. Its completeness is guaranteed by the completeness of $g$
\cite{Z-H.Zhang}. If the soliton is gradient, then
the smooth real-valued function $f$ satisfying $X=\nabla^g f$ is called the \emph{soliton potential}. It is unique up to a constant.
A K\"ahler-Ricci soliton are called \emph{steady} if $\lambda=0$, \emph{expanding}
if $\lambda=-1$, and \emph{shrinking} if $\lambda=1$ in equations \eqref{soliton2} and \eqref{soliton1} respectively.

Every K\"ahler-Ricci soliton is related to a self-similar solution to K\"ahler-Ricci flow, for example, if $(M,g,X)$ is a complete expanding K\"ahler-Ricci soliton, we use $(\Phi^X_{\tau},\tau\in \R)$ to denote the flow of the complete, real-holomorphic vector field $X$. We define the biholomorphism $\Phi_t: M\mapsto M$ such that $\Phi_t=\Phi^X_{-\frac{1}{2}\log t}$ for all $t>0$. Then $g(t):=t\Phi_t^*g$ is a solution to K\"ahler-Ricci flow, which means
\begin{equation}\label{Kahler-Ricci flow equation}
    \frac{\partial}{\partial t}g(t)=-\Ric(g(t)), \quad \forall t>0.
\end{equation}

In their work \cite{Feldman-Ilmanen-Knopf}, for any real number $p>0$, M. Feldman, T. Ilmanen, and D. Knopf constructed a complete expanding Kähler-Ricci soliton $g$ on the total space of the line bundle $\mathcal{O}(-k)$ over $\mathbb{C}P^{n-1}$ for $k > n\ge 2$. Furthermore, they observed that the associated self-similar forward Kähler-Ricci flow $g(t)$ converges, as $t \to 0^+$, to a conical metric away from the zero section. This limiting space is exactly the Kähler cone $(\C^n/\Z^k, i\partial\bar\partial\left(\frac{|\cdot|^{2p}}{p}\right))$ for given $p>0$.

In their paper, they have also asked the following question:
\begin{ques}\cite[Question 4]{Feldman-Ilmanen-Knopf}\label{question of FIK}
    Does the Ricci flow evolve uniquely after a singularity? If not, is there a selection principle for the best flow? 
\end{ques}

Later, R.J.Conlon and A.Deruelle's paper \cite{Conlon-Deruelle} partially answered this question for a K\"ahler cone. They gave necessary and suﬃcient conditions for a K\"ahler equivariant resolution of a K\"ahler cone, with the resolution satisfying one of a number of auxiliary conditions, to admit a \emph{unique} asymptotically conical (AC) expanding gradient K\"ahler-Ricci soliton.

In the paper \cite{Conlon-Deruelle-Sun}, R.J.Conlon, A.Deruelle and S. Sun generalized the result of Conlon-Deruelle \cite{Conlon-Deruelle}. They showed that an expanding gradient K\"ahler-Ricci soliton with quadratic curvature decay (including all derivatives), which desingularizes a Kähler cone, exists and is unique if and only if the Kähler cone admits a smooth canonical model.
\subsection{Main result}
We regard \eqref{KahlerRicciFlowEq} as a PDE system with singular initial data, in this paper, we give a partial answer to Question \ref{question of FIK} for a K\"ahler cone with a smooth canonical model. Our main theorem states as follows:
\begin{theorem}[Uniqueness theorem]\label{main}
Let $(C_0,g_0)$ be a K\"ahler cone of complex dimension $n\ge2$ which admits a smooth canonical model $M$. Let $\pi: M\mapsto C_0$ be a smooth K\"ahler resolution with exceptional set $E$ such that the canonical line bundle $K_M|_E$ is $\pi-$ample, i.e. $c_1(K_M|_E)>0$.

Let $(M,g,X)$ be the unique (up to pullback by biholomorphism) complete expanding gradient K\"ahler-Ricci soliton whose curvature $\operatorname{Rm}(g)$ satisfies
    \begin{equation*}
A_{k}(g):=\sup_{x\in M}|(\nabla^{g})^{k}\operatorname{Rm}(g)|_{g}(x)d_{g}(p,\,x)^{2+k}<\infty\quad\textrm{for all $k\in\mathbb{N}_{0}$},
\end{equation*}
where $d_{g}(p,\,\cdot)$ denotes the distance to a fixed point $p\in M$ with respect to $g$, with tangent cone at infinity $(C_0,g_0)$. Let $g(t):=t\Phi_t^*g$ be the self-similar forward K\"ahler-Ricci flow associated to $g$.

        Assume that $(g_\varphi(t))_{t\in (0,T)}$, for some $0<T\le\infty$, is a smooth and complete solution to K\"ahler-Ricci flow such that
    \begin{enumerate}
     \item \textnormal{(Conical condition)} $\pi_*g_\varphi(t)$ converges to $g_0$ locally smoothly when $t$ goes to 0.
    \item \textnormal{(Cohomology condition)} There exists a smooth real-valued function $\varphi\in C^\infty(M\times(0,T))$ such that \begin{equation*}
        w_\varphi(t)=w(t)+i\partial\bar{\partial}\varphi(t), \quad \forall t\in (0,T),
    \end{equation*} 
    where $w_\varphi(t)$(resp. $w(t)$) denotes the K\"ahler form of $g_\varphi(t)$(resp. $g(t)$).
    \item\textnormal{(Killing condition)} The Reeb vector field $JX$ is a Killing vector field for $g_\varphi(t_0)$ for some $t_0\in (0,T)$. Here $J$ is the complex structure of $M$.
     \item \textnormal{(Curvature condition)} For each $t\in (0,T)$, $|\Rm(g_\varphi(t))|_{g_\varphi(t)}$ is bounded on $M$. And there exists a constant $A>0$ such that 
     \begin{equation*}
         \begin{split}
             &\Ric(g_\varphi(t))\le \frac{A}{t}g_\varphi(t),\\
             &R_{g_\varphi(t)}\ge-\frac{A}{t},
         \end{split}
     \end{equation*}for $t\in (0,T)$. Here, $\Ric(g_\varphi(t))$ denotes the Ricci curvature tensor of $g_\varphi(t)$, $R_{g_\varphi(t)}$ denotes the scalar curvature of $g_\varphi(t)$.
    \end{enumerate}
    Then $g_\varphi(t)=g(t)$ for all $t\in (0,T)$.
\end{theorem}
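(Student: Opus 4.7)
The plan is to reduce the uniqueness of the full flow to the vanishing of a single scalar potential $\varphi$, to propagate the $S^{1}$-symmetry of the soliton to the candidate flow $g_{\varphi}(t)$, and then to close the argument by a weighted energy estimate adapted to the soliton measure, with the singular initial data encoded through the conical hypothesis.

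First I would use the cohomology condition to derive a scalar parabolic complex Monge-Amp\`ere equation. Subtracting the two Ricci flow equations $\partial_{t}\omega=-\rho_{\omega(t)}$ and $\partial_{t}\omega_{\varphi}=-\rho_{\omega_{\varphi}(t)}$, and using the identity $\rho_{\omega_{\varphi}}-\rho_{\omega}=-i\partial\bar\partial\log(\omega_{\varphi}^{n}/\omega^{n})$, one obtains
\begin{equation*}
\partial_{t}\varphi(t)=\log\frac{\omega_{\varphi}(t)^{n}}{\omega(t)^{n}}+c(t),
\end{equation*}
where $c(t)$ depends only on time and can be absorbed into a redefinition of $\varphi$. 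The conical condition then translates, via $\pi_{*}g_{\varphi}(t)\to g_{0}$ and the analogous asymptotics for $g(t)$, into $i\partial\bar\partial\varphi(t)\to 0$ in $C^{\infty}_{\mathrm{loc}}(M\setminus E)$ as $t\to 0^{+}$, and after a correct normalization of $c(t)$ into $\varphi(t)\to 0$ locally. Before running the energy estimate I would promote the Killing condition at $t_{0}$ to every $t\in(0,T)$ by a Kotschwar-type propagation of symmetries: the tensor $\mathcal{L}_{JX}g_{\varphi}(t)$ satisfies a homogeneous linear parabolic system along the Ricci flow whose coefficients are controlled by the per-time curvature bound, it vanishes at $t_{0}$, and therefore vanishes on $(0,T)$ by forward and backward uniqueness. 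This matches the symmetry of $g(t)$ and of the soliton potential $f$, and legitimizes using $e^{-f}$ as a weight.

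Next I would set up a weighted energy argument in the spirit of Kotschwar, with the natural candidate
\begin{equation*}
\mathcal{E}(t)=\int_{M}\varphi(t)^{2}\,e^{-f}\,\omega(t)^{n},
\end{equation*}
where $f$ is the soliton potential of $g$, which grows quadratically at infinity by the curvature assumption on $g$. Differentiating $\mathcal{E}$ in $t$ and inserting the Monge-Amp\`ere equation produces, via the concavity of $\log\det$ on Hermitian matrices, a nonpositive dissipation term built from a mixed Laplacian of $\varphi$ with respect to an average of $g(t)$ and $g_{\varphi}(t)$. The remaining terms are absorbed using the curvature bounds $\Ric(g_{\varphi}(t))\le(A/t)g_{\varphi}(t)$ and $R_{g_{\varphi}(t)}\ge-A/t$, which are scale invariant and hence match the self-similar scaling of $g(t)$; they are expected to yield a Gr\"onwall-type inequality
\begin{equation*}
\frac{d}{dt}\mathcal{E}(t)\le \frac{C}{t}\,\mathcal{E}(t),
\end{equation*}
with $C$ depending only on $A$ and on the asymptotic geometry of $g$. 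Integrating from $s$ to $t$ gives $\mathcal{E}(t)\le C(t/s)^{C}\,\mathcal{E}(s)$, and letting $s\to 0$ under the normalized conical condition forces $\mathcal{E}\equiv 0$, hence $\varphi\equiv 0$ and $g_{\varphi}(t)=g(t)$.

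The main obstacle is to make the energy identity globally valid on the non-compact manifold $M$ and the limit $s\to 0$ in Gr\"onwall rigorous. At spatial infinity, one must exploit the exponential decay of $e^{-f}$ (of order $e^{-r^{2}/4}$ along the soliton) against the polynomial volume growth of the asymptotically conical manifold to absorb the boundary contributions coming from cutoff functions, using the quadratic curvature decay of $g$ together with the per-time boundedness of $\Rm(g_{\varphi}(t))$. Near $t=0$ the metrics $g(t)$ and $g_{\varphi}(t)$ both collapse transversally along the exceptional set $E$ and the conical condition only furnishes local smooth convergence on $M\setminus E$; one must upgrade this to a statement about the integrated quantity $\mathcal{E}(s)$ as $s\to 0$. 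I expect this to require a pluripotential $L^{\infty}$ estimate for $\varphi(t)$ provided by the $A/t$-bounds on the Ricci and scalar curvatures (which control the right-hand side of the Monge-Amp\`ere equation) and by the smooth canonical model hypothesis on $(C_{0},g_{0})$, combined with the dimensional reduction supplied by the Reeb flow of $JX$ to control $\varphi(t)$ in a tubular neighborhood of $E$ by its boundary values. Carrying out this boundary analysis cleanly is, to my mind, the principal technical difficulty of the proof.
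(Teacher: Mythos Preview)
Your reduction to a scalar Monge--Amp\`ere equation and the propagation of the Killing symmetry match the paper. The energy scheme, however, does not close as stated: a Gr\"onwall inequality $\frac{d}{dt}\mathcal{E}\le\frac{C}{t}\mathcal{E}$ with $C>0$ yields only $\mathcal{E}(t)\le(t/s)^{C}\mathcal{E}(s)$, so to conclude you would need $\mathcal{E}(s)=o(s^{C})$ as $s\to 0^{+}$. The conical hypothesis gives only local smooth convergence on $M\setminus E$ and says nothing about $\varphi$ near $E$ as $t\to 0$, and no pluripotential or boundary-reduction argument will manufacture such a quantitative rate from that. This is a structural obstruction, not a technicality to be cleaned up.

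The paper's fix is to reverse the sign of the Gr\"onwall constant. It passes to self-similar variables $\tau=\log t$, $e^{\tau}\Phi_{e^{\tau}}^{*}\psi(\tau)=\varphi(e^{\tau})$, and builds the energy from $\dot\psi:=\partial_{\tau}\psi$ rather than from $\psi$: since $(\partial_{\tau}-\Delta_{\omega_{\psi}}-\tfrac{X}{2})\dot\psi=-\dot\psi$, the functional $A(\tau)=\int_{M}\dot\psi^{\,2k}e^{+f_{\psi}}\omega_{\psi}^{n}$ (note the weight is $e^{+f}$, the symmetrizing measure for the forward drift Laplacian $\Delta_{\omega_{\psi}}+\tfrac{X}{2}$, not your $e^{-f}$) satisfies $\partial_{\tau}A\le -2kA$. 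Hence it suffices that $A(\tau)$ be merely \emph{uniformly bounded} as $\tau\to-\infty$. That bound rests on two ingredients absent from your outline: (i) exponential decay $|\dot\psi|\le Ce^{-f}f^{-n-1}$ at spatial infinity, obtained by a maximum-principle comparison with the explicit supersolution $t f_{\varphi}^{-n-1}e^{-f_{\varphi}-\beta f_{\varphi}^{-1}}$ (without it the $e^{+f}$--weighted integral diverges); and (ii) time-uniform control of $\psi$, $\dot\psi$, $X\cdot\psi$ and the volume ratio on the compact sublevel set $\{f\le\lambda\}$, obtained by \emph{elliptic} maximum/minimum principles for combinations such as $\psi+\dot\psi$ and $\dot\psi+(A+1)\psi-f_{\psi}+cf$, where the hypotheses $\Ric(g_{\varphi})\le A t^{-1}g_{\varphi}$ and $R_{g_{\varphi}}\ge -At^{-1}$ are exactly what is used.
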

\begin{remark}\label{rmk}
   \begin{itemize}
   We notice that,  
   \item Theorem \ref{main} gives a partial answer to Question \ref{question of FIK} for a K\"ahler cone. We prove that there is only one way to desingularize the K\"ahler cone $(C_0,g_0)$ with K\"ahler-Ricci flow provided the above conditions hold (conical condition, cohomology condition, Killing condition and curvature condition), that is the self-similar forward Ricci flow associated to the given expanding gradient K\"ahler-Ricci soliton.
   \item The existence of the K\"ahler-Ricci soliton $(M,g,X)$ and K\"ahler resolution is guaranteed by \cite[Corollary B]{Conlon-Deruelle-Sun}. If $r$ denotes the radial function and $J_0$ denotes the complex structure of $(C_0,g_0)$, such resolution in Theorem \ref{main} satisfies (see \cite[Theorem A]{Conlon-Deruelle-Sun}):
   \begin{enumerate}
   \item $\pi_*J=J_0$, where $J$ is the complex structure of $M$,
       \item $d\pi(X)=r\partial_r$ and $|(\nabla^{g_0})^k(\pi_*g-g_0)|_{g_0}=O(r^{-2-k})$ for all $k\in \N_0$,
       \item the torus action on $C_0$ generated by the Reeb vector field $J_0(r\partial_r)$ extends to a holomorphic isometric action on $(M,J,g)$.
   \end{enumerate}
   \item Since $M$ admits an expanding K\"ahler-Ricci soliton, we conclude that $c_1(M)<0$. Moreover, let $w_1(t)$ be a general solution to K\"ahler-Ricci flow with conical condition satisfying
    \begin{equation*}
            \frac{\partial}{\partial t}w_1(t)=-\rho_{w_1(t)},
    \end{equation*}
    we also have
    \begin{equation*}
           \frac{\partial}{\partial t}w(t)=-\rho_{w(t)}.
    \end{equation*}
    Here $\rho_{w_1(t)}$(resp. $\rho_{w(t)}$) denotes the Ricci form with respect to $w_1(t)$(resp. $w(t)$). In K\"ahler manifold $M$, we have $[\rho_{w(t)}]=[\rho_{w_1(t)}]=c_1(M)\in H^{1,1}(M,\Z)$, therefore $\frac{\partial}{\partial t}[w_1(t)-w(t)]=0$, the cohomology class of $w_1(t)-w(t)$ is a constant. Since $w_1(t)$ satisfies the conical condition, $\pi_*(w_1(0)-w(0))=0$, in particular it is exact, which implies that $\pi_*(w_1(t)-w(t))$ is exact in $C_0-\{\textrm{vertex}\}$ for all $t$. But in general we cannot claim that $w_1(t)-w(t)$ is globally exact on $M$ without assuming some topological properties of the exceptional set $E$.
    \item We have already seen that on a compact complex manifold, the K\"ahler-Ricci flow equation can always be reduced to a complex Monge-Ampère equation. However, in the non-compact case, this correspondence does not always hold. In Section  \ref{Normalization of problem}, we will prove (see Lemma \ref{Killing condition}) that the vector field $JX$ is Killing for $g_\varphi(t)$ for all $t\in(0,T)$ by using the uniqueness theorem of Chen-Zhu \cite{Chen-Zhu} and backward uniqueness theorem of Kotschwar \cite{Kotschwar3}. Moreover, this Killing vector field plays a crucial role in normalizing the problem, allowing us to reduce the Kähler-Ricci flow equation to a complex Monge-Amp\`ere equation as desired.
    \item In Theorem \ref{main}, we do not assume that the entire Riemann curvature tensor is bounded by $ \frac{A}{t} $. In Section \ref{Ricci flow coming out of cone}, we will see that having a bound on the full Riemann curvature operator at each time allows us to apply Perelman's pseudolocality theorem, which in turn implies that the full Riemann curvature operator and its derivatives decay quadratically. This ensures that the full Riemann curvature operator remains bounded outside a compact set---a sub-level set of soliton potential. Furthermore, we will see that the $ \frac{A}{t} $ bound on the Ricci curvature and scalar curvature plays a crucial role in obtaining estimates within such compact region.
   \end{itemize}
\end{remark}
 \subsection{Strategy of proof}

Recall that $g_\varphi(t)_{t\in (0,T)}$ and $g(t)_{t\in\mathbb{R}_+}$ are two solutions to the K\"ahler-Ricci flow starting with the same conical initial data. The strategy of this paper is to analyze the \emph{normalized Kähler-Ricci flow} and investigate whether it is a static flow.

The normalized Kähler-Ricci flow $ g_\psi(\tau)_{\tau\in (-\infty,\log T)} $ is defined through the relation
\begin{equation*}
    e^\tau \Phi_{e^\tau}^* g_\psi(\tau) = g_\varphi(e^\tau).
\end{equation*}
The evolution equation for $ g_\psi(\tau) $ then becomes
\begin{equation*}
    \partial_\tau g_\psi(\tau) = \mathcal{L}_{\frac{X}{2}} g_\psi(\tau) - \mathrm{Ric}(g_\psi(\tau)) - g_\psi(\tau).
\end{equation*}

This geometric flow equation can be further reduced to a \emph{complex Monge-Ampère equation} for a scalar potential function $\psi(\tau)$, given by:
\begin{equation*}
    \partial_\tau \psi(\tau) = \log\left(\frac{w_\psi(\tau)^n}{w^n}\right) + \frac{X}{2} \cdot \psi(\tau) - \psi(\tau),
\end{equation*}
where $w_\psi(\tau)^n$(resp. $w^n$) denotes the volume form associated with the Kähler metric $g_\psi(\tau)$(resp $g$).

Taking the time derivative $\dot{\psi} := \partial_\tau \psi$, we obtain the following linear parabolic evolution equation:
\begin{equation*}
    \left( \partial_\tau - \Delta_{w_\psi} - \frac{X}{2} \right) \dot{\psi} = -\dot{\psi}.
\end{equation*}

From this, if $f$ denotes the normalized soliton potential with respect to K\"ahler-Ricci soliton $g$, we construct a crucial energy functional:
\begin{equation*}
    A(\tau) := \int_M \left(\dot{\psi}(\tau)\right)^{2k} e^{f + \frac{X}{2} \cdot \psi(\tau)} w_\psi(\tau)^n,
\end{equation*}
for an integer $ k$ sufficiently large. The central result is to show that $A(\tau) \equiv 0$, from which we conclude that the normalized Kähler-Ricci flow is a static flow.

The main technical challenge lies in establishing the \textbf{well-definedness} and \textbf{boundedness} of the energy $A(\tau)$.

In A. Deruelle and F. Schulze's paper \cite[Theorem 1.1]{Deruelle-Schulze}, they proved that for two asymptotically conical soliton metrics $g_1$, $g_2$ whose soliton vector fields coincide outside of a compact set, then $\lim_{r\to+\infty}r^ne^{\frac{r^2}{4}}(g_1-g_2)$ exists and is in $L^2_{loc}(C(S))$ topology, where $C(S)$ is the asymptotic Riemannian cone with radial function $r$. Thus, here we have the reason to believe that $|\partial\bar{\partial}\psi|_{g_0}=O(e^{-f}f^{-n})$. To ensure $ A(\tau) $ is well-defined, we analyze the asymptotic behavior of $\dot{\psi}$. The evolution equation implies that
\begin{equation*}
    \dot{\psi} =O\left(e^{-f} f^{-n-1}\right)
\end{equation*}
at spatial infinity, with the decay exponent $n+1$ being nearly optimal. This decay ensures that, for sufficiently large $k$, the integrand in $A(\tau)$ decays rapidly enough to guarantee convergence outside of a sub-level set of $f$. Thus, the energy $A(\tau)$ is well-defined.

The remaining difficulty is to obtain uniform estimates for $\dot{\psi}$ and the volume form $w_\psi(\tau)^n$ on this sub-level set, where the decay at infinity no longer helps the analysis. To overcome this, we employ the maximum principle, using carefully chosen auxiliary functions and boundary data to control the behavior of $\dot{\psi}$ and the weighted volume ratio $e^{\frac{1}{2}X\cdot\psi}\frac{w_\psi(\tau)^n}{w^n}$ in this compact set, it turns out that the weighted volume ratio is a subsolution of the backward drift Laplacian $\Delta_{w_\psi}-\frac{X}{2}$. This allows us to extend the boundedness of the energy to the entire manifold, completing the proof that $A(\tau) \equiv 0$, and hence demonstrating the triviality of the normalized K\"ahler-Ricci flow.
\subsection{Outline of Paper} In Section \ref{Properties}, we will discuss the result of \cite{Conlon-Deruelle-Sun} and some fundamental properties of K\"ahler-Ricci solitons, including soliton identities (Lemma \ref{soliton equalities}) and a lower bound for the scalar curvature (Lemma \ref{lower bound of scalar curvature}).  

In Section \ref{Ricci flow coming out of cone}, we will examine the general Ricci flow with a Riemannian conical metric as its initial data. Applying Perelman's pseudolocality theorem, we derive results that appeared in Siepmann’s PhD thesis \cite{Siepmann}, showing that the full Riemann curvature operator and its derivatives decay quadratically. As a consequence, we obtain a rough estimate for the Taylor expansion of such a Ricci flow (Proposition \ref{Taylor's expansion}).  

In Section \ref{Normalization of problem}, with the help of the Killing vector field $JX$, we reduce the K\"ahler-Ricci flow described in Theorem \ref{main} to a complex Monge-Amp\`ere equation for $\varphi$.  

In Section \ref{A priori estimate at spatial infinity}, we establish the polynomial decay at spatial infinity for several key quantities, utilizing our Taylor expansion formula (Proposition \ref{Taylor's expansion}). Furthermore, in section \ref{section max principle}, we apply the maximum principle to obtain exponential decay estimates.  

In Section \ref{soliton obstruction fl}, we use the flow generated by $ X $ to pull back the Ricci flow, leading to the \emph{normalized K\"ahler-Ricci flow} \eqref{obstruction flow}. By utilizing the Ricci curvature bound, we obtain time independent estimates in the compact set of $ M $ containing $ E $. Finally, we apply an energy method to show that the normalized K\"ahler-Ricci flow is a static flow, which ultimately implies the uniqueness theorem.
\section{Properties of expanding gradient K\"ahler-Ricci soliton}\label{Properties}
In the paper \cite{Conlon-Deruelle-Sun}, it is shown that a K\"ahler cone appears as the tangent cone at infinity of
a complete expanding gradient K\"ahler-Ricci soliton with quadratic curvature decay with derivatives if and only if it has a smooth canonical model (on which the soliton lives). The theorem \cite[Corollary B]{Conlon-Deruelle-Sun} states as:

\begin{theorem}[Strong uniqueness for expanders]\label{Theorem of CDS}
Let $(C_{0},\,g_{0})$ be a K\"ahler cone of complex dimension $n\geq2$ with radial function $r$.
Then there exists a unique (up to pullback by biholomorphisms)
complete expanding gradient K\"ahler-Ricci soliton $(M,\,g,\,X)$
whose curvature $\operatorname{Rm}(g)$ satisfies
\begin{equation*}
A_{k}(g):=\sup_{x\in M}|(\nabla^{g})^{k}\operatorname{Rm}(g)|_{g}(x)d_{g}(p,\,x)^{2+k}<\infty\quad\textrm{for all $k\in\mathbb{N}_{0}$},
\end{equation*}
where $d_{g}(p,\,\cdot)$ denotes the distance to a fixed point $p\in M$ with respect to $g$, with tangent cone at infinity
$(C_{0},\,g_{0})$ if and only if $C_{0}$ has a smooth canonical model. When this is the case,
\begin{enumerate}
  \item $M$ is the smooth canonical model of $C_{0}$, and
  \item there exists a resolution map $\pi:M\to C_{0}$ with exceptional set $E$ such that $d\pi(X)=r\partial_{r}$ and
\begin{equation}\label{polynomial decay of soliton}
|(\nabla^{g_{0}})^k(\pi_{*}g-g_{0})|_{g_0} \leq C_{k}r^{-2-k}\quad\textrm{for all $k\in\mathbb{N}_{0}$}.
\end{equation}
\end{enumerate}
\end{theorem}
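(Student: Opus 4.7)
\medskip

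The statement combines three independent assertions: necessity (existence of the soliton forces $C_0$ to admit a smooth canonical model, which is $M$); sufficiency (every Kähler cone with a smooth canonical model carries such a soliton); uniqueness up to biholomorphism, together with the resolution properties $d\pi(X)=r\partial_r$ and the decay \eqref{polynomial decay of soliton}. My plan is to treat these three assertions in turn.

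For necessity, I would start from the standard soliton identity
\begin{equation*}
  |\nabla^{g}f|_{g}^{2}+R_{g}-2f \equiv \mathrm{const},
\end{equation*}
and combine it with the quadratic decay of $R_g$ coming from $A_0(g)<\infty$ to show that $f$ is comparable to $d_g(p,\cdot)^2/4$ at infinity. Consequently the sublevel sets $\{f\le c\}$ are a compact exhaustion of $M$, and the one-parameter family $\Phi_t=\Phi^X_{-\tfrac12\log t}$ produces the self-similar Kähler-Ricci flow $g(t)=t\Phi_t^* g$ that, together with the curvature decay, converges to $(C_0,g_0)$ as $t\to 0^+$. The holomorphic vector field $Z=X-iJX$ generates a $\mathbb{C}^*$-action on $M$ extending the dilation action of the cone. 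I would then study the algebra of $Z$-homogeneous plurianticanonical sections on $M$: using a weighted Hörmander $L^2$-estimate with weight $e^{-f}$ (the soliton equation exactly provides the needed positivity), this algebra is finitely generated and identifies $\mathrm{Proj}$ of it with the canonical model of $C_0$. Smoothness is inherited from $M$.

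For sufficiency, fix a background Kähler form $\omega_{\mathrm{bg}}$ on $M$ that equals $\pi^*\omega_0$ outside a compact neighbourhood of $E$. Writing $\omega=\omega_{\mathrm{bg}}+i\partial\bar\partial\phi$, the soliton equation $\mathrm{Ric}(\omega)+\omega=\tfrac12\mathcal{L}_X\omega$ reduces to a complex Monge-Ampère equation of the form
\begin{equation*}
  (\omega_{\mathrm{bg}}+i\partial\bar\partial\phi)^n = e^{F-\phi+\tfrac12 X\cdot\phi}\,\omega_{\mathrm{bg}}^n
\end{equation*}
for a specific $F$ determined by the asymptotics. I would solve this by a continuity method in weighted Hölder spaces $C^{k,\alpha}_{-\delta}$ modelled on the cone. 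Openness is provided by Fredholm theory for the linearization $\Delta_\omega+\tfrac12 X\cdot-\mathrm{Id}$, whose invertibility on weighted spaces follows from the favorable sign of the drift term. Closedness decomposes into a $C^0$ estimate via the weighted maximum principle (crucially using the $-\phi$ term on the right-hand side), a $C^2$ estimate of Yau-Aubin type adapted to weighted norms, and higher regularity by Schauder theory. The resolution properties $d\pi(X)=r\partial_r$ and the decay \eqref{polynomial decay of soliton} then follow by a bootstrap from the soliton equation, once the leading asymptotics of $\phi$ are identified.

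Uniqueness is the most delicate piece. Suppose two such solitons exist with the same tangent cone. The Reeb vector field $JX$ is then a Killing field for both (it is essentially determined by the cone data; the argument mirrors Lemma \ref{Killing condition}, via Chen-Zhu uniqueness and Kotschwar backward uniqueness). After identifying the two Kähler classes using the $\mathbb{C}^*$-equivariance, the difference of potentials satisfies a weighted elliptic equation, and the sharp asymptotic comparison $g_1-g_2=O(e^{-f}f^{-n})$ of Deruelle-Schulze lets the weighted maximum principle conclude that the difference vanishes. The main obstacle I anticipate is the closedness step in sufficiency: the $C^2$ estimate of Yau type must interact cleanly with the weighted norm and the drift $X\cdot\phi$, and the decay rate of $\phi$ at infinity has to be preserved under iteration—essentially the technical heart of the Conlon-Deruelle-Sun analysis.
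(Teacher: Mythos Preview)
The paper does not prove this theorem. Theorem \ref{Theorem of CDS} is quoted verbatim as \cite[Corollary B]{Conlon-Deruelle-Sun} and is used throughout as a black box: its role in the present paper is only to guarantee that the soliton $(M,g,X)$ and the resolution $\pi$ with the decay \eqref{polynomial decay of soliton} exist, so that the self-similar flow $g(t)=t\Phi_t^*g$ and the competitor $g_\varphi(t)$ genuinely come out of the same cone. Everything from Section \ref{Ricci flow coming out of cone} onward takes this as given.

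Your outline is a plausible high-level summary of the strategy in \cite{Conlon-Deruelle-Sun} and \cite{Conlon-Deruelle}, but it is not something you need to reproduce here, and several of the steps you sketch (the weighted H\"ormander argument identifying $M$ with the canonical model, the continuity method in weighted H\"older spaces, the elliptic uniqueness via Deruelle--Schulze asymptotics) are themselves substantial theorems whose details are not in the present paper at all. In particular, your uniqueness paragraph conflates two different mechanisms: the uniqueness in \cite{Conlon-Deruelle-Sun} is an elliptic statement about two \emph{soliton} metrics, while Lemma \ref{Killing condition} and the Chen--Zhu/Kotschwar input in this paper concern the parabolic uniqueness of a general K\"ahler--Ricci flow against the soliton flow. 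For the purposes of this paper you should simply cite \cite[Corollary B]{Conlon-Deruelle-Sun} and move on.
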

Since every Ricci soliton is related to a self-similar solution to Ricci flow, let $g(t)_{t\in (0,\infty)}$ be the forward solution to K\"ahler-Ricci flow associated to $g$. Theorem \ref{Theorem of CDS} tells us when $t$ goes to $0^+$, outside the exceptional set $E$, $\pi_*g(t)$ converges to $g_0$ locally smoothly when $t$ tends to 0. This is due to $t\Phi_t^*g_0=g_0$, and by \eqref{polynomial decay of soliton},
\begin{equation*}
    |(\nabla^{g_0})^k(\pi_*g(t)-g_0)|_{g_0}\le C_k\frac{t}{r^{2+k}},\quad \textrm{for all $k\in\N_0$}.
\end{equation*}

\textbf{Therefore $g(t)$ and $g_\varphi(t)$ as in Theorem \ref{main} are two solutions to K\"ahler-Ricci flow starting with the same K\"ahler cone. }

Let $(M,g,X)$ be the gradient expanding K\"ahler-Ricci soliton stated in Theorem \ref{Theorem of CDS}, let $w$ be the K\"ahler form of $g$ and let $f\in C^\infty(M;\R)$ be a Hamiltonian potential of $X$ such that $\nabla^g f=X$. 
\begin{lemma}[Soliton identities]\label{soliton equalities}
    \begin{equation}
        \begin{split}
            &\Delta_w f=n+R_w,\\
            &\nabla^g R_w+\Ric(g)(X)=0,\\
            &|\partial f|_g^2+R_w=f+\textrm{constant}.
        \end{split}
    \end{equation}
    Here $n=\dim_\C M$, $\Delta_w$ is the K\"ahler Laplacian, $R_w=\frac{1}{2}R_g$ is the K\"ahler scalar curvature, $R_g$ is the scalar curvature of $g$.
\end{lemma}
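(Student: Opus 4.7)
The three identities are classical consequences of the soliton equation, and my plan is to derive them in order, since each relies on the previous ones.

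The starting point is the gradient expanding Kähler–Ricci soliton equation in Riemannian form, $\operatorname{Hess}_g f = \operatorname{Ric}(g) + g$, which is just \eqref{soliton1} with $\lambda=-1$ and $X=\nabla^g f$. Taking the $g$-trace of both sides gives $\Delta_g f = R_g + 2n$. Since $f$ is real-valued and $g$ is Kähler, the Kähler Laplacian and the Riemannian Laplacian are related by $\Delta_w = \tfrac12 \Delta_g$ on functions, and similarly $R_w = \tfrac12 R_g$. Dividing by $2$ yields the first identity $\Delta_w f = n + R_w$.

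For the second identity, I would take the Riemannian divergence of the soliton equation. On the left, the standard Ricci commutation formula gives $\nabla^j \nabla_j \nabla_i f = \nabla_i \Delta_g f + R_{ij}\nabla^j f$, which using the first identity equals $\nabla_i R_g + \operatorname{Ric}(X)_i$. On the right, the twice-contracted second Bianchi identity gives $\operatorname{div}(\operatorname{Ric})_i = \tfrac12 \nabla_i R_g$, while $\operatorname{div}(g)=0$. Equating these and simplifying produces $\tfrac12 \nabla_i R_g + \operatorname{Ric}(X)_i = 0$, i.e.\ $\nabla^g R_w + \operatorname{Ric}(g)(X) = 0$ after rewriting in terms of $R_w = \tfrac12 R_g$.

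The third identity follows by integrating a gradient. Contracting the soliton equation with $\nabla f$ gives $\tfrac12 \nabla_i (|\nabla f|^2_g) = (\operatorname{Hess}_g f)(\nabla f)_i = \operatorname{Ric}(X)_i + \nabla_i f$. Using the second identity to replace $\operatorname{Ric}(X)$ by $-\tfrac12 \nabla R_g = -\nabla R_w$, one finds $\nabla_i\bigl(\tfrac12|\nabla f|^2_g + R_w - f\bigr) = 0$. Since $M$ is connected and, for a real function on a Kähler manifold, $|\partial f|^2_g = \tfrac12|\nabla f|^2_g$, this rearranges to $|\partial f|^2_g + R_w = f + \mathrm{const}$, as claimed.

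None of these steps presents a real obstacle; the only thing to be careful about is bookkeeping of the Kähler versus Riemannian conventions (the factors of $\tfrac12$ between $\Delta_w$ and $\Delta_g$, between $R_w$ and $R_g$, and between $|\partial f|^2_g$ and $|\nabla f|^2_g$). Once those are fixed, the whole lemma reduces to tracing the soliton equation, commuting covariant derivatives via Ricci, and applying the second Bianchi identity.
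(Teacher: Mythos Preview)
Your proof is correct and follows exactly the standard route the paper has in mind; the paper itself gives no details beyond ``the proof is standard (see \cite[Section~2 of Chapter~1]{Chow}) since $\Ric(g)+g=\partial\bar\partial f$'', so your write-up simply unpacks that reference with the appropriate K\"ahler-versus-Riemannian factor bookkeeping.
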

\begin{proof}
    The proof is standard (see \cite[Section 2 of Chapter 1]{Chow}) since $\Ric(g)+g=\partial\bar\partial f$.
\end{proof}
\textbf{From now on, we normalize $f$ such that $|\partial f|_g^2+R_w=f$}.
\begin{lemma}\label{lower bound of scalar curvature}
    There exists a constant $\varepsilon>0$ such that $R_w\ge-n+\varepsilon$ on $M$.
\end{lemma}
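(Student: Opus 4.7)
The plan is to derive an elliptic differential inequality satisfied by $F := R_w + n$ and then apply E.~Hopf's strong minimum principle, exploiting the fact that the quadratic curvature decay hypothesis on $g$ forces $R_w \to 0$ at spatial infinity.

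First I would take the Riemannian divergence of the vector identity $\Ric(g)(X) + \nabla^g R_w = 0$ from Lemma \ref{soliton equalities}. Expanding the divergence on the left using the contracted second Bianchi identity $\Div(\Ric(g)) = dR_w$ together with the soliton equation $\operatorname{Hess}_g(f) = \Ric(g) + g$, which gives $\nabla_i X_j = R_{ij} + g_{ij}$, a short computation yields the pointwise elliptic identity
\begin{equation*}
2\Delta_w R_w + X \cdot R_w + 2R_w + |\Ric(g)|_g^2 = 0.
\end{equation*}
Cauchy-Schwarz applied to the $n$ real eigenvalues of the Hermitian form $\Ric(g)$, together with the normalization $|\Ric(g)|_g^2 = 2\sum_i \lambda_i^2$, yields $|\Ric(g)|_g^2 \geq \tfrac{2}{n} R_w^2$. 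Substituting this bound into the previous identity and rewriting everything in terms of $F$ produces the differential inequality
\begin{equation*}
\Delta_w F + \tfrac{1}{2}\, X \cdot F \leq \tfrac{1}{n}\, F(n - F).
\end{equation*}

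The quadratic decay of $\Rm(g)$ ensures that $R_w$ tends to zero, and hence $F$ tends to $n$, at spatial infinity. If $\inf_M F \geq n$, the lemma is immediate; otherwise every sublevel set $\{F \leq a\}$ with $a < n$ is compact, so $F$ attains its infimum at some interior point $p \in M$. At $p$, $\nabla F = 0$ and $\Delta_w F \geq 0$, so the preceding inequality forces $F(p)(n - F(p)) \geq 0$; combined with $F(p) < n$ this yields $F(p) \geq 0$.

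The main obstacle is ruling out the borderline case $F(p) = 0$. For this I would view the inequality as $LF \leq 0$ for the linear second-order elliptic operator $L := \Delta_w + \tfrac{1}{2}\, X\cdot + c(x)$ with $c(x) := (F(x) - n)/n$. On any connected open neighborhood $U$ of $p$ on which $F < n$, the coefficient $c$ is strictly negative and $F \geq 0 = F(p)$, so E.~Hopf's strong minimum principle forces $F$ to be identically zero throughout $U$. Thus the set $\{F = 0\}$ is open (by this local argument) and closed (by continuity of $F$); since it is nonempty and $M$ is connected, it must coincide with $M$, in contradiction with $F \to n$ at infinity. Therefore $F(p) > 0$, and the lemma holds with $\varepsilon := F(p)$.
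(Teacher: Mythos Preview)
Your proof is correct and follows essentially the same strategy as the paper: derive an elliptic identity for $R_w$ from the soliton equations, use the quadratic curvature decay to locate an interior minimum, and invoke Hopf's strong minimum principle to rule out $R_w+n=0$. The only cosmetic differences are that the paper obtains the identity via the Bochner formula on $|\nabla f|^2$ and keeps the exact term $|\Ric(g)+g|^2_g$ rather than passing through the Cauchy--Schwarz inequality $|\Ric(g)|_g^2\ge \tfrac{2}{n}R_w^2$; this lets the paper conclude directly that a zero of $R_w+n$ would force $\Ric(g)+g\equiv 0$, but the contradiction is the same as yours.
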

\begin{proof}
    We use $\Delta_g$ to denote the real Laplacian-Beltrami operator. Then due to Bochner's formula
    \begin{equation*}
        \frac{1}{2}\Delta_g|\nabla f|_{g}^2=|\nabla^{g,2}f|^2_g+\Ric(g)(\nabla^g f,\nabla^g f)+g(\nabla^g\Delta_g f,\nabla^g f).
    \end{equation*}
    By Lemma \ref{soliton equalities}     \begin{equation*}
        \begin{split}
            &\frac{1}{2}\Delta_g|\nabla f|_{g}^2=\Delta_g|\partial f|^2_g=\Delta_g(f-R_w);\\
            &\Ric(g)(\nabla^g f,\nabla^g f)=-X\cdot R_w;\\
            &g(\nabla^g\Delta_g f,\nabla^g f)=2X\cdot R_w,
        \end{split}
    \end{equation*}
    and $\nabla^{g,2}f=g+\Ric(g)$, hence we have
    \begin{equation*}
        |\Ric(g)+g|^2_g+X\cdot R_w=\Delta_g(f-R_w)=n+R_w-\Delta_g R_w.
    \end{equation*}
    Thus we have established an elliptic equation for $R_w+n$:
    \begin{equation*}
        \Delta_g(R_w+n)+X\cdot(R_w+n)=R_w+n-|\Ric(g)+g|^2_g.
    \end{equation*}
    Since $R_w$ decays quadratically at infinity, the weak maximum principle implies that $R_w+n\ge0$ on $M$. Moreover, as $\Delta_g(R_w+n)+X\cdot(R_w+n)-R_w+n\le 0$, Hopf's maximum principle \cite[Theorem 3.71]{T.Aubin} ensures that if there exists $x_0\in M$ such that $R_w(x_0)+n=0$, it immediately yields $R_w\equiv -n$, and therefore $\Ric(g)+g\equiv 0$, this gives us a contradiction since $|\Rm(g)|_g$ tends to $0$ at spacial infinity by assumption. Hence $R_w+n>0$ on $M$.

    As $R_w$ decays to 0 at spatial infinity, there exists a constant $\varepsilon>0$ such that $R_w\ge -n+\varepsilon$.
\end{proof}
\begin{corollary}\label{subharmonic function}
    The normalized Hamiltonian potential $f$ is a strictly subharmonic function on $M$.
\end{corollary}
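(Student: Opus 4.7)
The plan is to combine the two preceding results directly. By the first soliton identity in Lemma \ref{soliton equalities}, we have
\begin{equation*}
    \Delta_w f = n + R_w
\end{equation*}
on $M$. By Lemma \ref{lower bound of scalar curvature}, there exists a constant $\varepsilon > 0$ such that $R_w \geq -n + \varepsilon$ pointwise on $M$. Substituting this lower bound into the soliton identity immediately gives
\begin{equation*}
    \Delta_w f \geq \varepsilon > 0
\end{equation*}
everywhere on $M$.

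Since $\Delta_w$ is the K\"ahler Laplacian (which agrees up to the factor $\tfrac{1}{2}$ with the real Laplace-Beltrami operator $\Delta_g$ when acting on real-valued functions), strict positivity of $\Delta_w f$ is equivalent to strict positivity of $\Delta_g f$. Therefore $f$ is strictly subharmonic on $M$, and there is no substantive obstacle beyond quoting the two lemmas. The only point worth noting is that the strict inequality $R_w > -n$ established in the proof of Lemma \ref{lower bound of scalar curvature} (via Hopf's maximum principle together with the curvature decay assumption, which rules out $\mathrm{Ric}(g) + g \equiv 0$) is precisely what upgrades subharmonicity to \emph{strict} subharmonicity; this is the only place where the non-triviality of the soliton is being used.
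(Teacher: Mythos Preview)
Your proof is correct and follows exactly the same approach as the paper: combine the soliton identity $\Delta_w f = n + R_w$ from Lemma~\ref{soliton equalities} with the bound $R_w \ge -n + \varepsilon$ from Lemma~\ref{lower bound of scalar curvature} to conclude $\Delta_w f \ge \varepsilon > 0$. The additional commentary you provide (on the relation between $\Delta_w$ and $\Delta_g$, and on the role of Hopf's maximum principle) is accurate but not needed for the argument.
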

\begin{proof}
    By Lemma \ref{soliton equalities} and \ref{lower bound of scalar curvature}, $\Delta_wf=R_w+n\ge\varepsilon>0$, which implies that $f$ is a strictly subharmonic function on $M$ as expected.
\end{proof}
\section{Ricci flow coming out of Riemannian cone}\label{Ricci flow coming out of cone}
Let $(C(S),g_C)$ be a Riemannian cone over a closed smooth Riemannian manifold $(S,g_S)$ such that $C(S)=\R_+\times S, g_C=dr^2+r^2g_S$, where $r$ is the radial function of cone $C(S)$.
\begin{definition}[Ricci flow coming out of cone]\label{Def ricci flow coming out of cone}
   Assume that $M$ is an $n-$dimensional smooth non compact manifold, and $(g(t))_{t\in (0,T)}$ is a family of smooth complete Riemannian metrics on $M$, we say that $g(t)$ is a Ricci flow coming out of the Riemannian cone $(C(S),g_C)$ if the following conditions hold:
   \begin{enumerate}
       \item The family $(g(t))_{t\in (0,T)}$ satisfies the Ricci flow equation: $\frac{\partial}{\partial t} g(t)=-2\Ric(g(t)),\quad t\in (0,T)$;
        \item there exists a diffeomorphism $\pi: M\backslash K\mapsto C(S)\backslash\{o\}$, where $K$ is a compact subset of $M$, $\{o\}$ is the vertex of $C(S)$;
        \item $\pi_*g(t)$ converges locally smoothly to $g_C$ away form the vertex $o$ when $t$ converges to $0$;
        \item for each $t\in (0,T)$ the Riemannian curvature of $g(t)$ is bounded on $M$.
   \end{enumerate}
\end{definition}
For convenience, we identify $M\backslash K $ with $C(S)-\{o\}$ via the diffeomorphism $\pi$.
For a Ricci flow coming out of the Riemannian cone, we have quadratic decays for the full Riemann curvature tensor and its derivatives.
\begin{theorem}\cite[Theorem 3.2.3]{Siepmann}\label{Siepmann}
  Let $(M^n,g(t))_{t\in(0,T)}$ be a complete Ricci flow coming out of Riemannian cone $(C(S),g_C)$. Then $\Rm(g(t))$ decays quadratically, i.e. there exist constants $\{C_k>0\}_{k\in\N_0},\lambda>0$ independent of time such that
     \begin{equation}\label{quadratic decay}
         |(\nabla^{g(t)})^k\Rm(g(t))|_{g(t)}(x)\le \frac{C_k}{r(x)^{2+k}},\quad\textrm{for all $k\in\mathbb{N}_{0}$},
     \end{equation}
      for all $(x,t)\in M\times (0,T)$ such that $r(x)^2> \lambda t$.
\end{theorem}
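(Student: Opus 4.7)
The strategy is parabolic rescaling to reduce to unit scale, then Perelman's pseudolocality theorem for the curvature bound, and finally Shi's interior derivative estimates for the higher-order bounds. Fix $x_0 \in M \setminus K$ with $r_0 := r(x_0) \gg 1$, and parabolically rescale by $r_0$, setting $\tilde{g}(s) := r_0^{-2} g(r_0^2 s)$ for $s \in (0, r_0^{-2} T)$. Since $(C(S), g_C)$ is invariant under the dilation $r \mapsto r_0^{-1} r$ of the cone, the family $\tilde{g}(s)$ is again a Ricci flow coming out of the very same cone $(C(S), g_C)$, but now with the base point $x_0$ sitting at cone distance $1$ from the vertex. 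It therefore suffices to prove the estimates uniformly in $x_0$ at the fixed scale $r_0 = 1$ for $t \in (0, \lambda^{-1}]$ with a universal $\lambda > 0$.

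For the curvature bound itself I would verify the hypotheses of pseudolocality on the initial slice. Fix a small $\rho > 0$ depending only on the link $(S, g_S)$: the ball $B_{g_C}(x_0, \rho)$ on the cone lies inside $C(S) \setminus \{o\}$, has smooth metric, and is $C^k$-close to a Euclidean ball with constants depending only on $n$ and $(S, g_S)$; in particular its isoperimetric constant is within an arbitrarily small prescribed $\delta > 0$ of the Euclidean one. By the locally smooth convergence $\tilde{g}(s) \to g_C$ on $C(S) \setminus \{o\}$ as $s \to 0^+$, the same quantitative bounds persist on $(M, \tilde{g}(s_0))$ for all sufficiently small $s_0 > 0$, with constants independent of $x_0$ and $s_0$. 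Perelman's pseudolocality theorem then supplies universal $\epsilon, C_0 > 0$ such that
\begin{equation*}
    |\Rm(\tilde{g}(s))|_{\tilde{g}(s)}(x_0) \le \frac{C_0}{\epsilon^2}, \qquad s \in (s_0, s_0 + \epsilon^2].
\end{equation*}
Letting $s_0 \to 0^+$ and unravelling the rescaling yields $|\Rm(g(t))|_{g(t)}(x_0) \le C/r_0^2$ for all $t$ with $r_0^2 > \epsilon^{-2} t$, so taking $\lambda := \epsilon^{-2}$ proves the case $k = 0$. For $k \ge 1$, Shi's interior derivative estimates applied on the rescaled flow over a slightly shrunken parabolic cylinder on which $|\Rm(\tilde{g})|_{\tilde{g}}$ is already bounded by $C_0/\epsilon^2$ produce universal bounds $|(\nabla^{\tilde{g}})^k \Rm(\tilde{g})|_{\tilde{g}}(x_0) \le C_k$, which rescale to the claimed $|(\nabla^{g(t)})^k \Rm(g(t))|_{g(t)}(x_0) \le C_k / r_0^{2+k}$.

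The main obstacle I anticipate is justifying the \emph{uniformity} of the pseudolocality constants as $x_0$ ranges over $M \setminus K$ with $r_0 \to \infty$. In the rescaled picture this reduces to a uniform almost-Euclidean statement about small balls on the smooth conical manifold $(C(S), g_C)$ that avoid the vertex, a statement which holds with constants depending only on $n$ and the fixed compact link $(S, g_S)$. A secondary technical subtlety is that the initial cone data is only attained in the locally smooth sense away from the vertex, so one must take the limit $s_0 \to 0^+$ in the pseudolocality bound; this is harmless because in the rescaled picture the base point $x_0$ remains at definite cone distance $1$ from the vertex and pseudolocality is actually invoked at the positive time $s_0$ before passing to the limit.
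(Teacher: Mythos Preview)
Your proposal is correct and follows essentially the same route as the paper: verify almost-Euclidean geometry on cone balls (uniform in $x_0$ by scale invariance), transfer this to small positive times via the local smooth convergence, apply Perelman's pseudolocality starting from a time $s_0>0$ and let $s_0\to 0^+$, then invoke Shi-type interior estimates for the derivatives. The only cosmetic difference is that you make the scale invariance explicit via the parabolic rescaling $\tilde g(s)=r_0^{-2}g(r_0^2 s)$, whereas the paper works directly with balls of radius $\delta r(x_0)$ and uses the curvature--volume form of pseudolocality rather than the isoperimetric form you cite; both are equivalent formulations of the same argument.
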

\begin{proof}
    We only provide a sketch of proof here, for detailed proof, see \cite[Theorem 3.2.3]{Siepmann}.

     First, since $(C(S),g_C)$ is smooth outside the vertex, there exists a uniform constant $\frac{1}{2}>\delta>0$ such that 
      \begin{equation*}
          |\Rm(g_C)|_{g_C}(x)\le \frac{1}{(2\delta r(x_0))^2},\quad \forall x\in B_{g_C}(x_0,2\delta r(x_0)),
      \end{equation*}
      and
      \begin{equation*}
          \operatorname{Vol}_{g_C}(B_{g_C}(x_0,2\delta r(x_0))\ge (1-\epsilon)w_n(2\delta r(x_0))^{n},\quad \forall x_0\in C(S)-\{o\},
      \end{equation*}
 where $w_n$ denotes the volume of unit ball in $n$-dimensional Euclidean space and $\varepsilon$ denotes the universal constant given by Perelman's pseudolocality theorem (\cite[Theorem 10.1]{Perelman} in the compact case, \cite[Theorem 8.1]{Chau-Tam-Yu} in the non compact case).

 Now pick any point $x_0\in C(S)-\{o\}$ since $g(t)$ converges to $g_C$ locally smoothly, hence there exists a $0<s_0<T$ (which potentially depends on the choice of $x_0$) such that 
 \begin{equation*}
          |\Rm(g(s))|_{g(s)}(x)\le \frac{1}{(\delta r(x_0))^2},\quad \forall x\in B_{g(s)}(x_0,\delta r(x_0)),\quad s\le s_0.
      \end{equation*}
      And
      \begin{equation*}
          \operatorname{Vol}_{g(s)}\left(B_{g(s)}(x_0,\delta r(x_0))\right)\ge (1-\epsilon)w_n(\delta r(x_0))^{n}.
      \end{equation*}
      Then by applying Perelman's pseudolocality theorem on $g(s_0+t)_{t\ge0}$, it follows that 
      \begin{equation*}
          |\Rm(g(s_0+t))|_{g(s_0+t)}(x)\le \frac{1}{(\delta\varepsilon r(x_0)^2)}, 
      \end{equation*}
      for all $x\in B_{g(s_0+t)}(x_0,\delta\varepsilon r(x_0))$, $t\in [0,\min\{T-s_0,(\delta\varepsilon r(x_0)^2)\}]$, in particular
      \begin{equation*}
           |\Rm(g(s_0+t))|_{g(s_0+t)}(x_0)\le \frac{1}{(\delta\varepsilon r(x_0)^2)}, 
      \end{equation*}
      for $t\in[0,\min\{T-s_0,(\delta\varepsilon r(x_0)^2)\}]$. By the definition of $s_0$, there exists $\lambda>0$ independent of $x_0$ such that 
      \begin{equation*}
           |\Rm(g(t))|_{g(t)}(x_0)\le \frac{1}{(\delta\varepsilon r(x_0)^2)}, 
      \end{equation*}
      for $(x_0,t)\in M\times(0,T)$ such that $r(x_0)^2>\lambda t$. Estimates for higher covariant derivatives of the curvature
follow from \cite[Lemma A.4]{Topping}.
\end{proof}
\textbf{Define $\Omega_\lambda:=\{(x,t)\in M\backslash K\times (0,T)\ |\ r^2(x)>\lambda t \}$ as a parabolic set of space-time.} 
Given the above decay of the full Riemann curvature tensor,  the first observation is about equivalence of metrics.
\begin{lemma}\label{equivalence of metric} 
Let $(M^n,g(t))_{t\in(0,T)}$ be a complete Ricci flow coming out of Riemannian cone $(C(S),g_C)$. There exists constants $\lambda>0$ $A_0>1$independent of time such that for all $(x,t)\in\Omega_\lambda$
    \begin{equation}
        A_0^{-1}g_C(x)\le g(x,t)\le A_0g_C(x).
    \end{equation}
\end{lemma}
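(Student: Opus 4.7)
The plan is to convert the quadratic curvature decay from Theorem \ref{Siepmann} into a pointwise ODE inequality for $g(t)$ along the Ricci flow equation, then integrate in time and let $s\to 0^+$ using the convergence to the cone metric $g_C$.

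First, I would observe that Theorem \ref{Siepmann} with $k=0$, combined with the standard algebraic bound $|\Ric|_g\le C(n)|\Rm|_g$, yields a constant $C_0>0$ and a $\lambda>0$ (the one from that theorem, enlarged if necessary) such that
\begin{equation*}
|\Ric(g(s))|_{g(s)}(x)\le\frac{C_0}{r(x)^2}\qquad\text{for every }(x,s)\in\Omega_\lambda.
\end{equation*}
Together with $\partial_s g(s)=-2\Ric(g(s))$, this gives, for every tangent vector $v\in T_xM$, the differential inequality
\begin{equation*}
\bigl|\partial_s g(s)(v,v)\bigr|\le\frac{2C_0}{r(x)^2}\,g(s)(v,v).
\end{equation*}

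Next, I would fix $(x,t)\in\Omega_\lambda$ and note that for every $s\in(0,t]$ the point $(x,s)$ still lies in $\Omega_\lambda$, since $r(x)^2>\lambda t\ge\lambda s$. Applying Gronwall's inequality on $[s,t]$ and then letting $s\to 0^+$, using the smooth local convergence $\pi_*g(s)\to g_C$ from condition (iii) of Definition \ref{Def ricci flow coming out of cone} to pass to the limit at the fixed point $x$, I would obtain
\begin{equation*}
e^{-2C_0t/r(x)^2}\,g_C(v,v)\le g(t)(v,v)\le e^{2C_0t/r(x)^2}\,g_C(v,v).
\end{equation*}
Since $t/r(x)^2<1/\lambda$ on $\Omega_\lambda$, setting $A_0:=e^{2C_0/\lambda}$ concludes the proof.

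The only mildly delicate point is the justification of the $s\to 0^+$ limit: the curvature bound must hold uniformly for $s\in(0,t]$, which is ensured by the fact that $(x,s)$ remains in $\Omega_\lambda$ as $s$ decreases, and $g(s)(v,v)\to g_C(v,v)$ for each fixed $v\in T_xM$, which follows from (iii). No serious obstacle is expected; once Theorem \ref{Siepmann} is in hand, the metric equivalence is essentially a one-line application of Gronwall's inequality to the Ricci flow equation.
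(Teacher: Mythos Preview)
Your proposal is correct and follows essentially the same approach as the paper: both use the quadratic Ricci bound from Theorem~\ref{Siepmann}, note that $(x,s)\in\Omega_\lambda$ for all $s\in(0,t]$, integrate the Ricci flow equation (the paper writes $\log g(t)(V,V)=\int_0^t \partial_s\log g(s)(V,V)\,ds$ directly, which is just your Gronwall step), and set $A_0=\exp(C(n)C_0/\lambda)$. Your treatment of the $s\to 0^+$ limit is in fact slightly more explicit than the paper's, which simply writes the integral from $0$.
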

\begin{proof}
    For any $(x,t)\in\Omega_\lambda$, $(x,s)\in\Omega_\lambda$ for $0< s\le t$. Take a vector $V\in T_xM$ such that $g_C(x)(V,V)=1$, by \eqref{quadratic decay}
    \begin{equation}
        \begin{aligned}
            \log g(x,t)(V,V)&=\int_0^t\frac{-2\Ric(g(s))(V,V)(x)}{g(x,s)(V,V)}ds\\
            &\le C(n)C_0\int_0^t\frac{1}{r(x)^2}ds=C(n)C_0\frac{t}{r(x)^2}\le C(n)C_0\frac{1}{\lambda}.\\
        \end{aligned}
    \end{equation}
    Take $A_0=\exp{\{C(n)C_0\frac{1}{\lambda}}\}$ and we have $A_0^{-1}g_C(x)\le g(x,t)\le A_0g_C(x)$ as required.
\end{proof}
The equivalence of metrics would help us to get a rough estimate of Taylor's expansion of $g(t)$:
\begin{prop}[Taylor's expansion formula]\label{Taylor's expansion}
Let $(M^n,g(t))_{t\in(0,T)}$ be a complete Ricci flow coming out of Riemannian cone $(C(S),g_C)$. There exist constants $\{B_k>0\}_{k\in\N_0},\lambda>0$ independent of time such that for all $(x,t)\in\Omega_\lambda$
    \begin{equation}\label{Taylor expansion formula}
        \left|g(t)-\sum_{j=0}^k\frac{t^j}{j!}\left(\frac{\partial^j}{\partial t^j}g(t)\bigg|_{t=0}\right)\right|_{g_C}(x)\le B_{k+1}\frac{t^{k+1}}{r(x)^{2+2k}},\quad \textrm{for all $k\in\mathbb{N}_0$},
    \end{equation}
    \begin{equation}\label{gradient Taylor expansion}
        \left|\nabla^{g_C}\left(g(t)-\sum_{j=0}^k\frac{t^j}{j!}\bigg(\frac{\partial^j}{\partial t^j}g(t)\bigg|_{t=0}\bigg)\right)\right|_{g_C}(x)\le B_{k+1}\frac{t^{k+1}}{r(x)^{2+2k+1}}, \quad \textrm{for all $k\in\mathbb{N}_0$}.
    \end{equation}
\end{prop}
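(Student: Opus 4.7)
The plan is to derive the Taylor expansion by combining Taylor's theorem in the time variable with pointwise bounds on the higher time derivatives $\partial_t^j g$, both of which are made available by Theorem \ref{Siepmann} and Lemma \ref{equivalence of metric}. Specifically, I would first establish by induction on $j$ that, for each $j \geq 1$, the time derivative $\partial_t^j g(t)$ admits a universal polynomial representation
\begin{equation*}
\partial_t^j g = \sum_\alpha c_\alpha \, \underbrace{g^{-1} * \cdots * g^{-1}}_{\text{some contractions}} * \nabla^{k_{\alpha,1}}\Rm(g) * \cdots * \nabla^{k_{\alpha,m_\alpha}}\Rm(g),
\end{equation*}
where each multi-index satisfies the weight identity $\sum_{i=1}^{m_\alpha}(k_{\alpha,i}+2) = 2j$. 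The base case $j=1$ is the Ricci flow equation itself. The inductive step uses the standard evolution formulas $\partial_t\Rm = \Delta\Rm + \Rm*\Rm$ and $\partial_t\Gamma = \nabla\Rm$, each of which adds precisely $2$ to the weight.

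Next I would combine this structural identity with Theorem \ref{Siepmann} and Lemma \ref{equivalence of metric}: each factor $\nabla^{k_i}\Rm(g(t))$ has $g_C$-norm bounded by $C_{k_i}/r^{2+k_i}$ on $\Omega_\lambda$, so every term is bounded by $r^{-\sum(k_i+2)} = r^{-2j}$. This yields a time-uniform estimate $|\partial_t^j g(t)|_{g_C}(x) \leq D_j/r(x)^{2j}$ on $\Omega_\lambda$. Because $\pi_* g(t) \to g_C$ locally smoothly as $t\to 0^+$ by hypothesis, the polynomial expression passes to the limit and $\partial_t^j g\big|_{t=0}$ exists on $M\setminus K$ with the same bound. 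For $(x,t)\in\Omega_\lambda$ the entire segment $\{(x,s):0\le s\le t\}$ lies in $\overline{\Omega}_\lambda$, so Taylor's theorem with integral remainder gives
\begin{equation*}
g(t) - \sum_{j=0}^{k}\frac{t^j}{j!}\partial_t^j g\big|_{t=0} = \int_0^t \frac{(t-s)^k}{k!}\,\partial_s^{k+1}g(s)\,ds,
\end{equation*}
and taking $|\cdot|_{g_C}$ norms and inserting the derivative bound delivers \eqref{Taylor expansion formula} with $B_{k+1} = D_{k+1}/(k+1)!$.

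For the gradient estimate \eqref{gradient Taylor expansion} I would prove the stronger pointwise bound $|\nabla^{g_C}\partial_t^j g|_{g_C}(x,t) \leq D_j'/r(x)^{2j+1}$ on $\Omega_\lambda$, then repeat the Taylor-remainder argument. To obtain this, I write $\nabla^{g_C} = \nabla^{g(t)} + T(t)$, where $T(t) = \Gamma^{g_C}-\Gamma^{g(t)}$ is a tensor expressible as a contraction of $g(t)^{-1}$ with $\nabla^{g_C}(g(t)-g_C)$. Applying $\nabla^{g(t)}$ to the polynomial representation of $\partial_t^j g$ raises the covariant-derivative order of one factor by one and therefore gains an additional $1/r$ through Theorem \ref{Siepmann}. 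The remaining contribution from $T(t)$ is handled by the auxiliary bound $|\nabla^{g_C}(g(t)-g_C)|_{g_C} \le C/r^3$ on $\Omega_\lambda$, which I obtain by differentiating the identity $g(t) - g_C = -2\int_0^t \Ric(g(s))\,ds$ in space and using $|\nabla^{g(s)}\Ric|_{g(s)} \leq C/r^3$ from Theorem \ref{Siepmann}, combined once again with Lemma \ref{equivalence of metric}.

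The main obstacle will be precisely this gradient estimate: Theorem \ref{Siepmann} naturally supplies $g(t)$-covariant control, whereas the conclusion is formulated in terms of the fixed connection $\nabla^{g_C}$, so careful bookkeeping of the connection difference $\Gamma^{g_C}-\Gamma^{g(t)}$ and of the $g_C$-derivatives of $g(t)-g_C$ is required. Once this one-derivative transfer is in place, all higher-order statements reduce to the same polynomial/induction template as in the $C^0$-case, and no further analytic input beyond Theorem \ref{Siepmann} and Lemma \ref{equivalence of metric} is needed.
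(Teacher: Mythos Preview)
Your approach is correct and essentially identical to the paper's: both derive the structural formula for $\partial_t^j g$ as a polynomial in covariant derivatives of curvature with total weight $2j$, then combine Theorem \ref{Siepmann} and Lemma \ref{equivalence of metric} with Taylor's integral remainder, and both handle the gradient estimate by splitting $\nabla^{g_C}=\nabla^{g(t)}+T(t)$ with $T(t)=g(t)^{-1}*\nabla^{g_C}(g(t)-g_C)$.

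The one step you underspecify is the auxiliary bound on $h:=\nabla^{g_C}(g(t)-g_C)$. Writing $\nabla^{g_C}\Ric(g(s)) = \nabla^{g(s)}\Ric(g(s)) + g(s)^{-1}*h(s)*\Ric(g(s))$ reintroduces $h$ on the right-hand side, so a direct integration of $h(t)=-2\int_0^t\nabla^{g_C}\Ric(g(s))\,ds$ is circular; the paper closes this loop with a short Gr\"onwall argument on $y(s)=|h(x,s)|_{g_C}^2$, obtaining first $|h|_{g_C}\le C/r$ and then, by reinsertion, $|h|_{g_C}\le Ct/r^3$. Your stated bound $C/r^3$ is too strong on $\Omega_\lambda$ (since $t$ can be as large as $r^2/\lambda$), but the weaker bound $C/r$ already suffices to control the $T(t)*\partial_t^{k+1}g$ term in the remainder.
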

To prove this proposition, we need a lemma related to higher order time derivatives of metric along Ricci flow.
\begin{lemma}
    Let $(M,g(t))_{t\in (0,T)}$ be a smooth solution to Ricci flow. Then for $k\in\N^*$, the following equality holds point-wisely
        \begin{equation}\label{time derivative along RF}
            \frac{\partial^k}{\partial t^k}g(t)=\sum_{p+q=0}^k\sum_{\sum_{j=1}^p i_j=2q}(\nabla^{g(t)})^{i_1}\Rm(g(t))*\cdot\cdot\cdot*(\nabla^{g(t)})^{i_p}\Rm(g(t))*\Rm(g(t))^{k-p-q}.
        \end{equation}
        Here we use the notation $T*S$ for the linear combination of contractions of tensors, and $\underbrace{T*\cdot\cdot\cdot*T}_{n}:=T^n$ for tensor $T$.
\end{lemma}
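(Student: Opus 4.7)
The proof proceeds by induction on $k\ge 1$. The base case $k=1$ is just the Ricci flow equation $\partial_t g(t)=-2\Ric(g(t))$, and $\Ric$ is a contraction of $\Rm$ with the metric, which matches the schematic right-hand side with $(p,q)=(1,0)$, $i_1=0$ and empty $\Rm^{0}$ factor.

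For the inductive step, I would assume the identity at level $k$ and apply $\partial_t$ to each term once more. The three evolution identities needed are $\partial_t g\sim\Rm$ (hence also $\partial_t g^{-1}\sim\Rm$), $\partial_t\Rm=\Delta_{g(t)}\Rm+\Rm*\Rm$, and the commutator relation $[\partial_t,\nabla^{g(t)}]\sim\nabla^{g(t)}\Rm$ coming from $\partial_t\Gamma\sim\nabla\Ric$. By the Leibniz rule, each time-differentiation of a term $(\nabla^{g(t)})^{i_1}\Rm*\cdots*(\nabla^{g(t)})^{i_p}\Rm*\Rm^{k-p-q}$ produces a finite sum of terms obtained by letting $\partial_t$ hit either an implicit metric contraction, a bare $\Rm$ factor, or one of the differentiated factors $\nabla^{i_j}\Rm$.

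The bookkeeping then runs case by case. Hitting an implicit $g^{-1}$ in a contraction, or applying the $\Rm*\Rm$ piece of $\partial_t\Rm$ to a bare $\Rm$, increases the bare exponent by one and leaves $(p,q)$ unchanged; applying the $\Delta\Rm$ piece to a bare $\Rm$ shifts $(p,q)\mapsto(p+1,q+1)$ with a new index $i_{p+1}=2$ and decreases the bare exponent by one; iterating the commutator through $\nabla^{i_j}\Rm$ yields either a $\nabla^{i_j+2}\Rm$ term (so $q\mapsto q+1$, $p$ unchanged) or a split $\nabla^{a}\Rm*\nabla^{b}\Rm$ with $a+b=i_j$ (so $p\mapsto p+1$, $q$ unchanged). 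In every case one checks that the new indices satisfy $p'+q'\le k+1$, $\sum_{j}i'_j=2q'$ and $(k+1)-p'-q'\ge 0$, so the produced term lies in the schematic sum at level $k+1$, closing the induction.

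The only routine obstacle is the commutator expansion $\partial_t\nabla^{i_j}\Rm$: one iterates $[\partial_t,\nabla^{g(t)}]T\sim\nabla^{g(t)}\Rm*T$ exactly $i_j$ times to push $\partial_t$ past $\nabla^{i_j}$ onto $\Rm$, each commutator trading one $\nabla$ acting on the current tensor for a $\nabla\Rm$ factor and thereby preserving the total number of $\nabla$'s distributed among the output factors, hence the parity constraint $\sum_{j}i_j\equiv 0\pmod{2}$. Beyond this combinatorial check the identity is purely formal, living at the schematic level of contractions of iterated covariant derivatives of the curvature tensor, with no additional geometric input required.
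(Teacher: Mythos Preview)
Your proposal is correct and follows essentially the same inductive approach as the paper. The only difference is cosmetic: the paper invokes the known evolution identity $\partial_t(\nabla^{m}\Rm)=\Delta(\nabla^{m}\Rm)+\sum_{l=0}^{m}\nabla^{l}\Rm*\nabla^{m-l}\Rm$ directly (citing \cite[Lemma 3.7]{Deruelle}) rather than rederiving it via the commutator $[\partial_t,\nabla]\sim\nabla\Rm$, and it does not separately discuss the case where $\partial_t$ hits an implicit metric contraction, absorbing that into the $*$ notation; your bookkeeping is in fact slightly more explicit on this point.
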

\begin{proof}
            This proof is based on induction and is based on the evolution equation of $\nabla^{g(t),k}\Rm$. For convenience, we omit all dependence related to $ g(t)$.

When $k=1$, \eqref{time derivative along RF} holds automatically by the Ricci flow equation. Assume that \eqref{time derivative along RF} is true for $k\ge1$.
     \begin{equation*}
         \frac{\partial^{k+1}}{\partial t^{k+1}}g(t)=\frac{\partial}{\partial t}\left(\sum_{p+q=0}^k\sum_{\sum_{j=1}^p i_j=2q}\nabla^{i_1}\Rm*\cdot\cdot\cdot*\nabla^{i_p}\Rm*\Rm^{k-p-q}\right).
     \end{equation*}

Recall that (see \cite[Lemma 3.7]{Deruelle})
\begin{equation*}
    \frac{\partial}{\partial t}(\nabla^k\Rm)=\Delta(\nabla^k\Rm)+\sum_{l=0}^k\nabla^l\Rm*\nabla^{k-l}\Rm.
\end{equation*}
     Hence for any $1\le j\le p$
\begin{equation*}
     \begin{aligned}
        &\quad\nabla^{i_1}\Rm*\cdot\cdot\cdot\frac{\partial}{\partial t}\nabla^{i_j}\Rm *\cdot\cdot\cdot*\nabla^{i_p}\Rm*\Rm^{k-p-q}\\ 
        &=\nabla^{i_1}\Rm*\cdot\cdot\cdot(\Delta(\nabla^k\Rm)+\sum_{l=0}^{i_j}\nabla^l\Rm*\nabla^{i_j-l}\Rm) *\cdot\cdot\cdot*\nabla^{i_p}\Rm*\Rm^{k-p-q}\\
        &=\sum_{l=0}^{i_j}\nabla^{i_1}\Rm*\cdot\cdot\cdot\nabla^l\Rm*\nabla^{i_j-l}\Rm *\cdot\cdot\cdot*\nabla^{i_p}\Rm*\Rm^{k-p-q}\\
        &\quad +\nabla^{i_1}\Rm*\cdot\cdot\cdot*\nabla^{i_j+2}\Rm *\cdot\cdot\cdot*\nabla^{i_p}\Rm*\Rm^{k-p-q}.\\
    \end{aligned}
    \end{equation*}
  If $k-p-q\neq0$
    \begin{equation*}
     \begin{aligned}
        &\quad \nabla^{i_1}\Rm*\cdot\cdot\cdot*\nabla^{i_p}\Rm*\frac{\partial}{\partial t}(\Rm^{k-p-q}) \\
        &=\nabla^{i_1}\Rm*\cdot\cdot\cdot*\nabla^{i_p}\Rm*\Rm^{k-p-q-1}*(\Delta\Rm+\Rm*\Rm)\\
        &=\nabla^{i_1}\Rm*\cdot\cdot\cdot*\nabla^{i_p}\Rm*\Rm^{k+1-p-q}+\nabla^{i_1}\Rm*\cdot\cdot\cdot*\nabla^{i_p}\Rm*\nabla^2\Rm*\Rm^{k-p-q-1}.\\
    \end{aligned}
    \end{equation*}
   All of these terms above belong to $\sum_{p+q=0}^{k+1}\sum_{\sum_{j=1}^p i_j=2q}\nabla^{i_1}\Rm*\cdot\cdot\cdot*\nabla^{i_p}\Rm*\Rm^{k+1-p-q}$, thus \eqref{time derivative along RF} holds for $k+1$.
   
   By induction, \eqref{time derivative along RF} holds for all $k\in\N_0$.
\end{proof}
With \eqref{time derivative along RF}, we prove Lemma \ref{Taylor's expansion} directly. 
\begin{proof}[Proof of Proposition \ref{Taylor's expansion}]
In this proof $\{B_k>0\}_{k\in\N_0}$ denotes constants independent of time but they may vary from line to line.

  Fix $k\in\N_0$, performing a Taylor expansion with integral remainder,
           \begin{equation*}
               \begin{aligned}
                     \left|g(t)-\sum_{j=0}^k\frac{t^j}{j!}\left(\frac{\partial^j}{\partial t^j}g(t)\bigg|_{t=0}\right)\right|_{g_C}(x)&\le \frac{1}{k!}\int_0^t\left|\frac{\partial^{k+1}}{\partial s^{k+1}}g(s)\right|_{g_C}(x)(t-s)^kds\\
                &\le C(n) \frac{A_0}{k!}\int_0^t \left|\frac{\partial^{k+1}}{\partial s^{k+1}}g(s)\right|_{g(s)}(x)(t-s)^kds.\\
               \end{aligned}
           \end{equation*}
           By \eqref{quadratic decay} and \eqref{time derivative along RF}, there exists a constant $B_{k+1}>0$ such that
           \begin{equation*}
                \left|\frac{\partial^{k+1}}{\partial s^{k+1}}g(s)\right|_{g(s)}(x)\le C(n)\sum_{p+q=0}^{k+1}\sum_{\sum_{j=1}^p i_j=2q}|\nabla^{i_1}\Rm|\cdot\cdot\cdot|\nabla^{i_p}\Rm||\Rm|^{k+1-p-q}(x)
                \le B_{k+1}\frac{1}{r(x)^{2+2k}}.
           \end{equation*}
           Hence \eqref{Taylor expansion formula} holds for all $k\in \N_0$.

           To obtain \eqref{gradient Taylor expansion}, we first need to estimate $h(x,t):=\nabla^{g_C}(g(t)-g_C)(x)$ for $(x,t)\in\Omega_\lambda$. Fix $(x,t)\in \Omega_\lambda$ and define $y(s):=|h(x,s)|_{g_C}^2$ for $0< s\le t$. Since $(x,s)\in\Omega_\lambda$,
           \begin{equation*}
               \begin{aligned}
                   \frac{d}{ds}y(s)&\le 4|\nabla^{g_C}\Ric{(g(s))}|_{g_C}(x)\sqrt{y(s)}\\
        &\le4\left(|(\nabla^{g_C}-\nabla^{g(s)})\Ric(g(s))|_{g_C}(x)+|\nabla^{g(s)}\Ric(g(s))|_{g_C}(x)\right)\sqrt{y(s)}.\\
               \end{aligned}
           \end{equation*}
           Notice that
    \begin{equation*}
        \begin{aligned}
          (\nabla^{g_C}-\nabla^{g(s)})\Ric(g(s))&=g(s)^{-1}*h(x,s)*\Ric(g(s)),\\
            |(\nabla^{g_C}-\nabla^{g(s)})\Ric(g(s))|_{g_C}&\le C(n)|g(s)^{-1}|_{g_C}(x)|\Ric(g(s))|_{g_C}(x)\sqrt{y(s)}\\
            &\le C(n)A_0|\Ric(g(s))|_{g_C}(x)\sqrt{y(s)}\\
            &\le C(n)A_0^2|\Ric(g(s))|_{g(s)}(x)\sqrt{y(s)}\\
            &\le C(n)C_0A_0^2\frac{1}{r(x)^2}\sqrt{y(s)},\\
        \end{aligned}
    \end{equation*}
    and by \eqref{quadratic decay},
    \begin{equation*}
        |\nabla^{g(s)}\Ric(g(s))|_{g_C}(x)\le C(n)A_0 |\nabla^{g(s)}\Ric(g(s))|_{g(s)}(x)\le C(n)A_0C_1\frac{1}{r(x)^3}.
    \end{equation*}
    Hence by Cauchy-Schwarz
    \begin{equation*}
         \frac{d}{ds}y(s)\le \left(C(n)A_0C_1\frac{1}{r(x)^3}+C(n)C_0A_0^2\frac{1}{r(x)^2}\sqrt{y(s)}\right)\sqrt{y(s)}\le B_0\left(\frac{y(s)}{r(x)^2}+\frac{1}{r(x)^4}\right).
    \end{equation*}
    By integration and initial condition $y(0)=0$, we have $y(s)\le \frac{1}{r(x)^2}e^{B_0\frac{s}{r(x)^2}}\le B_0\frac{1}{r(x)^2}$ as $s\le t\le \lambda
    ^{-1}r(x)^2$.

    Moreover,
    \begin{equation*}
        \begin{aligned}
             |h(x,t)|_{g_C}&\le 2\int_0^t|\nabla^{g_C}\Ric(g(s))|_{g_C}(x)ds\\
        &\le2 \int_0^t \left(|(\nabla^{g_C}-\nabla^{g(s)})\Ric(g(s))|_{g_C}(x)+|\nabla^{g(s)}\Ric(g(s))|_{g_C}(x)\right)ds\\
        &\le B_0\int_0^t \left(\frac{1}{r(x)^2}\sqrt{y(s)}+\frac{1}{r(x)^3}\right) ds\\
        &\le B_0\int_0^t \frac{1}{r(x)^3}ds=B_0\frac{t}{r(x)^3}.\\
        \end{aligned}
    \end{equation*}
    For higher order expansion, performing a Taylor expansion with integral remainder:
    \begin{equation*}
       \left|\nabla^{g_C}\left(g(t)-\sum_{j=0}^k\frac{t^j}{j!}\bigg(\frac{\partial^j}{\partial t^j}g(t)\bigg|_{t=0}\bigg)\right)\right|_{g_C}(x)\le \frac{1}{k!}\int_0^t\left|\nabla^{g_C}\left(\frac{\partial^{k+1}}{\partial s^{k+1}}g(s)\right)\right|_{g_C}(x)(t-s)^kds.
    \end{equation*}
    Notice that,
    \begin{equation*}
        \begin{aligned}
            \nabla^{g_C}\left(\frac{\partial^{k+1}}{\partial s^{k+1}}g(s)\right)(x)&=(\nabla^{g_C}-\nabla^{g(s)})\left(\frac{\partial^{k+1}}{\partial s^{k+1}}g(s)\right)(x)+\nabla^{g(s)}\left(\frac{\partial^{k+1}}{\partial s^{k+1}}g(s)\right)(x)\\
                &=g(s)^{-1}*h(x,s)*\left(\frac{\partial^{k+1}}{\partial s^{k+1}}g(s)\right)(x)+\nabla^{g(s)}\left(\frac{\partial^{k+1}}{\partial s^{k+1}}g(s)\right)(x).
        \end{aligned}
    \end{equation*}
   Hence,
     \begin{equation*}
         \left|\nabla^{g_C}\left(\frac{\partial^{k+1}}{\partial s^{k+1}}g(s)\right)\right|_{g_C}(x)\le C(n)A_0\left( |h(x,s)|_{g_C}\left|\frac{\partial^{k+1}}{\partial s^{k+1}}g(s)\right|_{g_C}(x)+\left|\nabla^{g(s)}\left(\frac{\partial^{k+1}}{\partial s^{k+1}}g(s)\right)\right|_{g_C}(x)\right),
     \end{equation*}
     for each term by \eqref{quadratic decay} and \eqref{time derivative along RF}:
            \begin{equation*}
                \left|\frac{\partial^{k+1}}{\partial s^{k+1}}g(s)\right|_{g_C}(x)\le C(n)A_0\left|\frac{\partial^{k+1}}{\partial s^{k+1}}g(s)\right|_{g(s)}(x)\le C(n)A_0B_{k+1}\frac{1}{r(x)^{2+2k}}=B_{k+1}\frac{1}{r(x)^{2+2k}},
            \end{equation*}
            \begin{equation*}
                \left|\nabla^{g(s)}\left(\frac{\partial^{k+1}}{\partial s^{k+1}}g(s)\right)\right|_{g_C}(x)\le C(n)A_0\left|\nabla^{g(s)}\left(\frac{\partial^{k+1}}{\partial s^{k+1}}g(s)\right)\right|_{g(s)}(x)\le B_{k+1}\frac{1}{r(x)^{2+2k+1}}.
            \end{equation*}
            Thus,
            \begin{equation*}
                \left|\nabla^{g_C}\left(\frac{\partial^{k+1}}{\partial s^{k+1}}g(s)\right)\right|_{g_C}(x)\le B_{k+1}\frac{1}{r(x)^{2+2k+1}}.
            \end{equation*}
            As a consequence, we have
            \begin{equation*}
                \left|\nabla^{g_C}\left(g(t)-\sum_{j=0}^k\frac{t^j}{j!}\bigg(\frac{\partial^j}{\partial t^j}g(t)\bigg|_{t=0}\bigg)\right)\right|_{g_C}(x)\le \int_0^t B_{k+1}\frac{1}{r(x)^{2+2k+1}}(t-s)^kds=B_{k+1}\frac{t^{k+1}}{r(x)^{2+2k+1}}.
            \end{equation*}
     Thus \eqref{gradient Taylor expansion} holds for all $k\in\N_0$.
\end{proof}
\section{Set up of a complex Monge-Amp\`ere equation}\label{Normalization of problem}

Let $(M,g,J,X)$ be a gradient expanding K\"ahler-Ricci soliton as in Theorem \ref{main}, $(C_0,g_0)$ be the K\"ahler cone as in Theorem \ref{main}, and let $g(t)$ be the forward self-similar K\"ahler-Ricci flow associated to $g$. Suppose that $f\in C^\infty(M;\R)$ is the normalized Hamiltonian potential of vector field $X$ such that $\nabla^gf=X$ and $|\partial f|^2_g+R_w=f$.
\subsection{Symmetries of the K\"ahler potential}
\begin{lemma}\label{KV}
 The Reeb vector field $JX$ is a Killing vector field for $g(t)$ for all $t\in (0,\infty)$.
\end{lemma}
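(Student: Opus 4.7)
The plan is to establish this in two stages: first verify the Killing property for the base soliton metric $g$ itself, then propagate it along the defining relation $g(t)=t\Phi_t^*g$ using the fact that $X$ and $JX$ commute as vector fields.

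First I would check that $\mathcal{L}_{JX}g=0$. Two ingredients are needed. (i) From the gradient soliton equation \eqref{soliton1}, $\mathrm{Hess}_g(f)=\mathrm{Ric}(g)+g$ is $J$-invariant since both summands on the right-hand side are; this says $X=\nabla^gf$ is real-holomorphic, i.e.\ $\mathcal{L}_XJ=0$, and a short calculation then gives $\mathcal{L}_{JX}J=0$ as well. (ii) With $\omega(U,V)=g(JU,V)$ the K\"ahler form of $g$, the identity $X=\nabla^gf$ yields $i_{JX}\omega=-df$, so Cartan's formula combined with $d\omega=0$ gives $\mathcal{L}_{JX}\omega=d(i_{JX}\omega)+i_{JX}d\omega=0$. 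Combining (i) and (ii) through the Hermitian identity $g(\cdot,\cdot)=\omega(\cdot,J\cdot)$ produces $\mathcal{L}_{JX}g=0$.

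Second, I would note that $[X,JX]=0$: plugging $Y=X$ into the standard identity $[X,JY]=J[X,Y]+(\mathcal{L}_XJ)(Y)$ and using $\mathcal{L}_XJ=0$ yields $[X,JX]=J[X,X]=0$. Consequently the flow $\Phi^X_s$ of $X$ preserves $JX$, and by naturality of the Lie derivative, pullback by $\Phi^X_s$ commutes with $\mathcal{L}_{JX}$ as operators on tensor fields.

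Third, I would apply this to $g(t)=t\Phi_t^*g$, where by construction $\Phi_t=\Phi^X_{-\frac12\log t}$:
\[
\mathcal{L}_{JX}g(t)=t\,\mathcal{L}_{JX}(\Phi_t^*g)=t\,\Phi_t^*(\mathcal{L}_{JX}g)=0,
\]
which is the desired conclusion. There is no genuine obstacle in this lemma: each step is either a standard identity for K\"ahler-Ricci solitons or a formal commutation of Lie derivatives along commuting flows. The substantive work of this section lies rather in transferring the Killing property from the self-similar flow $g(t)$ to the a priori unknown flow $g_\varphi(t)$ (the lemma referenced in Remark \ref{rmk}), where the forward and backward uniqueness theorems of Chen-Zhu and Kotschwar come into play.
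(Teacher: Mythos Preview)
Your proof is correct and follows essentially the same three-step skeleton as the paper: (1) establish $\mathcal{L}_{JX}g=0$, (2) observe $[X,JX]=0$ since $X$ is real-holomorphic, (3) pull this back along $\Phi_t$ using $g(t)=t\Phi_t^*g$. The only cosmetic difference is in step (1): the paper computes $(\mathcal{L}_{JX}g)(Y,Z)=g(\nabla^g_YJX,Z)+g(Y,\nabla^g_ZJX)$ directly via $\nabla^gJ=0$ and the symmetry of $\mathrm{Hess}_g f$, whereas you argue symplectically via $i_{JX}\omega=-df$ and Cartan's formula together with $\mathcal{L}_{JX}J=0$; both routes are standard and equally short.
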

\begin{proof}
    The Reeb vector field $JX$ is a Killing vector field for $g$. 
    
    For any two vector fields $Y,Z$, we have
    \begin{align*}
        (\mathcal{L}_{JX}g)(Y,Z)&=g(\nabla^g_YJX,Z)+g(Y,\nabla^g_ZJX)\\
        &=g(\nabla^g_{JY}X,Z)-g(JY,\nabla^g_ZX)\\
        &=\nabla^{g,2}f(JY,Z)-\nabla^{g,2}f(Z,JY)\\
        &=0.
    \end{align*}
    As $X$ is a real-holomorphic vector field, $[JX,X]=J([X,X])=0$, this implies that $\Phi_t^*(JX)=JX$ for any $t\in(0,\infty)$. Since $g(t)=t\Phi_t^*g$, and $\mathcal{L}_{JX}g=0$, we have $\mathcal{L}_{JX}g(t)=0$ as expected.
\end{proof}
In Theorem \ref{main}, we assume that $JX$ is Killing for $g_\varphi(t_0)$ for some $t_0\in (0,T)$. The following lemma tells us $JX$ is Killing for $g_\varphi(t)$ for all $t\in (0,T).$
\begin{lemma}\label{Killing condition}
    Let $g_\varphi(t)_{t\in (0,T)}$ be the solution to the K\"ahler-Ricci flow in Theorem \ref{main}, then $\mathcal{L}_{JX}g_\varphi(t)=0$ for all $t\in (0,T)$.
\end{lemma}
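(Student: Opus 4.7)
The plan is to apply the forward uniqueness theorem of Chen-Zhu and the backward uniqueness theorem of Kotschwar to a one-parameter family of pullbacks of $g_\varphi(t)$, thereby transferring the Killing property from the single time $t_0$ to every $t\in(0,T)$.

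First I would note that $JX$ is real-holomorphic (since $X$ is real-holomorphic and $J$ is integrable, so that $N_J=0$ forces $\mathcal{L}_{JX}J=0$), and that $JX$ is a complete vector field on $M$ (being Killing for the complete metric $g$ by Lemma \ref{KV}). Its flow $\Phi^{JX}_s$ is therefore a one-parameter group of biholomorphisms of $M$ defined for every $s\in\mathbb{R}$. For each $s\in\mathbb{R}$, set
\[
\tilde g^s(t) := (\Phi^{JX}_s)^*g_\varphi(t),\qquad t\in(0,T).
\]
Since the Ricci flow equation is invariant under pullback by diffeomorphisms, $\tilde g^s(\cdot)$ is a smooth complete Kähler-Ricci flow on $(0,T)$, and the hypothesis $\mathcal{L}_{JX}g_\varphi(t_0)=0$ is equivalent to the identity $\tilde g^s(t_0) = g_\varphi(t_0)$ for every $s$. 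Moreover, $|\Rm(\tilde g^s(t))|_{\tilde g^s(t)}$ and $|\Rm(g_\varphi(t))|_{g_\varphi(t)}$ have the same supremum on $M$ at every $t$.

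Next I would upgrade the slice-by-slice curvature bound to a uniform bound on every compact subinterval of $(0,T)$, which is required to invoke Chen-Zhu and Kotschwar. The conical condition and the slice-by-slice curvature bound let us apply Theorem \ref{Siepmann} to $g_\varphi(\cdot)$, yielding quadratic curvature decay on the parabolic region $\Omega_\lambda$. Consequently, on any compact subinterval $[a,b]\subset(0,T)$ the curvature of $g_\varphi(t)$ is uniformly bounded outside a compact subset $K\subset M$, independently of $t\in[a,b]$. On $K$, the slice-by-slice bound together with Shi's derivative estimates (applied forward from each time slice in $[a,b]$ and combined with a short-time covering argument) provides the missing uniform bound on $K\times[a,b]$. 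The same conclusion holds for $\tilde g^s(t)$.

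Finally, I would run an open-closed argument. Let $I:=\{t\in(0,T):\tilde g^s(t)=g_\varphi(t)\}$. By the Killing hypothesis $t_0\in I$, so $I\neq\emptyset$; continuity of both flows makes $I$ closed in $(0,T)$; Chen-Zhu's forward uniqueness applied on short intervals where both flows have uniformly bounded curvature shows $I$ is right-open at each of its points, and Kotschwar's backward uniqueness shows $I$ is left-open. Hence $I=(0,T)$, and differentiating $\tilde g^s(t)=g_\varphi(t)$ in $s$ at $s=0$ yields $\mathcal{L}_{JX}g_\varphi(t)=0$ for all $t\in(0,T)$, as desired. The main technical hurdle is the second step: producing uniform curvature bounds on compact time intervals from the mere slice-by-slice hypothesis, for which Theorem \ref{Siepmann} (handling the conical end) combined with Shi-type parabolic regularity (handling the compact core) is essential.
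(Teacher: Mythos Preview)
Your approach is essentially the same as the paper's: both establish uniform-in-time curvature bounds on each compact subinterval $[a,b]\subset(0,T)$ via Theorem~\ref{Siepmann} for the conical end, and then invoke Chen--Zhu forward uniqueness and Kotschwar backward uniqueness to propagate the Killing condition from $t_0$. Your explicit pullback flows $\tilde g^s(t)=(\Phi^{JX}_s)^*g_\varphi(t)$ and open--closed argument simply unpack what the paper states in one sentence.

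One small point: your treatment of the compact core is more elaborate than necessary. Shi's estimates bound \emph{derivatives} of curvature in terms of a given curvature bound; they do not by themselves furnish the curvature bound you need on $K\times[a,b]$. The simpler (and correct) observation, which the paper uses implicitly, is that $(x,t)\mapsto|\Rm(g_\varphi(t))|_{g_\varphi(t)}(x)$ is continuous on the compact set $K\times[a,b]$ because the flow is smooth on $M\times(0,T)$, hence bounded there. No parabolic regularity or covering argument is required.
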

\begin{proof}
    From the Killing condition of Theorem \ref{main}, there exists $t_0\in (0,T)$ such that $\mathcal{L}_{JX}g_\varphi(t_0)=0$. From the curvature condition and conical condition, $g_\varphi(t)$ is a Ricci flow coming out of K\"ahler cone $(C_0,g_0)$ (see Definition \ref{Def ricci flow coming out of cone}). Thanks to \eqref{quadratic decay}, there exist constants $\lambda>0$ and $C_0>0$ independent of time such that 
    \begin{equation*}
        |\Rm(g_\varphi(t))|_{g_\varphi(t)}(x)\le C_0r(\pi(x))^{-2}
    \end{equation*}
for all $(x,t)$ such that $r(\pi(x))^2\ge\lambda t$.
    
    Fix $t_0<t_1<T$, choose $x\in M$ such that $r(\pi(x))^2\ge\lambda t_1$, hence for all $t\in [t_0,t_1]$ \begin{equation*}
        |\Rm(g_\varphi(t))|_{g_\varphi(t)}(x)\le C_0r(\pi(x))^{-2}\le C_0(\lambda t_1)^{-1}.
    \end{equation*}
    We deduce that there exists a constant $K_1>0$ such that for all $t\in [t_0,t_1]$,
    \begin{equation*}
        \sup_M|\Rm(g_\varphi(t))|_{g_\varphi(t)}\le K_1.
    \end{equation*}
    As a consequence of the uniqueness result of \cite[Theorem 1.1]{Chen-Zhu}, since $JX$ is a Killing vector field for $g_\varphi(t_0)$, $JX$ is a Killing vector field for $g_\varphi(t_1)$.

    Now fix $0<t_2<t_0$, choose $x\in M$ such that $r(\pi(x))^2\ge\lambda t_0$, hence for all $t\in [t_2,t_0]$ \begin{equation*}
        |\Rm(g_\varphi(t))|_{g_\varphi(t)}(x)\le C_0r(\pi(x))^{-2}\le C_0(\lambda t_0)^{-1}.
    \end{equation*}
    We deduce that there exists a constant $K_2>0$ such that for all $t\in [t_2,t_0]$,
    \begin{equation*}
        \sup_M|\Rm(g_\varphi(t))|_{g_\varphi(t)}\le K_2.
    \end{equation*}
    As a consequence of the backward uniqueness theorem due to \cite[Theorem 1.1]{Kotschwar3}, since $JX$ is a Killing vector field for $g_\varphi(t_0)$, $JX$ is a Killing vector field for $g_\varphi(t_2)$.
    
    Hence $JX$ is a Killing vector field for $g_\varphi(t)$ for all $t\in (0,T)$.
\end{proof}
We have shown that $JX$ is a Killing vector field for $g_\varphi(t)$ for all $t\in (0,T)$, the following lemma implies that the K\"ahler potential $\varphi$ could admit more symmetries thanks to the Killing vector field $JX$.
\begin{lemma}\label{JX varphi=0}
    Let $g_\varphi(t)$ be the solution to K\"ahler-Ricci flow in Theorem \ref{main}. There exists a smooth real-valued function $\tilde{\varphi}\in C^\infty(M\times (0,T))$ such that $JX\cdot\tilde{\varphi}=0$ and $\partial\bar{\partial}\tilde{\varphi}=\partial\bar{\partial}\varphi$.
\end{lemma}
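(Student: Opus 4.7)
The plan is to construct $\tilde\varphi$ by averaging $\varphi$ over the closure of the flow of $JX$. Denote by $(\Psi_s)_{s\in\mathbb{R}}$ the one-parameter group generated by $JX$. By Lemma \ref{Killing condition}, $JX$ is Killing for $g_\varphi(t)$ for every $t\in(0,T)$, and by Remark \ref{rmk}(iii) the Reeb flow extends to a holomorphic isometric action of a compact real torus $T\subset\mathrm{Isom}(M,g)$ whose Lie algebra contains $JX$. I would first check that both $w(t)$ and $w_\varphi(t)$ are $T$-invariant. For $w(t)=t\Phi_t^*w$, this uses $[X,JX]=J[X,X]=0$ (since $X$ is real-holomorphic), so $\Phi_t$ commutes with the $T$-action and hence $T$ preserves $w(t)$ because it already preserves $w$. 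For $w_\varphi(t)$, each $\sigma\in T$ is a $C^\infty$-limit of iterates $\Psi_{s_k}$, which preserve $g_\varphi(t)$ by Lemma \ref{Killing condition}, so $\sigma^\ast g_\varphi(t)=g_\varphi(t)$ by continuity.

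Next I would set
\begin{equation*}
\tilde\varphi(x,t):=\int_T \varphi(\sigma\cdot x,t)\,d\sigma,
\end{equation*}
where $d\sigma$ is the normalized Haar measure on $T$. Compactness of $T$ together with differentiation under the integral sign makes $\tilde\varphi$ smooth on $M\times(0,T)$ and $T$-invariant; in particular $JX\cdot\tilde\varphi=0$. Interchanging $i\partial\bar\partial$ with the integral and using the $T$-invariance of $w_\varphi(t)-w(t)=i\partial\bar\partial\varphi(t)$ then gives
\begin{equation*}
i\partial\bar\partial\tilde\varphi(t)=\int_T \sigma^\ast\bigl(w_\varphi(t)-w(t)\bigr)\,d\sigma=w_\varphi(t)-w(t)=i\partial\bar\partial\varphi(t),
\end{equation*}
which is the identity claimed.

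The main (and essentially only) delicate point is verifying that the compact torus $T$ furnished by the soliton geometry really acts isometrically on the a priori unrelated metric $g_\varphi(t)$. Lemma \ref{Killing condition} directly supplies only invariance under the one-parameter subgroup $\{\Psi_s\}_{s\in\mathbb{R}}$; the passage to its closure $T$ is a standard continuity argument in the $C^\infty$-topology on $\mathrm{Diff}(M)$, but it is the step that must be written out carefully to justify that averaging against $d\sigma$ makes sense as a genuine symmetrization for $g_\varphi(t)$. Once this is in place, smoothness of the average and commutation of $i\partial\bar\partial$ with the integral are routine, and the lemma follows.
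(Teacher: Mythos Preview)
Your proposal is correct and follows essentially the same route as the paper: both construct $\tilde\varphi$ by averaging $\varphi$ over the compact torus obtained as the closure of the $JX$-flow inside the isometry group of $(M,g)$, and both deduce $\partial\bar\partial\tilde\varphi=\partial\bar\partial\varphi$ from the $T$-invariance of $w_\varphi(t)-w(t)$. The paper dispatches the ``delicate point'' you flag (that $T$ acts isometrically on $g_\varphi(t)$) in a single sentence, but your explicit closure/continuity justification is exactly the content behind that step.
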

\begin{proof}
    Consider the isometry group of $(M,\,g)$ that fixes $E$ endowed with the topology induced by uniform convergence on compact
subsets of $M$. By the Arzel\`a-Ascoli theorem, this is a compact Lie group. And \cite{Conlon-Deruelle-Sun} has proved that the flows of $X$ and $JX$ preserve $E$(see \cite[Lemma 2.6]{Conlon-Deruelle-Sun}). Thus in particular, the flow of $JX$ lies in such isometry group, taking the closure of the flow of $JX$ in this group therefore yields the holomorphic isometric action of a torus $T$ on $(M,\,J,\,g)$.

Since $JX$ is Killing with respect to $g_\varphi$ by Lemma \ref{Killing condition}, all elements in $T$ fix $g_\varphi$. Let $\mu_T$ be the normalized Haar measure of $T$, and we define
\begin{equation}\label{average of varphi}
    \Tilde{\varphi}:=\int_{a\in T}a^*\varphi d\mu_T(a).
\end{equation}
By construction, $\Tilde{\varphi}$ is invariant under the flow of $JX$, so it yields that $JX\cdot\Tilde{\varphi}=0$. In addition, Lemma \ref{KV} and Lemma \ref{Killing condition} imply that $0=\mathcal{L}_{JX}(w_\varphi)=\mathcal{L}_{JX}(i\partial\bar{\partial}\varphi)=i\partial\bar{\partial}(JX\cdot\varphi)$. By \eqref{average of varphi} we have 
\begin{equation*}
    \partial\bar{\partial}\Tilde{\varphi}=\int_{a\in T}a^*(\partial\bar{\partial}\varphi) d\mu_T(a)=\partial\bar{\partial}\varphi,
\end{equation*}
as expected.
\end{proof}
Since for all $t\in (0,T)$ $w_\varphi(t)=w(t)+i\partial\bar{\partial}\varphi(t)=w(t)+i\partial\bar{\partial}\Tilde{\varphi}(t)$. From now on, we choose to use $g_{\tilde{\varphi}}(w_{\tilde{\varphi}})$ instead of $g_\varphi(w_\varphi)$. For convenience, we keep the notation $\varphi$ for the K\"ahler potential.
\subsection{Reduction to a complex Monge-Amp\`ere equation}
The symmetries of the K\"ahler potential help us to reduce the K\"ahler-Ricci flow equation to a complex Monge-Amp\`ere equation.
\begin{prop}[Complex Monge-Amp\`ere equation]\label{Proposition monge ampere}
    Let $g_\varphi(t)_{t\in (0,T)}$ be the solution to K\"ahler-Ricci flow in Theorem \ref{main} satisfying
    \begin{equation}\label{K-R varphi flow}
        \begin{split}
            \frac{\partial}{\partial t}g_\varphi(t)&=-\Ric(g_\varphi(t)),\\
            JX\cdot\varphi(t)&=0 \quad t\in (0,T).
        \end{split}
    \end{equation}
    Then there exists a smooth real-valued function $\bar{\varphi}\in C^\infty(M\times (0,T))$ such that $\partial\bar{\partial}\bar{\varphi}=\partial\bar{\partial}\varphi$ and
    \begin{equation}\label{Monge-Ampere equation}
         \begin{split}
             \frac{\partial}{\partial t}\bar{\varphi}(t)&=\log\frac{w_{\bar{\varphi}}(t)^n}{w(t)^n},\\
             JX\cdot\bar\varphi(t)&=0\quad t\in (0,T),
         \end{split}
    \end{equation}
    with $\pi_*\bar{\varphi}(t)$ converges to $0$ locally smoothly when $t$ tends to $0$. Here $w_{\bar{\varphi}}(t)=w(t)+i\partial\bar{\partial}\bar{\varphi}(t)$ for all $t\in (0,T).$
\end{prop}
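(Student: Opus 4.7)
The plan is to lift the K\"ahler-Ricci flow equation for $w_\varphi(t)$ to a scalar Monge-Amp\`ere flow at the level of potentials. Differentiating the cohomological relation $w_\varphi(t)=w(t)+i\partial\bar\partial\varphi(t)$ in $t$ and using the two Ricci flow equations $\partial_t w_\varphi=-\rho_{w_\varphi}$, $\partial_t w=-\rho_w$, together with the standard identity $\rho_w-\rho_{w_\varphi}=i\partial\bar\partial\log(w_\varphi^n/w^n)$, I find
\[
i\partial\bar\partial\,\partial_t\varphi \;=\; \rho_w-\rho_{w_\varphi} \;=\; i\partial\bar\partial\log\frac{w_\varphi^n}{w^n},
\]
so that the error $h(x,t):=\partial_t\varphi(x,t)-\log\frac{w_\varphi^n}{w^n}(x,t)$ is pluriharmonic in $x$ for every fixed $t$. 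Since $JX$ is holomorphic and Killing for both $w(t)$ (Lemma~\ref{KV}) and $w_\varphi(t)$ (Lemma~\ref{Killing condition}), it preserves both K\"ahler volume forms, so $JX\cdot\log(w_\varphi^n/w^n)=0$; combined with $JX\cdot\varphi=0$ this gives $JX\cdot h=0$.

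Next, I fix a reference time $t_0\in(0,T)$ and set
\[
\bar\varphi(x,t) \;:=\; \varphi(x,t_0) + \int_{t_0}^{t}\log\frac{w_\varphi(s)^n}{w(s)^n}(x)\,ds.
\]
Then $\partial_t\bar\varphi=\log(w_\varphi^n/w^n)$ holds by construction. The pluriharmonicity of $h$ forces $\partial_t(\partial\bar\partial\bar\varphi-\partial\bar\partial\varphi)=0$, and the two $(1,1)$-forms agree at $t=t_0$, so $\partial\bar\partial\bar\varphi=\partial\bar\partial\varphi$ throughout $(0,T)$. Consequently $w_{\bar\varphi}=w_\varphi$, which turns the above into the desired Monge-Amp\`ere equation $\partial_t\bar\varphi=\log(w_{\bar\varphi}^n/w^n)$, while the $JX$-invariance of the integrand together with that of $\varphi(t_0)$ yields $JX\cdot\bar\varphi=0$.

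The main technical step is the initial condition $\pi_*\bar\varphi(t)\to0$ locally smoothly on $C_0\setminus\{o\}$ as $t\to 0^+$. Applying the Taylor expansion of Proposition~\ref{Taylor's expansion} to both $g_\varphi(s)$ and $g(s)$ yields $\log(w_\varphi^n/w^n)(x,s)=O(s\cdot r(x)^{-2})$ on $C_0\setminus\{o\}$, so the defining integral converges in $C^\infty_{\mathrm{loc}}$ as $t\to 0^+$. A direct computation using the conical condition shows that
\[
\lim_{t\to 0^+}\pi_*\bar\varphi(t) \;=\; \pi_*\varphi(t_0)-\int_0^{t_0}\pi_*\log\frac{w_\varphi^n}{w^n}(s)\,ds
\]
is pluriharmonic on $C_0\setminus\{o\}$. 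The main obstacle, which I expect to require the bulk of the work, is the normalization step: since the cohomology condition of Theorem~\ref{main} only determines $\varphi$ up to addition of a $JX$-invariant pluriharmonic function on $M$, one should finish by subtracting from $\varphi$ a global pluriharmonic function on $M$ whose push-forward by $\pi$ equals this limit. Producing such a correction amounts to extending a pluriharmonic function from $C_0\setminus\{o\}$ across the exceptional set $E$, which I would attempt using the resolution structure of $\pi:M\to C_0$ together with the torus invariance supplied by Lemma~\ref{JX varphi=0}.
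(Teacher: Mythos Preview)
Your approach is sound and ultimately equivalent to the paper's, but you have misjudged where the work lies. The paper's key observation dissolves your ``main obstacle'' entirely: any $JX$-invariant pluriharmonic function $\kappa$ on $M$ is in fact \emph{constant}. The argument is short. From $JX\cdot\kappa=0$ one has that $X^{1,0}\cdot\kappa$ is real; from $i\partial\bar\partial\kappa=0$ and holomorphicity of $X^{1,0}$ one has $\bar\partial(X^{1,0}\cdot\kappa)=0$. Thus $X^{1,0}\cdot\kappa$ is a real-valued holomorphic function on $M$, hence constant, so $X\cdot\kappa\equiv c$. Pushing forward via $\pi$ gives $r\partial_r(\pi_*\kappa)=c$, i.e.\ $\pi_*\kappa=c\log r+\bar\kappa$ with $\bar\kappa$ defined on the link $S$. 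Since $\kappa$ is pluriharmonic it is harmonic, and computing $\Delta_{g_0}(c\log r+\bar\kappa)=0$ with $n\ge 2$ forces $c=0$ and $\Delta_{g_S}\bar\kappa=0$, hence $\bar\kappa$ constant. Density of $M\setminus E$ then makes $\kappa$ constant on all of $M$.

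Once you have this, your construction collapses to the paper's: applied to $h(t)$ it gives $h(t)\equiv c(t)$ a spatial constant, so your $\bar\varphi$ differs from $\varphi$ by $\int_{t_0}^t c(s)\,ds$, which is the paper's primitive; applied to your limit $\lim_{t\to0^+}\pi_*\bar\varphi$ it shows this limit is a constant $b$, and subtracting $b$ (trivially a smooth function on $M$) completes the normalization. No extension of a pluriharmonic function across $E$ is ever needed. The only organizational difference is that the paper applies the constancy lemma to $h(t)$ first and then integrates, whereas you integrate first and would apply it to the $t\to0$ limit; both orders work.
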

\begin{proof}

    We compute the difference of \eqref{Kahler-Ricci flow equation} and \eqref{K-R varphi flow}, we get
    \begin{equation*}
        i\partial\bar{\partial}\frac{\partial}{\partial t}\varphi(t)= \rho_{w(t)}-\rho_{w_\varphi(t)}.
    \end{equation*}
    In K\"ahler manifold $M$
    \begin{equation*}
        \rho_{w(t)}-\rho_{w_\varphi(t)}=i\partial\bar{\partial}\log\frac{w_\varphi(t)^n}{w(t)^n}.
    \end{equation*}
    This yields that 
    \begin{equation}
         i\partial\bar{\partial}\left(\frac{\partial}{\partial t}\varphi(t)-\log\frac{w_\varphi(t)^n}{w(t)^n}\right)=0.
    \end{equation}
    Let $\kappa(t):=\frac{\partial}{\partial t}\varphi(t)-\log\frac{w_\varphi(t)^n}{w(t)^n}$. We have $i\partial\bar{\partial}\kappa(t)=0$ and $JX\cdot \kappa(t)=0$ thanks to Lemma \ref{Killing condition} and Lemma \ref{JX varphi=0}. If we denote $X^{1,0}$ to be the holomorphic part of $X$, then $JX\cdot\kappa(t)=0$ is equivalent to say that $X^{1,0}\cdot\kappa(t)$ is real. Since $i\partial\bar{\partial}\kappa(t)=0$, it follows that
    $\bar{\partial}(X^{1,0}\cdot\kappa(t))=0$. This means $X^{1,0}\cdot\kappa(t)$ is a real-valued holomorphic function on $M$, this results in that $X^{1,0}\cdot\kappa(t)$ is constant for each time $t$. Therefore $X\cdot\kappa(t)$ is a constant denoted by $c$.

    Since $d\pi(X)=r\partial_r$, equation $r\partial_r\pi_*\kappa(t)=c$ means $\pi_*\kappa(t)=c\log r+\bar{\kappa}(t)$, where $\bar{\kappa}$ denotes a smooth function that is only defined on the link $S$ of $C_0$. Moreover, as $i\partial\bar{\partial}(\pi_*\kappa(t))=0$, in particular $\Delta_{g_0}\pi_*\kappa(t)=0$, $n\ge2$, so that,
    \begin{equation*}
        0=\Delta_{g_0}\pi_*\kappa(t)=cr^{-2}(2n-2)+r^{-2}\Delta_{g_S}\bar{\kappa}.
    \end{equation*}
    This implies that $c=0$ and $\bar{\kappa}$ is a constant. Since $M\backslash E$ is dense on $M$, $\kappa(t)$ is a constant in space denoted by $c(t).$ Hence
    \begin{equation}\label{Monge-Ampere equation tilde}
        \frac{\partial}{\partial t}\varphi(t)-\log\frac{w_\varphi(t)^n}{w(t)^n}=c(t).
    \end{equation}
    Take $C(t)$ as a primitive of $c(t)$ such that $C'(t)=c(t)$, and let $\tilde{\varphi}(t)=\varphi(t)+C(t)$, we have
    \begin{equation*}
        \frac{\partial}{\partial t}\tilde{\varphi}(t)-\log\frac{w_{\tilde{\varphi}}(t)^n}{w(t)^n}=0.
    \end{equation*}

Local convergence (conical condition stated in Theorem \ref{main}) of $\pi_*w_\varphi(t),\pi_*w(t)$ and \eqref{Monge-Ampere equation tilde} imply that one can extend $\pi_*\tilde{\varphi}(t)$ to a smooth function at time 0, which we denote by $\varphi_0$.

In particular, $\partial\bar{\partial}\varphi_0=0$ in $C_0-\{\textrm{vertex}\}$ and $J_0X\cdot \varphi_0=0$. By the same reason as above $\varphi_0$ is a constant $b$. Let $\bar\varphi(t)=\tilde\varphi(t)-b$, we have $\partial\bar\partial\bar\varphi=\partial\bar\partial\varphi$, $JX\cdot\bar\varphi=JX\cdot\varphi=0$ and 
 \begin{equation*}
         \frac{\partial}{\partial t}\bar{\varphi}(t)=\log\frac{w_{\bar{\varphi}}(t)^n}{w(t)^n},\quad t\in (0,T)
    \end{equation*}
    with $\pi_*\bar\varphi$ converging to 0 when $t$ tends to 0 as desired.
\end{proof}
\textbf{Since $\partial\bar\partial\bar\varphi=\partial\bar\partial\varphi$ and $JX\cdot\bar\varphi=JX\cdot\varphi=0$, from now on, we choose to use $g_{\bar\varphi}$(resp. $w_{\bar\varphi}$) instead of $g_\varphi$(resp. $w_\varphi$). For convenience, we keep the notation $\varphi$ for the K\"ahler potential.}
\section{A priori estimates at spatial infinity}\label{A priori estimate at spatial infinity}
We adapt the notation $\Omega_\lambda$ mentioned in section \ref{Ricci flow coming out of cone}, and we identify $M\backslash E$ with $C_0\backslash\{o\}$ as in Theorem \ref{main} through the biholomorphism $\pi$.

In this section, we aim to derive precise estimates in this parabolic set $\Omega_\lambda$. Building on the expansion formula developed (Proposition \ref{Taylor's expansion}), we first establish polynomial decay in $\Omega_\lambda$. Then, by applying the maximum principle, we obtain a nearly optimal estimate, which ultimately leads to exponential decay.

Recall the setting of $\varphi$ as in Proposition \ref{Proposition monge ampere}:
\begin{equation*}
    \begin{split}
        &\frac{\partial}{\partial t}\varphi(t)=\log\frac{w_\varphi(t)^n}{w(t)^n},\\
        & JX\cdot\varphi(t)=0,\quad \forall t\in (0,T),
    \end{split}
\end{equation*}
with $\pi_*\varphi(t)$ converges to $0$ locally smoothly when $t$ tends to $0$.
\subsection{Polynomial decay}
From Section \ref{Ricci flow coming out of cone}, both $g(t)$ and $g_\varphi(t)$ as in Theorem \ref{main} are Ricci flows coming out of the same Kähler cone $(C_0,g_0)$ and share the same Taylor expansion formula. 

As a consequence of Proposition \ref{Taylor's expansion} applied to $g_\varphi(t)_{t\in (0,T)}$ and $g(t)_{t\in (0,\infty)}$, we have the following proposition:
\begin{prop}\label{consequence of taylor expansion}
    There exist constants $\{B_k>0\}_{k\in\N_0}$ and $\lambda>0$ independent of time such that
    \begin{equation}\label{polynomial estimate of Hessian and gradient of Hessian}
        \begin{split}
             & |\partial\bar{\partial}\varphi(x,t)|_{g_0}\le B_k\frac{t^k}{r(x)^{2k}},\quad\textrm{for all $(x,t)\in\Omega_\lambda$, $k\in\N_0$},\\
             & |\nabla^{g_0}(\partial\bar{\partial}\varphi(x,t))|_{g_0}\le B_k\frac{t^k}{r(x)^{2k+1}},\quad\textrm{for all $(x,t)\in\Omega_\lambda$, $k\in\N_0$},\\
        \end{split}
    \end{equation}
\end{prop}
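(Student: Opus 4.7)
The plan is to apply Proposition \ref{Taylor's expansion} to both of the flows $g(t)$ and $g_\varphi(t)$ and exploit that, as two Ricci flows emerging from the same K\"ahler cone $(C_0,g_0)$, they must share an identical formal Taylor expansion at $t=0^+$. Subtracting one expansion from the other then causes total cancellation at each polynomial order, and only the decaying remainders survive; this directly yields the desired bound on $\partial\bar{\partial}\varphi$ via the cohomology condition $w_\varphi=w+i\partial\bar{\partial}\varphi$.

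First I would verify that both flows fit the framework of Definition \ref{Def ricci flow coming out of cone}. For $g(t)=t\Phi_t^{*}g$ this is immediate from Theorem \ref{Theorem of CDS}, while for $g_\varphi(t)$ it follows from the conical and curvature conditions of Theorem \ref{main} (after the harmless time rescaling needed to pass between the conventions $\partial_t g=-\Ric$ and $\partial_t g=-2\Ric$, which only modifies the constants $B_k$ and $\lambda$). Applying Proposition \ref{Taylor's expansion} to each flow then gives, on a common parabolic set $\Omega_\lambda$, the remainder bound
\begin{equation*}
\left|g_\varphi(t)-\sum_{j=0}^{k}\tfrac{t^j}{j!}\,\partial_t^{j}g_\varphi\big|_{t=0^{+}}\right|_{g_0}\!(x)\le B_{k+1}\tfrac{t^{k+1}}{r(x)^{2+2k}},
\end{equation*}
together with an identical bound for $g(t)$, valid for all $k\in\N_0$ and $(x,t)\in\Omega_\lambda$.

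The crucial step is to check that the Taylor coefficients $\partial_t^{j}g_\varphi|_{t=0^{+}}$ and $\partial_t^{j}g|_{t=0^{+}}$ coincide for every $j$. By the universal identity \eqref{time derivative along RF}, at any positive time $\partial_t^{j}$ of either flow is the same fixed polynomial expression in the curvature tensor and its covariant derivatives, evaluated at the metric of that flow. Thanks to the conical condition and the quadratic curvature decay from Theorem \ref{Siepmann}, this curvature data converges locally smoothly to that of $g_0$ as $t\to 0^{+}$, so both coefficients reduce to one and the same universal polynomial in $\Rm(g_0)$ and its covariant derivatives. Subtracting the two expansions and using the triangle inequality gives
\begin{equation*}
|g_\varphi(t)-g(t)|_{g_0}(x)\le 2B_{k+1}\tfrac{t^{k+1}}{r(x)^{2+2k}},\qquad (x,t)\in\Omega_\lambda,\ k\in\N_0,
\end{equation*}
which, combined with the pointwise inequality $|\partial\bar{\partial}\varphi|_{g_0}\le C(n)|g_\varphi-g|_{g_0}$ coming directly from $w_\varphi-w=i\partial\bar{\partial}\varphi$, yields the first estimate of \eqref{polynomial estimate of Hessian and gradient of Hessian} after relabeling $k+1\mapsto k$ and absorbing constants. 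The gradient bound follows by repeating the argument starting from \eqref{gradient Taylor expansion} in place of \eqref{Taylor expansion formula}.

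The main technical point is precisely this matching of Taylor coefficients at $t=0^{+}$: one must interpret $\partial_t^{j}g|_{t=0^{+}}$ as a limit and verify that this limit is intrinsic to $g_0$ rather than depending on the particular Ricci flow issuing from it. The universality of \eqref{time derivative along RF}, together with the local smooth convergence in the conical condition and the derivative curvature decay of Theorem \ref{Siepmann}, is precisely what makes this identification rigorous; once granted, the remainder is a routine triangle inequality.
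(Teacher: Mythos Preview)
Your proposal is correct and follows exactly the approach the paper intends: the paper simply states that both $g(t)$ and $g_\varphi(t)$ ``share the same Taylor expansion formula'' and declares Proposition~\ref{consequence of taylor expansion} a consequence of Proposition~\ref{Taylor's expansion}, without spelling out the subtraction argument. You have correctly filled in the details, including the key point that the universal identity \eqref{time derivative along RF} together with local smooth convergence to $g_0$ forces the Taylor coefficients of both flows to coincide, and the routine conversion from $|g_\varphi-g|_{g_0}$ to $|\partial\bar\partial\varphi|_{g_0}$ via the cohomology condition.
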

Thanks to the complex Monge-Amp\`ere equation \eqref{Monge-Ampere equation}, we establish polynomial decay of $\varphi,X\cdot\varphi$ and $\frac{\partial}{\partial t}\varphi:=\dot{\varphi}$.
\begin{prop}[Polynomial decay]
    For $\lambda>0$ as in Proposition \ref{polynomial estimate of Hessian and gradient of Hessian}, for any $k\in \N_0$, there exists a constant $C_k>0$ independent of time, such that for all $(x,t)\in\Omega_\lambda$:
    \begin{equation}\label{polynomial decay formula}
        \begin{split}
            &|\dot{\varphi}(x,t)|\le C_k\frac{t^k}{r(x)^{2k}},\\
            &|X\cdot\dot\varphi(x,t)|\le C_k\frac{t^k}{r(x)^{2k}},\\
            &|X\cdot\varphi(x,t)|\le C_k\frac{t^{k+1}}{r(x)^{2k}},\\
            &|\varphi(x,t)|\le C_k\frac{t^{k+1}}{r(x)^{2k}}.
        \end{split}
    \end{equation}
\end{prop}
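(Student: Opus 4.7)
The strategy is to bootstrap from the Hessian estimates of Proposition \ref{consequence of taylor expansion} using the complex Monge-Amp\`ere equation \eqref{Monge-Ampere equation} and then integrate in time against the conical initial condition.

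First, I would enlarge $\lambda$ once and for all so that $B_1/\lambda$ is as small as needed: by the $k=1$ case of \eqref{polynomial estimate of Hessian and gradient of Hessian}, this forces $|\partial\bar\partial\varphi|_{g_0}\le B_1/\lambda$ throughout $\Omega_\lambda$. Combined with Lemma \ref{equivalence of metric} applied to the soliton flow $g(t)$, one obtains that $g_0$, $g(t)$ and $g_\varphi(t)$ are mutually uniformly equivalent on $\Omega_\lambda$ and that the endomorphism $A:=g(t)^{-1}i\partial\bar\partial\varphi$ has operator norm at most a small universal constant.

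\emph{Step 1 (decay of $\dot\varphi$).} In a local holomorphic frame one has $\dot\varphi=\log\det(I+A)$. The elementary estimate $|\log\det(I+A)|\le C_n|A|$ for $|A|\le 1/2$ yields
\begin{equation*}
|\dot\varphi(x,t)|\le C\,|\partial\bar\partial\varphi(x,t)|_{g(t)}\le C'\,|\partial\bar\partial\varphi(x,t)|_{g_0}\le C_k\,\frac{t^k}{r(x)^{2k}}
\end{equation*}
by \eqref{polynomial estimate of Hessian and gradient of Hessian}, valid for every $k\in\N_0$.

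\emph{Step 2 (decay of $X\cdot\dot\varphi$).} Differentiating the log-determinant identity by Jacobi's formula yields, after cyclically simplifying,
\begin{equation*}
X\cdot\dot\varphi \;=\; \mathrm{tr}_{g_\varphi}\!\bigl(X\cdot i\partial\bar\partial\varphi\bigr)\;+\;\bigl(\mathrm{tr}_{g_\varphi}-\mathrm{tr}_g\bigr)\!\bigl(X\cdot g(t)\bigr),
\end{equation*}
the derivatives being read componentwise in the local frame. Since $d\pi(X)=r\partial_r$, one has $|X|_{g_0}=r$ on $\Omega_\lambda$, and the gradient estimate \eqref{polynomial estimate of Hessian and gradient of Hessian} gives
\begin{equation*}
|X\cdot i\partial\bar\partial\varphi|_{g_0}\le|X|_{g_0}\cdot|\nabla^{g_0}\partial\bar\partial\varphi|_{g_0}\le r\cdot B_k\frac{t^k}{r^{2k+1}}=B_k\frac{t^k}{r^{2k}},
\end{equation*}
up to Christoffel-type remainders of strictly better order. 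For the second piece I would use $|g_\varphi^{-1}-g^{-1}|_{g_0}\lesssim|\partial\bar\partial\varphi|_{g_0}$ together with \eqref{gradient Taylor expansion} applied to $g(t)$ to bound $|X\cdot g(t)|_{g_0}$; this contribution ends up strictly lower order. Summing, $|X\cdot\dot\varphi(x,t)|\le C_k t^k/r(x)^{2k}$.

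\emph{Step 3 (decay of $\varphi$ and $X\cdot\varphi$).} By Proposition \ref{Proposition monge ampere}, $\pi_*\varphi(\cdot,s)\to 0$ locally smoothly on $C_0\setminus\{o\}$ as $s\to 0^+$, and since $X$ is tangent to $C_0\setminus\{o\}$, also $X\cdot\varphi(x,s)\to 0$ for every fixed $x\in M\setminus E$. For such $x$, $(x,s)\in\Omega_\lambda$ whenever $0<s<r(x)^2/\lambda$, so we may integrate along the time slice $\{x\}\times(0,t]$:
\begin{equation*}
\varphi(x,t)=\int_0^t\dot\varphi(x,s)\,ds,\qquad X\cdot\varphi(x,t)=\int_0^t X\cdot\dot\varphi(x,s)\,ds,
\end{equation*}
and feed in the pointwise bounds from Steps 1 and 2 to obtain
\begin{equation*}
|\varphi(x,t)|,\;|X\cdot\varphi(x,t)|\;\le\;\frac{C_k}{k+1}\cdot\frac{t^{k+1}}{r(x)^{2k}}.
\end{equation*}

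The main technical subtlety I foresee lies in Step 2: one must verify carefully that the Christoffel-type remainders arising from switching between $\nabla^{g_0}$ and $\nabla^{g(t)}$, and the contribution $(\mathrm{tr}_{g_\varphi}-\mathrm{tr}_g)(X\cdot g(t))$, really are of strictly better decay than the leading term $X\cdot i\partial\bar\partial\varphi$. Once this accounting is handled cleanly via the Taylor expansions \eqref{Taylor expansion formula}--\eqref{gradient Taylor expansion} of $g(t)$, the time integration in Step 3 is immediate.
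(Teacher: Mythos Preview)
Your approach is essentially the same as the paper's: bound $\dot\varphi$ from the Monge--Amp\`ere equation and the Hessian decay, differentiate along $X$ to control $X\cdot\dot\varphi$, and then integrate in time from the conical initial condition. Two minor points of comparison are worth noting.

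For Step~1, the paper avoids enlarging $\lambda$ by using the concavity of $\log\det$ directly: the inequalities $\Delta_{w_\varphi(t)}\varphi\le\dot\varphi\le\Delta_{w(t)}\varphi$ hold unconditionally, and each Laplacian is controlled by $|\partial\bar\partial\varphi|_{g_0}$ once one knows $g(t)$ and $g_\varphi(t)$ are uniformly equivalent to $g_0$ (Lemma~\ref{equivalence of metric} applied to both flows). Your $\log\det(I+A)$ estimate is equally valid but requires the preliminary smallness of $A$.

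For Step~2, the paper writes the identity invariantly as $X\cdot\dot\varphi=\tr_{w_\varphi}(\mathcal L_Xw_\varphi)-\tr_{w}(\mathcal L_Xw(t))$ and then splits into the two pieces $\tr_{w_\varphi}(\mathcal L_X\partial\bar\partial\varphi)$ and $(\tr_{w_\varphi}-\tr_{w(t)})(\mathcal L_Xw(t))$. Your claim that the second piece is ``strictly lower order'' is not quite right: since $|\nabla^{g(t)}X|_{g_0}$ (equivalently $|\mathcal L_Xw(t)|_{g_0}$) is only \emph{bounded} on $\Omega_\lambda$ rather than decaying, this term contributes at the \emph{same} order $C_k t^k/r^{2k}$ as the first. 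That is still exactly what is needed, so the conclusion is unaffected, but the accounting you flag at the end should be done with this in mind.
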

\begin{proof}
For $(x,t)\in\Omega_\lambda$,
    the complex Monge-Amp\`ere equation \eqref{Monge-Ampere equation} implies that
    \begin{equation*}
        \begin{split}
            &\dot{\varphi}(x,t)\le\Delta_{w(t)}\varphi(x,t),\\
            &\dot{\varphi}(x,t)\ge \Delta_{w_\varphi(t)}\varphi(x,t).\\
        \end{split}
    \end{equation*}
    Since $|\Delta_{w(t)}\varphi(x,t)|\le C(n)|g(t)^{-1}|_{g_0}|i\partial\bar{\partial}\varphi(x,t)|_{g_0}\le C(n)A_0B_k\frac{t^k}{r(x)^k}$. Take $C_k:=C(n)A_0B_k$ such that $\dot{\varphi}(x,t)\le C_k\frac{t^k}{r(x)^{2k}}$. From now on, we use $C_k>0$ to denote a constant independent of time and itself would vary from line to line. 

    Moreover, $|\Delta_{w_\varphi(t)}\varphi(x,t)|\le C(n)|g_\varphi(t)^{-1}|_{g_0}|i\partial\bar{\partial}\varphi(x,t)|_{g_0}\le C_k\frac{t^k}{r(x)^k}$, hence $|\dot{\varphi}(x,t)|\le C_k\frac{t^k}{r(x)^{2k}}$. By integration and $\lim_{t\to0^+}\varphi(x,t)=0$, hence $|\varphi(x,t)|\le C_k\frac{t^{k+1}}{r(x)^{2k}}$ follows immediately.

Recall that $X\cdot\dot\varphi(t)=X\cdot\log\frac{w_\varphi(t)^n}{w(t)^n}=\tr_{w_\varphi(t)}(\mathcal{L}_Xw_\varphi(t))-\tr_{w(t)}(\mathcal{L}_Xw(t))$.

On the one hand \begin{equation*}
 |\tr_{w_\varphi(t)}(\mathcal{L}_Xw_\varphi(t))-\tr_{w(t)}(\mathcal{L}_Xw(t))|\le   |\tr_{w_\varphi(t)}(\mathcal{L}_Xw(t))-\tr_{w(t)}(\mathcal{L}_Xw(t))|_{g_0}+|\tr_{w_\varphi(t)}\left(\mathcal{L}_X(\partial\bar{\partial}\varphi(t))\right)|_{g_0}.
\end{equation*}
On the other hand
\begin{equation*}
    \begin{split}
        \tr_{w_\varphi(t)}(\mathcal{L}_Xw(t))-\tr_{w(t)}(\mathcal{L}_Xw(t))&=\nabla^{g(t)}X*(g_\varphi(t)^{-1}-g(t)^{-1})=\nabla^{g(t)}X*\partial\bar{\partial}\varphi(t)*g_\varphi(t)^{-1}*g(t)^{-1},\\
        \tr_{w_\varphi(t)}\left(\mathcal{L}_X(\partial\bar{\partial}\varphi(t))\right)&=g_\varphi(t)^{-1}*\nabla^{g_0}X*\partial\bar\partial\varphi(t)+g_\varphi(s)^{-1}*X*\nabla^{g_0}\partial\bar\partial\varphi(t).
    \end{split}
\end{equation*}
For $(x,t)\in\Omega_\lambda$, by Proposition \ref{polynomial estimate of Hessian and gradient of Hessian} we have
\begin{equation*}
    |\tr_{w_\varphi(t)}(\mathcal{L}_Xw(t))-\tr_{w(t)}(\mathcal{L}_Xw(t))|_{g_0}(x)\le C(n)|\nabla^{g(t)}X|_{g_0}|g_\varphi(t)|_{g_0}|g(t)|_{g_0}|\partial\bar{\partial}\varphi(t)|_{g_0}(x)\le C_k\frac{t^k}{r(x)^{2k}},
\end{equation*}
and,
\begin{equation*}
    \begin{aligned}
        |\tr_{w_\varphi(t)}\left(\mathcal{L}_X(\partial\bar{\partial}\varphi(t))\right)|_{g_0}(x)&\le C(n)|g_{\varphi}(t)^{-1}|_{g_0}\left(|\nabla^{g_0}X|_{g_0}|\partial\bar{\partial}\varphi(t)|_{g_0}+|\nabla^{g_0}\partial\bar{\partial}\varphi(t)|_{g_0}|X|_{g_0})(x)\right)\\
        &\le C_k\frac{t^k}{r(x)^{2k}}.
    \end{aligned}
\end{equation*}
    Since $\lim_{t\to0^+}X\cdot\varphi(x,t)=0$, by integration we have $X\cdot\varphi(x,t)\le C_k\frac{t^{k+1}}{r(x)^{2k}}$ as required.

\end{proof}
\subsection{Obstruction tensor}
Recall that the self-similar solution to K\"ahler-Ricci flow $g(t)$ satisfies
\begin{equation}\label{soliton for g(t)}
    \mathcal{L}_{\frac{X}{2}}g(t)-t\Ric(g(t))-g(t)=0,\quad \textrm{for all $t\in \R_+$}.
\end{equation}
Hence the obstruction tensor $T(t)$ to evaluate if $g_\varphi(t)$ coincides with $g(t)$ must be
\begin{equation}\label{obstruction tensor}
    T(t)=\mathcal{L}_{\frac{X}{2}}g_\varphi(t)-t\Ric(g_\varphi(t))-g_\varphi(t),\quad \textrm{for all $t\in (0,T)$}.
\end{equation}
Combining it with \eqref{soliton for g(t)}, we get
\begin{equation*}
    \begin{aligned}
        T(t)&=\mathcal{L}_{\frac{X}{2}}g_\varphi(t)-\mathcal{L}_{\frac{X}{2}}g(t)+t\Ric(g(t))-t\Ric(g_\varphi(t))+g(t)-g_\varphi(t)\\
        &=\mathcal{L}_{\frac{X}{2}}(\partial\bar\partial\varphi(t))+t\partial\bar\partial\log\frac{w_\varphi(t)^n}{w(t)^n}-\partial\bar\partial\varphi(t)\\
        &=\partial\bar\partial\left(\frac{X}{2}\cdot\varphi(t)+t\dot\varphi(t)-\varphi(t)\right).
    \end{aligned}
\end{equation*}

The obstruction tensor is very important in Riemannian geometry and K\"ahler geometry. In Riemannian geometry, analysis of this obstruction tensor is more complicated. In K\"ahler geometry, from the above calculation, $T$ is a (1,1) form given by a scalar function. Analysis of this scalar function may help us better understand the obstruction tensor.

\textbf{Define $u(x,t):=t\dot{\varphi}(x,t)+\frac{1}{2}X\cdot\varphi(x,t)-\varphi(x,t)$ for $(x,t)\in M\times (0,T)$.}
In Section \ref{soliton obstruction fl}, the function $u$ is related to $\partial_\tau\psi$ in self-similar variables (pulled back by the flow of $X$).
\begin{prop}[Solution to Heat equation]\label{solution to heat equation}
   The function $u$ is a solution to the heat equation along Ricci flow, i.e.
    \begin{equation}
        \left(\frac{\partial}{\partial t}-\Delta_{w_\varphi(t)}\right)u(x,t)=0.
    \end{equation}
\end{prop}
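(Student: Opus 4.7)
The plan is to rewrite $u$ as the scalar potential of the obstruction tensor $T(t)$ from \eqref{obstruction tensor}, identify $\Delta_{w_\varphi(t)}u$ with the trace of $T(t)$, and compare this with $\partial_t u$ computed directly from the complex Monge-Amp\`ere equation \eqref{Monge-Ampere equation}.

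First I would verify that, as $(1,1)$-forms, $i\partial\bar\partial u$ coincides with the form associated to $T(t)$. Since $X$ is real-holomorphic, $\mathcal{L}_X$ preserves the Hodge bigrading and hence commutes with $\partial\bar\partial$ on functions, so $i\partial\bar\partial(X\cdot\varphi) = \mathcal{L}_X(w_\varphi(t)-w(t))$. Combined with $i\partial\bar\partial\dot\varphi = \rho_{w(t)} - \rho_{w_\varphi(t)}$ (obtained by applying $i\partial\bar\partial$ to \eqref{Monge-Ampere equation}) and $i\partial\bar\partial\varphi = w_\varphi(t)-w(t)$, the three pieces of $u$ reassemble into
\[
i\partial\bar\partial u \;=\; \bigl(\tfrac{1}{2}\mathcal{L}_X w_\varphi(t) - t\rho_{w_\varphi(t)} - w_\varphi(t)\bigr) - \bigl(\tfrac{1}{2}\mathcal{L}_X w(t) - t\rho_{w(t)} - w(t)\bigr).
\]
The second bracket vanishes by the self-similar identity \eqref{soliton for g(t)} for $g(t)$, and the first bracket is the $(1,1)$-form associated to $T(t)$. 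Taking trace with respect to $w_\varphi(t)$ and using $\tr_{w_\varphi}(\mathcal{L}_X w_\varphi) = \Div_{g_\varphi}(X)$ (which follows from $\mathcal{L}_X w_\varphi^n = \Div_{g_\varphi}(X)\,w_\varphi^n$), $\tr_{w_\varphi}\rho_{w_\varphi} = R_{w_\varphi}$ and $\tr_{w_\varphi}w_\varphi = n$ gives
\[
\Delta_{w_\varphi(t)}u \;=\; \tfrac{1}{2}\Div_{g_\varphi(t)}(X) - t R_{w_\varphi(t)} - n.
\]

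In parallel, I would differentiate $u$ in $t$ directly. The outer $t$-derivative cancels a copy of $\dot\varphi$, leaving $\partial_t u = t\ddot\varphi + \tfrac{1}{2}X\cdot\dot\varphi$. Differentiating $\dot\varphi = \log(w_\varphi^n/w^n)$ once more in $t$, and using $\partial_t w_\varphi^n = -R_{w_\varphi}w_\varphi^n$, $\partial_t w^n = -R_w w^n$, yields $\ddot\varphi = R_{w(t)} - R_{w_\varphi(t)}$; applying $X$ to the same identity yields $X\cdot\dot\varphi = \Div_{g_\varphi(t)}(X) - \Div_{g(t)}(X)$. Subtracting $\Delta_{w_\varphi(t)}u$ from $\partial_t u$ then collapses all the $g_\varphi$-dependent contributions:
\[
(\partial_t - \Delta_{w_\varphi(t)})u \;=\; t R_{w(t)} + n - \tfrac{1}{2}\Div_{g(t)}(X).
\]
To close, I take the real trace of \eqref{soliton for g(t)} against $g(t)$ and use $\tr_g(\mathcal{L}_X g) = 2\Div_g(X)$ together with $R^{\R}_{g(t)} = 2R_{w(t)}$; this produces $\tfrac{1}{2}\Div_{g(t)}(X) = tR_{w(t)} + n$, so the remaining right-hand side vanishes and the proposition follows.

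The only subtlety in the argument is bookkeeping of the K\"ahler versus Riemannian conventions---the factor of two between $\Delta_g$ and $\Delta_w$, between the Riemannian scalar curvature and the K\"ahler scalar $R_w$, and the identification of the $J$-invariant symmetric $(0,2)$-tensor $T(t)$ with its associated $(1,1)$-form---but once these are tracked consistently, every step is a direct manipulation of \eqref{Monge-Ampere equation} and the soliton equation \eqref{soliton for g(t)}.
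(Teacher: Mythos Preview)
Your proof is correct and follows essentially the same route as the paper: both differentiate the Monge--Amp\`ere relation \eqref{Monge-Ampere equation} to expand $\partial_t u = t\ddot\varphi + \tfrac12 X\cdot\dot\varphi$, and both close using the traced form of the self-similar identity \eqref{soliton for g(t)}. The only organizational difference is that you compute $\Delta_{w_\varphi}u$ separately by tracing the obstruction tensor $T(t)$ and phrase everything in terms of scalar curvatures and divergences, whereas the paper keeps the intermediate expressions as traces of $\mathcal{L}_X w(t)$ and $\partial_t w(t)$ and recognizes $\Delta_{w_\varphi}u$ only at the final step; the underlying identities are identical.
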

\begin{proof} On the one hand
    \begin{equation*}
        \begin{aligned}
            \frac{\partial}{\partial t} u&=\dot{\varphi}+t\frac{\partial}{\partial t}\dot{\varphi}+\frac{1}{2}X\cdot\dot{\varphi}-\dot{\varphi
    }\\
    &=t\frac{\partial}{\partial t}\dot{\varphi}+\frac{1}{2}X\cdot\dot\varphi.
        \end{aligned}
    \end{equation*}
    On the other hand \begin{equation*}
        \frac{\partial}{\partial t}\dot\varphi=\Delta_{w_\varphi(t)}\dot\varphi+\tr_{w_\varphi(t)}(\frac{\partial}{\partial t}w(t))-\tr_{w(t)}(\frac{\partial}{\partial t}w(t)).
    \end{equation*}
    And 
\begin{equation*}
    \begin{aligned}
        X\cdot\dot{\varphi}&=X\cdot\log\frac{w_\varphi(t)^n}{w(t)^n}\\
        &=\tr_{w_\varphi(t)}(\mathcal{L}_Xw_\varphi(t))-\tr_{w(t)}(\mathcal{L}_X w(t))\\
        &=\tr_{w_\varphi(t)}(\mathcal{L}_X w(t))-\tr_{w(t)}(\mathcal{L}_X w(t))+\Delta_{w_\varphi(t)}(X\cdot\varphi).
    \end{aligned}
\end{equation*}
By \eqref{soliton for g(t)} and $\frac{\partial}{\partial t}w(t)=-\rho_{w(t)}$, we have
\begin{equation*}
    \begin{aligned}
        \frac{\partial}{\partial t} u&=\Delta_{w_\varphi(t)}(u+\varphi)+\tr_{w_\varphi(t)}(w(t))-\tr_{w(t)}(w(t))\\
        &=\Delta_{w_\varphi(t)}(u+\varphi)-\Delta_{w_\varphi(t)}\varphi=\Delta_{w_\varphi(t)}u
    \end{aligned}
\end{equation*}
as desired.
\end{proof}
The obstruction tensor is exactly $\partial\bar\partial u$. Moreover, $u$ is a solution to the Heat equation along the K\"ahler-Ricci flow

We immediately get a rough estimates for $u,X\cdot u$ and $\partial\bar{\partial}u$ thanks to \eqref{polynomial decay formula}.
\begin{corollary}
    There exists a constant $N>0$ independent of time, such that for all $(x,t)\in\Omega_\lambda$, \begin{equation}\label{rough estimates of u}
        \begin{split}
            &|u(x,t)|\le N\frac{t^2}{r(x)^2},\\
            &|X\cdot u(x,t)|\le N\frac{t^2}{r(x)^2},\\
            &|\partial\bar{\partial}u(x,t)|_{g_0}\le N\frac{t}{r(x)^2}.\\
        \end{split}
    \end{equation}
\end{corollary}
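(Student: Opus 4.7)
The plan is to establish each of the three bounds by combining the polynomial decay of $\varphi$ and its derivatives from \eqref{polynomial decay formula} with the bounds on the complex Hessian in Proposition \ref{consequence of taylor expansion}.

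For the bound $|u(x,t)| \le N t^2/r(x)^2$ I would simply apply the triangle inequality to the defining identity $u = t\dot\varphi + \tfrac{1}{2}X\cdot\varphi - \varphi$ and invoke \eqref{polynomial decay formula} with $k=1$. Each of the three summands then contributes a term of size $O(t^2/r^2)$ on $\Omega_\lambda$, so the estimate follows immediately.

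For the bound $|\partial\bar\partial u|_{g_0} \le N t/r^2$ I would use the identification of $\partial\bar\partial u$ with the obstruction tensor derived just before the corollary, namely
\begin{equation*}
\partial\bar\partial u = \tfrac{1}{2}\mathcal{L}_{X}(\partial\bar\partial\varphi) + t\bigl(\rho_{w(t)} - \rho_{w_\varphi(t)}\bigr) - \partial\bar\partial\varphi.
\end{equation*}
The last term is controlled by Proposition \ref{consequence of taylor expansion} with $k=1$, giving $O(t/r^2)$. For the Lie derivative term I would use $d\pi(X) = r\partial_r$ (so $|X|_{g_0} \lesssim r$ and $|\nabla^{g_0} X|_{g_0}$ is uniformly bounded on the cone) together with the gradient estimate $|\nabla^{g_0}\partial\bar\partial\varphi|_{g_0} \le B_1 t/r^3$, producing again $O(t/r^2)$. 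For the Ricci-form difference, applying Theorem \ref{Siepmann} to the Ricci flow $g_\varphi(t)$ and to $g(t)$ yields the quadratic curvature decay $|\rho_{w_\varphi(t)} - \rho_{w(t)}|_{g_0} \lesssim 1/r^2$ on $\Omega_\lambda$, so multiplying by $t$ gives the desired $O(t/r^2)$.

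For the bound $|X\cdot u| \le N t^2/r^2$ I would decompose
\begin{equation*}
X\cdot u = t\,X\cdot\dot\varphi + \tfrac{1}{2}X\cdot(X\cdot\varphi) - X\cdot\varphi.
\end{equation*}
The first and third summands are controlled directly by \eqref{polynomial decay formula} with $k=1$, each giving $O(t^2/r^2)$. The hard part — and I expect this to be the only real obstacle — is the middle term $X\cdot(X\cdot\varphi)$. To handle it I would commute $X$ with $\partial_t$ (which is permissible since $X$ is time-independent) and use the vanishing $\lim_{t\to 0}\pi_*\varphi(t)=0$ to write $X\cdot(X\cdot\varphi)(t) = \int_0^t X\cdot(X\cdot\dot\varphi)(s)\,ds$. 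The integrand is bounded by extending the argument already used in the proof of \eqref{polynomial decay formula} by one more application of $X$: starting from the decomposition
\begin{equation*}
X\cdot\dot\varphi = \tr_{w_\varphi}(\mathcal{L}_X w) - \tr_w(\mathcal{L}_X w) + \tr_{w_\varphi}\bigl(\mathcal{L}_X(\partial\bar\partial\varphi)\bigr),
\end{equation*}
a further differentiation produces a sum of traced products of $\nabla^{g_0,j}X$ (all controlled, with the worst factor $|X|_{g_0}\lesssim r$) and of $\partial\bar\partial\varphi$, $\nabla^{g_0}\partial\bar\partial\varphi$, $(\nabla^{g_0})^2\partial\bar\partial\varphi$, together with metric factors $g_\varphi^{-1}, g^{-1}$ bounded by Lemma \ref{equivalence of metric}. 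The polynomial decay of these complex-Hessian derivatives on $\Omega_\lambda$ (obtained as in Proposition \ref{consequence of taylor expansion} from Proposition \ref{Taylor's expansion}, if needed at one higher derivative order) yields $|X\cdot(X\cdot\dot\varphi)|(x,s) \le C s/r(x)^2$. Integrating in time gives $|X\cdot(X\cdot\varphi)|(x,t) \le C t^2/r(x)^2$, which combines with the other two summands to produce the claimed bound. Choosing $N$ as the maximum of the three constants arising completes the proof.
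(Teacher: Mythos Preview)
Your treatment of $|u|$ and $|\partial\bar\partial u|$ matches the paper's proof. The difference is in the term $X\cdot(X\cdot\varphi)$ inside $X\cdot u$.

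You propose to control $X\cdot(X\cdot\varphi)$ by integrating $X\cdot(X\cdot\dot\varphi)$ in time, which forces you to differentiate the schematic identity for $X\cdot\dot\varphi$ once more in the $X$-direction. This produces terms involving $(\nabla^{g_0})^2\partial\bar\partial\varphi$ paired with $|X|_{g_0}^2\sim r^2$, so you need a second-order analogue of Proposition~\ref{consequence of taylor expansion}. That estimate is not established in the paper (Proposition~\ref{Taylor's expansion} stops at $\nabla^{g_0}$), and while it is plausible by the same Shi-type arguments, your proof as written relies on an unproven extension.

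The paper bypasses all of this with a one-line algebraic identity exploiting the normalization $JX\cdot\varphi=0$ from Lemma~\ref{JX varphi=0}. Since $X$ is real holomorphic and $JX\cdot\varphi=JX\cdot(X\cdot\varphi)=0$, one has $X^{\bar j}\bar\partial_{\bar j}\varphi=\tfrac12 X\cdot\varphi$ and $X^i\partial_i(X\cdot\varphi)=\tfrac12 X\cdot(X\cdot\varphi)$, hence
\[
\partial\bar\partial\varphi(X,X)=2X^iX^{\bar j}\partial_i\bar\partial_{\bar j}\varphi=2X^i\partial_i(X^{\bar j}\bar\partial_{\bar j}\varphi)=\tfrac12\,X\cdot(X\cdot\varphi).
\]
Now $|X\cdot(X\cdot\varphi)|\le 2|\partial\bar\partial\varphi|_{g_0}|X|_{g_0}^2\le 2B_2\,t^2 r^{-4}\cdot r^2=2B_2\,t^2/r^2$, using only the zeroth-order Hessian bound with $k=2$. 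This is both shorter and avoids any appeal to second-order gradient estimates; the $JX$-invariance of $\varphi$ is doing real work here that your approach does not exploit.
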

\begin{proof}
For convenience, $N>0$ is a constant independent of time that may vary from line to line.

    The estimate of $u$ follows immediately by \eqref{polynomial decay formula}. Due to \eqref{polynomial decay formula}, there exists a constant $N$ such that $|X\cdot\dot\varphi|\le N\frac{t}{r(x)^2}$.
    
    Moreover, since $JX\cdot\varphi=0, JX\cdot(X\cdot\varphi)=0$ and $X$ is real holomorphic, we have
    \begin{equation*}
        \partial\bar{\partial}{\varphi}(X,X)=2X^iX^{\bar j}\partial_i\bar\partial_{\bar j}\varphi=2X^i\partial_i(X^{\bar j}\bar\partial_{\bar j}\varphi)=X^i\partial_i(X\cdot\varphi)=\frac{1}{2}X\cdot(X\cdot\varphi).
    \end{equation*}
    Combining it with \eqref{polynomial estimate of Hessian and gradient of Hessian} and \eqref{polynomial decay formula}, there is a constant $N>0$ such that $|X\cdot u(x,t)|\le N\frac{t^2}{r(x)^2}$.

    Since $g_\varphi(t)$ and $g(t)$ are two solutions coming out of cone, there exists $N>0$ such that \begin{equation*}
        |\Ric(g_\varphi(t))(x)|_{g_0}+|\Ric(g(t))(x)|_{g_0}\le \frac{N}{r(x)^2}
    \end{equation*} for $(x,t)\in\Omega_\lambda$. Recall that
    \begin{equation*}
        \begin{aligned}
            \partial\bar{\partial}u(x,t)&=t\partial\bar{\partial}\log\frac{w_\varphi(t)}{w(t)}+\partial\bar{\partial}\left(\frac{1}{2}X\cdot\varphi(t)\right)-\partial\bar{\partial}\varphi(t)\\
            &=t\Ric(g(t))(x)-t\Ric(g_\varphi(t))(x)+\mathcal{L}_{\frac{X}{2}}(\partial\bar{\partial}\varphi(t))-\partial\bar{\partial}\varphi(t).\\
        \end{aligned}
    \end{equation*}
    By \eqref{polynomial estimate of Hessian and gradient of Hessian} and the proof of \eqref{polynomial decay formula}, we have $|\partial\bar{\partial}u(x,t)|_{g_0}\le N\frac{t}{r(x)^2}$ as required.
\end{proof}
\subsection{Hamiltonian potentials of $X$}
Let $f\in C^{\infty}(M;\R)$ be the normalized Hamiltonian potential of $X$ with respect to the K\"ahler-Ricci soliton metric $g$, then for any $t\in(0,T)$, function $t\Phi_t^*f$ is a Hamiltonian potential of $X$ with respect to $g(t)$.
\begin{lemma}
   For $t\in (0,T)$, $t\Phi_t^*f+\frac{1}{2}X\cdot\varphi(t)$ is a Hamiltonian potential of $X$ with respect to $g_{\varphi}(t)$.
\end{lemma}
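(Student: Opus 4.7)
The plan is to reformulate the claim as an identity of 1-forms and then verify it using the symmetries of $\varphi$. In the Kähler setting, a function $h$ satisfies $\nabla^{\tilde g}h = X$ precisely when $dh = -\iota_{JX}\tilde\omega$ (with $\tilde\omega$ the Kähler form of $\tilde g$), because $\tilde\omega(JX,V) = \tilde g(J^2X,V) = -\tilde g(X,V)$. Hence, using $w_\varphi(t) = w(t) + i\partial\bar\partial\varphi(t)$, what needs to be shown is
\begin{equation*}
d(t\Phi_t^*f) + \frac{1}{2}d(X\cdot\varphi(t)) = -\iota_{JX}w(t) - \iota_{JX}\bigl(i\partial\bar\partial\varphi(t)\bigr).
\end{equation*}

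First I would verify that $t\Phi_t^*f$ is a Hamiltonian potential of $X$ with respect to $g(t) = t\Phi_t^*g$. By naturality of the gradient under diffeomorphism one has $\nabla^{\Phi_t^*g}(\Phi_t^*f) = \Phi_t^*(\nabla^g f) = \Phi_t^*X$; the simultaneous scaling of metric and function by the factor $t$ leaves the gradient unchanged; and $\Phi_t = \Phi^X_{-\frac{1}{2}\log t}$ is a time map of the flow of $X$, so $\Phi_t^*X = X$. This gives $\nabla^{g(t)}(t\Phi_t^*f) = X$, equivalently $d(t\Phi_t^*f) = -\iota_{JX}w(t)$.

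The remaining identity is
\begin{equation*}
\iota_{JX}\bigl(i\partial\bar\partial\varphi(t)\bigr) = -\frac{1}{2}d(X\cdot\varphi(t)),
\end{equation*}
which I would prove in local holomorphic coordinates using the two symmetries at our disposal: $X$ is real-holomorphic, so writing $X = X^j\partial_j + \overline{X^j}\partial_{\bar j}$, each $X^j$ is holomorphic; and $JX\cdot\varphi = 0$ by Lemma \ref{JX varphi=0}. A direct contraction, using $JX(z^j) = iX^j$ and $JX(\bar z^k) = -i\overline{X^k}$, yields
\begin{equation*}
\iota_{JX}\bigl(i\partial\bar\partial\varphi\bigr) = -X^j\varphi_{j\bar k}\,d\bar z^k - \overline{X^k}\varphi_{j\bar k}\,dz^j.
\end{equation*}
The hypothesis $JX\cdot\varphi = 0$ is equivalent to $X^j\varphi_j = \overline{X^j}\varphi_{\bar j}$, each then equal to $\frac{1}{2}X\cdot\varphi$. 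Combined with the holomorphicity of $X^j$, this gives $\partial_{\bar k}(X^j\varphi_j) = X^j\varphi_{j\bar k}$ and $\partial_j(\overline{X^k}\varphi_{\bar k}) = \overline{X^k}\varphi_{j\bar k}$, so the right-hand side collapses to $-\bar\partial(\frac{1}{2}X\cdot\varphi) - \partial(\frac{1}{2}X\cdot\varphi) = -\frac{1}{2}d(X\cdot\varphi)$, as required.

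The argument is short and essentially mechanical once the Kähler reformulation is in place; there is no real analytical obstacle. The genuine inputs are the real-holomorphicity of $X$ and the invariance $JX\cdot\varphi = 0$, whose role is precisely to produce the correction term $\frac{1}{2}X\cdot\varphi$ that compensates for the passage from $w(t)$ to $w_\varphi(t)$.
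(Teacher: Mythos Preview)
Your proof is correct and follows essentially the same approach as the paper: both arguments reduce to a local holomorphic-coordinate computation using the real-holomorphicity of $X$ and the invariance $JX\cdot\varphi=0$ to identify $X^{1,0}\cdot\varphi=\frac{1}{2}X\cdot\varphi$. The only cosmetic difference is that the paper verifies the equivalent condition $\bar\partial_{\bar j}\bigl(t\Phi_t^*f+\frac{1}{2}X\cdot\varphi\bigr)=g_\varphi(t)_{i\bar j}X^i$ directly, whereas you phrase it via $dh=-\iota_{JX}w_\varphi$ and split into the $w(t)$ and $i\partial\bar\partial\varphi$ pieces; the underlying computation is the same.
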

\begin{proof}
    Take local holomorphic coordinate of $M$, since $X$ is real holomorphic and $JX\cdot\varphi=0$, $X^{1,0}\cdot\varphi=\frac{1}{2}X\cdot\varphi$, we have
    \begin{equation*}
        \begin{aligned}
            \bar{\partial}_{\bar{j}}(t\Phi_t^*f+\frac{1}{2}X\cdot\varphi(t))&=\bar{\partial}_{\bar{j}}(t\Phi_t^*f)+\bar{\partial}_{\bar{j}}(X^{1,0}\cdot\varphi(t))\\
            &=g(t)_{i\bar{j}}X^{i}+(X^{1,0})^i\partial_i\bar{\partial}_{\bar{j}}\varphi(t)\\
            &=g(t)_{i\bar{j}}X^{i}+X^{i}\partial_i\bar{\partial}_{\bar{j}}\varphi(t)\\
            &=g_{\varphi}(t)_{i\bar{j}}X^i.
        \end{aligned}
    \end{equation*}
    Hence $t\Phi_t^*f+\frac{1}{2}X\cdot\varphi(t)$ is a Hamiltonian potential of $X$ with respect to $g_{\varphi}(t)$.
\end{proof}
\textbf{For convenience we define $f_t(x,t):=\Phi_t^*f(x),f_\varphi(x,t):=\Phi_t^*f(x)+\frac{1}{2t}X\cdot\varphi(x,t)$ for $(x,t)\in M\times(0,T)$.}
\begin{corollary}
For $\lambda>0$ as in Proposition \ref{consequence of taylor expansion}, there exists a constant $D>0$ independent of time such that for all $(x,t)\in\Omega_\lambda$:
    \begin{equation}\label{equivalence of potential formula}
        \begin{split}
            &\left|f_t(x,t)-\frac{r(x)^2}{2t}\right|\le D,\\
            &\left|f_\varphi(x,t)-\frac{r(x)^2}{2t}\right|\le D.\\
        \end{split}
    \end{equation}
\end{corollary}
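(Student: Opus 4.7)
The plan is to reduce both inequalities to a single asymptotic statement about the static soliton potential, namely that $f(y) = r(y)^2/2 + O(1)$ in the asymptotic region of $M$, and then to transport this via the biholomorphism $\Phi_t$; the bound for $f_\varphi$ will follow by adding the polynomial decay correction already proved.

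First, I would show that there exists a constant $D_1 > 0$ such that $|f(y) - r(y)^2/2| \le D_1$ for all $y \in M \setminus E$ with $r(y)$ sufficiently large. The normalized soliton identity $|\partial f|_g^2 + R_w = f$ of Lemma \ref{soliton equalities}, combined with $\nabla^g f = X$ (so that $|\partial f|_g^2 = \tfrac{1}{2}|X|_g^2$), rewrites as $f = \tfrac{1}{2}|X|_g^2 + R_w$. By Theorem \ref{Theorem of CDS}, $d\pi(X) = r\partial_r$ and $|\pi_* g - g_0|_{g_0} = O(r^{-2})$, so $|X|_{g_0}^2 = r^2$ and $\big||X|_g^2 - |X|_{g_0}^2\big| \le |g - \pi^* g_0|_{g_0}\,|X|_{g_0}^2 = O(1)$. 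Combined with $R_w = O(r^{-2})$, this gives $f = r^2/2 + O(1)$ at spatial infinity, as claimed.

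Next, I would transport the estimate via $\Phi_t$. Since $d\pi(X) = r\partial_r$, the flow of $X$ acts on $M \setminus E$ as the radial scaling $(r, s) \mapsto (e^\tau r, s)$ under $\pi$ (the flow preserves $E$ by \cite[Lemma 2.6]{Conlon-Deruelle-Sun}), so from $\Phi_t = \Phi^X_{-\frac{1}{2}\log t}$ one obtains $r(\Phi_t(x)) = r(x)/\sqrt{t}$. After possibly enlarging $\lambda$ once, which is harmless since all previous polynomial decay statements persist for larger $\lambda$, one may assume $(x, t) \in \Omega_\lambda$ forces $r(\Phi_t(x)) = r(x)/\sqrt{t}$ to lie in the region where the first step applies. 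Therefore
\[
\left| f_t(x,t) - \frac{r(x)^2}{2t} \right| = \left| f(\Phi_t(x)) - \frac{r(\Phi_t(x))^2}{2} \right| \le D_1.
\]

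Finally, for the bound on $f_\varphi$, the polynomial decay estimate \eqref{polynomial decay formula} applied with $k = 0$ gives $|X \cdot \varphi(x, t)| \le C_0\, t$ on $\Omega_\lambda$, hence $\tfrac{1}{2t} |X \cdot \varphi(x, t)| \le C_0/2$. Since $f_\varphi = f_t + \tfrac{1}{2t}\, X \cdot \varphi$ by definition, the triangle inequality yields the second estimate with $D := D_1 + C_0/2$. No serious obstacle is expected: the only point requiring attention is ensuring that $\Phi_t(x)$ lands in the region where the static bound $|f - r^2/2| \le D_1$ holds, which is precisely what the parabolic condition $(x,t) \in \Omega_\lambda$ provides, possibly after a harmless one-time enlargement of $\lambda$.
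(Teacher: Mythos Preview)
Your proposal is correct and follows essentially the same outline as the paper: both use the normalized soliton identity $f=\tfrac{1}{2}|X|_g^2+R_w$ together with the boundedness of $R_w$, and both finish $f_\varphi$ via the polynomial decay of $t^{-1}X\cdot\varphi$ from \eqref{polynomial decay formula}. The one organizational difference is in how you pass from $|X|^2$ to $r^2$. The paper works with the time-dependent quantity $|X|^2_{g(t)}$ and bounds $\big||X|^2_{g(t)}-|X|^2_{g_0}\big|\le Dt$ by integrating the Ricci flow equation along $[0,t]$ using the quadratic curvature decay. You instead work with the static quantity $|X|^2_g$, bound $\big||X|^2_g-|X|^2_{g_0}\big|=O(1)$ directly from the asymptotic estimate $|\pi_*g-g_0|_{g_0}=O(r^{-2})$ of Theorem~\ref{Theorem of CDS}, and then transport via the explicit scaling $r\circ\Phi_t=r/\sqrt t$. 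Your route is slightly more direct since it avoids the time integration; note also that because the bound $|\pi_*g-g_0|_{g_0}\le C_0r^{-2}$ from Theorem~\ref{Theorem of CDS} holds on all of $M\setminus E$, your static estimate $|f-r^2/2|\le D_1$ in fact holds everywhere off $E$, so the enlargement of $\lambda$ you flag is not actually needed.
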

\begin{proof}
In this proof $D$ is a constant independent of time that would vary from line to line.

    From Lemma \ref{soliton equalities}, we deduce that
    \begin{equation*}
        \Phi_t^*f=\Phi^*_t(|\partial f|^2_g+R_w)=|\partial \Phi_t^*f|^2_{\Phi_t^*g}+\Phi^*_tR_w=t^{-1}|\partial t\Phi_t^*f|^2_{g(t)}+\Phi^*_tR_w=t^{-1}\frac{1}{2}|X|^2_{g(t)}+\Phi^*_tR_w.
    \end{equation*}
    Since $R_w$ is globally bounded, we bound it by $D$, and we notice that in $\Omega_\lambda$, 
    \begin{equation*}
        \begin{aligned}
            \left||X|^2_{g(t)}-|X|^2_{g_0}\right|&\le2\int_0^t |\Ric(g(s))|_{g_0}|X|^2_{g_0}ds
           \le Dt.
        \end{aligned}
    \end{equation*}
    Hence we get $|\Phi_t^*f(x)-\frac{r(x)^2}{2t}|\le D$, and the second inequality comes from the polynomial decay of $t^{-1}X\cdot\varphi$ stated in \eqref{polynomial decay formula}.
\end{proof}
\begin{prop}\label{evolution equation of potentials}
  Hamiltonian potentials $f_t$ and $f_\varphi$ enjoy the following parabolic evolution function:
    \begin{equation}
        \begin{split}
        &\left(\frac{\partial}{\partial t}-\Delta_{w(t)}\right)f_t(x,t)=-\frac{n+f_t(x,t)}{t},\quad \textrm{for all $(x,t)\in M\times (0,T)$},\\
            &\left(\frac{\partial}{\partial t}-\Delta_{w_\varphi(t)}\right)f_\varphi(x,t)=-\frac{n+f_\varphi(x,t)}{t},\quad \textrm{for all $(x,t)\in M\times (0,T)$}.\\
        \end{split}
    \end{equation}
\end{prop}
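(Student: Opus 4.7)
The plan is to verify both identities directly from the definitions, leveraging (i) the pullback structure $f_t=\Phi_t^{*}f$ combined with the normalized soliton identities of Lemma \ref{soliton equalities}, and (ii) the Hamiltonian relation $\bar\partial_{\bar{j}}(tf_\varphi)=g_\varphi(t)_{i\bar{j}}X^{i}$ established in the preceding lemma.

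First I would treat $f_t$. Since $\Phi_t=\Phi^{X}_{-\frac{1}{2}\log t}$, differentiating in $t$ yields $\partial_t f_t=-\frac{1}{2t}\Phi_t^{*}(X\cdot f)=-\frac{1}{2t}\Phi_t^{*}|X|_g^2$. Combining the normalization $|X|_g^2=2(f-R_w)$ from Lemma \ref{soliton equalities} with the rescaling $\Phi_t^{*}R_w=tR_{w(t)}$ (a direct consequence of $g(t)=t\Phi_t^{*}g$) gives $\partial_t f_t=-\frac{f_t}{t}+R_{w(t)}$. On the spatial side, the scaling-pullback invariance of the K\"ahler Laplacian produces $\Delta_{w(t)}(tf_t)=\Phi_t^{*}(\Delta_w f)=\Phi_t^{*}(n+R_w)=n+tR_{w(t)}$, so $\Delta_{w(t)}f_t=\tfrac{n}{t}+R_{w(t)}$; taking the difference yields the first identity.

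For the second identity, my approach is to compute $\Delta_{w_\varphi(t)}(tf_\varphi)$ directly from the Hamiltonian relation. Applying $\partial_k$ to $\bar\partial_{\bar{j}}(tf_\varphi)=g_\varphi(t)_{i\bar{j}}X^{i}$, contracting with $g_\varphi(t)^{k\bar{j}}$, and invoking the K\"ahler symmetry $\partial_{k}g_\varphi(t)_{i\bar{j}}=\partial_{i}g_\varphi(t)_{k\bar{j}}$ produces $\Delta_{w_\varphi(t)}(tf_\varphi)=X^{i}\partial_{i}\log\det g_\varphi(t)+\partial_{i}X^{i}$; the identical calculation for $g(t)$ with $tf_t$ yields $\Delta_{w(t)}(tf_t)=X^{i}\partial_{i}\log\det g(t)+\partial_{i}X^{i}$. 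Subtracting, the holomorphic-divergence term $\partial_{i}X^{i}$ cancels because $X$ is the same vector field in both expressions, and then the complex Monge-Amp\`ere equation $\log(w_\varphi(t)^n/w(t)^n)=\dot\varphi$ together with $X^{i}\partial_{i}\dot\varphi=\frac{1}{2}X\cdot\dot\varphi$ (valid because $JX\cdot\dot\varphi=0$, obtained by differentiating $JX\cdot\varphi=0$ in $t$) gives $\Delta_{w_\varphi(t)}f_\varphi=\tfrac{n}{t}+R_{w(t)}+\tfrac{1}{2t}X\cdot\dot\varphi$.

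To close the argument I differentiate $f_\varphi=f_t+\frac{1}{2t}X\cdot\varphi$ in time to obtain $\partial_t f_\varphi=\partial_t f_t-\frac{1}{2t^2}X\cdot\varphi+\frac{1}{2t}X\cdot\dot\varphi$, then take the difference with the Laplacian expression just derived. The two $\tfrac{1}{2t}X\cdot\dot\varphi$ contributions cancel, the $R_{w(t)}$ terms cancel, and what remains regroups via the definition of $f_\varphi$ as exactly $-\tfrac{f_\varphi+n}{t}$. The only step requiring genuine care is the derivation of the contraction identity $\Delta_{w_\varphi(t)}(tf_\varphi)=X^{i}\partial_{i}\log\det g_\varphi(t)+\partial_{i}X^{i}$ and the cancellation of $\partial_{i}X^{i}$ across the two metrics; once that observation is in place, everything else reduces to bookkeeping using the Monge-Amp\`ere equation and the $JX$-invariance of $\varphi$.
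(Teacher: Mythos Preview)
Your proof is correct and follows essentially the same route as the paper's. For $f_t$ the arguments are identical up to bookkeeping; for $f_\varphi$ both proofs hinge on the relation $\Delta_{w_\varphi(t)}(tf_\varphi)-\Delta_{w(t)}(tf_t)=\tfrac12 X\cdot\dot\varphi$, which the paper obtains by writing $X\cdot\dot\varphi=\tr_{w_\varphi(t)}(\mathcal{L}_X w_\varphi(t))-\tr_{w(t)}(\mathcal{L}_X w(t))$ and recognizing these traces as Laplacians of the Hamiltonian potentials, whereas you derive the same identity by a direct local-coordinate computation from $\bar\partial_{\bar j}(tf_\varphi)=g_\varphi(t)_{i\bar j}X^i$ and cancel the holomorphic divergence $\partial_i X^i$.
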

\begin{proof}
    By soliton identities in Lemma \ref{soliton equalities}:
   \begin{equation*}
       \begin{aligned}
           \frac{\partial}{\partial t} \left(tf_t(x,t)\right)&=f_t(x,t)-\frac{1}{2}X(f_t)(x,t)\\
        &=f(\Phi_t(x))-|X|^2_{g}(\Phi(x))\\
        &=R_{w}(\Phi_t(x))\\
        &=(\Delta_{w}f)(\Phi_t(x))-n\\
        &=\Delta_{\Phi_t^*w}f_t(x,t)-n\\
        &=t\Delta_{w(t)}f_t(x,t)-n.\\
       \end{aligned}
   \end{equation*}
        This proves the first expected result. Moreover,
         \begin{equation*}
             \begin{aligned}
                 \frac{\partial}{\partial t} f_\varphi&=\frac{\partial}{\partial t} f_t-\frac{1}{2t^2}X\cdot\varphi+\frac{1}{2t}X\cdot\dot{\varphi}\\
       &=\frac{\partial}{\partial t} f_t-\frac{1}{2t^2}X\cdot\varphi+\frac{1}{2t}X\cdot \log\frac{w_\varphi(t)^n}{w(t)^n}\\
       &=\frac{\partial}{\partial t} f_t-\frac{1}{2t^2}X\cdot\varphi+\frac{1}{2t}\left(\tr_{w_\varphi(t)}(\mathcal{L}_Xw_\varphi(t))-\tr_{w(t)}(\mathcal{L}_Xw(t))\right)\\
       &=\frac{\partial}{\partial t} f_t-\frac{1}{2t^2}X\cdot\varphi+\frac{1}{t}(\Delta_{w_\varphi(t)}tf_\varphi-\Delta_{w(t)}tf_t)\\
       &=\frac{\partial}{\partial t}f_t-\Delta_{w(t)}f_t-\frac{1}{2t^2}X\cdot\varphi+\Delta_{w_\varphi(t)}f_\varphi.
             \end{aligned}
         \end{equation*}
         By the first result that we have proved, $\left(\frac{\partial}{\partial t}-\Delta_{w(t)}\right)f_t(x,t)=-\frac{n+f_t(x,t)}{t}$, we have
         \begin{equation*}
            \frac{\partial}{\partial t} f_\varphi= -\frac{n+f_t(x,t)}{t}-\frac{1}{2t^2}X\cdot\varphi+\Delta_{w_\varphi(t)}f_\varphi=-\frac{n+f_\varphi(x,t)}{t}+\Delta_{w_\varphi(t)}f_\varphi.
         \end{equation*}
         as expected.
    \end{proof}
\subsection{Maximum principle}\label{section max principle}
Since $u$ decays at infinity faster than any polynomial in $\frac{t}{r^2}$, as established by \eqref{polynomial decay formula}, we are led to believe that the decay of $ u $ should in fact be exponential. In this section, we rigorously confirm this intuition by applying the maximum principle in combination with a carefully constructed auxiliary function. Specifically, we demonstrate that
\begin{equation*}
    u(x) =O\left(t e^{-\frac{r^2}{2t}} \left(\frac{r^2}{2t}\right)^{-n-1} \right),
\end{equation*}
as $\frac{r^2}{t} \to \infty$. Moreover, the analysis shows that the exponent $n+1$ in the polynomial factor is essentially optimal.
\begin{prop}[Barrier functions]\label{barrier function}
For $\lambda>0$ as in Proposition \ref{consequence of taylor expansion}, there exist constants $\lambda'>\lambda,\beta>0$ independent of time such that 
\begin{equation*}
    \begin{split}
        &v(x,t):=tf_\varphi^{-n-1}e^{-f_\varphi-\beta f_\varphi^{-1}}(x,t)\quad \textrm{and}\\
        &\bar{v}(x,t):=tf_t^{-n-1}e^{-f_t-\beta f_t^{-1}}(x,t)
    \end{split}
\end{equation*} are well defined on $\Omega_{\lambda'}$ and satisfy
    \begin{equation}
        \begin{split}
            &\left(\frac{\partial}{\partial t}-\Delta_{w_\varphi(t)}\right)v(x,t)\ge 0, \quad \forall (x,t)\in\Omega_{\lambda'},\ t\neq0,\\
            &\left(\frac{\partial}{\partial t}-\Delta_{w(t)}\right)\bar{v}(x,t)\ge 0, \quad \forall (x,t)\in\Omega_{\lambda'},\ t\neq0.
        \end{split}
    \end{equation}
\end{prop}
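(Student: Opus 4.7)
My plan is to express $v = tF(f_\varphi)$ with $F(s) := s^{-n-1}e^{-s-\beta/s}$, apply the chain rule for the K\"ahler Laplacian, and use Proposition \ref{evolution equation of potentials} to reduce the inequality $(\partial_t - \Delta_{w_\varphi(t)})v \geq 0$ to a numerical inequality involving only $f_\varphi$ and $t|\partial f_\varphi|^2_{g_\varphi(t)}$; tuning the parameters $\beta$ and $\lambda'$ will then close the argument. Setting $G(s) := F'(s)/F(s) = -1 - (n+1)/s + \beta/s^2$, so that $F''/F = G^2 + G'$, a direct computation gives
\[
\left(\frac{\partial}{\partial t} - \Delta_{w_\varphi(t)}\right) v = F(f_\varphi)\,\Theta(x,t),
\]
where
\[
\Theta(x,t) := 1 - G(f_\varphi)(n+f_\varphi) - (G^2+G')(f_\varphi)\cdot t\,|\partial f_\varphi|^2_{g_\varphi(t)}.
\]
Since $F>0$, it suffices to show $\Theta \geq 0$ on $\Omega_{\lambda'}$.

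Next I would estimate $t|\partial f_\varphi|^2_{g_\varphi(t)}$. Because $tf_\varphi$ is a Hamiltonian potential for $X$ with respect to $g_\varphi(t)$ (by the lemma preceding Proposition \ref{evolution equation of potentials}), one gets $|X|^2_{g_\varphi(t)} = 2t^2|\partial f_\varphi|^2_{g_\varphi(t)}$. Combining this with $\Phi_t^*X = X$, $g(t) = t\Phi_t^*g$, the soliton identity $|\partial f|^2_g + R_w = f$ from Lemma \ref{soliton equalities}, and the polynomial decays of $\partial\bar\partial\varphi$ and $X\cdot\varphi$ from Proposition \ref{consequence of taylor expansion} and \eqref{polynomial decay formula} (together with $\partial\bar\partial\varphi(X,X) = \tfrac{1}{2}X\cdot(X\cdot\varphi)$), I obtain on $\Omega_\lambda$
\[
t\,|\partial f_\varphi|^2_{g_\varphi(t)} = f_\varphi - \Phi_t^*R_w(x) + E(x,t),
\]
where the correction $E$ absorbs all $\varphi$-dependent perturbations and decays faster than any polynomial in $f_\varphi^{-1}$.

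Substituting this expansion into $\Theta$ and expanding $G(n+s)$ and $G^2+G'$ as Laurent polynomials in $s^{-1}$, the terms of orders $s$ and $s^0$ cancel, leaving
\[
\Theta = -\Phi_t^*R_w - E + \frac{\beta - 2(n+1)}{s} + O(s^{-2}), \qquad s = f_\varphi.
\]
Since $X = r\partial_r$ at infinity, integrating the flow of $X$ yields $r(\Phi_t(x)) = t^{-1/2}r(x)$, so on $\Omega_{\lambda'}$ one has $r(\Phi_t(x))^2 > \lambda'$; the quadratic decay of $R_w$ from Theorem \ref{Theorem of CDS} together with \eqref{equivalence of potential formula} then gives $|\Phi_t^*R_w(x,t)| \leq C_0/s$ with $C_0$ independent of $\lambda'$. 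Therefore $\Theta \geq (\beta - 2(n+1) - C_0)/s + O(s^{-2})$; choosing first $\beta > 2(n+1) + C_0$ and then $\lambda'$ large enough that $s \geq \lambda'/2 - D$ dominates the $O(s^{-2})$ correction yields $\Theta \geq 0$ throughout $\Omega_{\lambda'}$.

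The argument for $\bar v$ is entirely analogous but cleaner: the identity $t|\partial f_t|^2_{g(t)} = f_t - \Phi_t^*R_w$ holds \emph{exactly}, with no $\varphi$-correction term, so the same computation with $f_t$ in place of $f_\varphi$ and $E \equiv 0$ gives $(\partial_t - \Delta_{w(t)}) \bar v \geq 0$ on $\Omega_{\lambda'}$ for the same choice of $\beta$ and $\lambda'$. The main obstacle will be establishing the asymptotic formula for $t|\partial f_\varphi|^2_{g_\varphi(t)}$ with $|E|$ super-polynomially small uniformly in $t$: one must carefully combine the perturbation $|X|^2_{g_\varphi(t)} - |X|^2_{g(t)} = \partial\bar\partial\varphi(X,X)\ast(\text{metric factors})$ with the polynomial bounds of Proposition \ref{consequence of taylor expansion} and \eqref{polynomial decay formula}, so that every $\varphi$-induced contribution is genuinely absorbed into $E$ rather than polluting the leading expansion of $\Theta$.
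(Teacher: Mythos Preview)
Your approach is essentially the same as the paper's: the paper decomposes $v=v_1v_2$ with $v_1=tf_\varphi^{-n-1}e^{-f_\varphi}$ and $v_2=e^{-\beta f_\varphi^{-1}}$ and computes the heat operator on each factor, while you use the equivalent logarithmic-derivative packaging $G=F'/F$, $F''/F=G^2+G'$; both routes land on the same leading contribution $(\beta-2(n+1))/f_\varphi$ after the order-$s$ and order-$1$ cancellations, both estimate the correction $t|\partial f_\varphi|^2_{w_\varphi}-f_\varphi=-\Phi_t^*R_w+E$ via the soliton identity together with the polynomial decays of $X\cdot\varphi$ and $\partial\bar\partial\varphi$, and both choose $\beta>2(n+1)+C_0$ first and then $\lambda'$ large. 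One harmless slip: in your displayed Laurent expansion the sign in front of $\Phi_t^*R_w$ should be $+$ rather than $-$ (since $-(G^2+G')\cdot(-\Phi_t^*R_w)=+\Phi_t^*R_w+O(s^{-2})$), but as you immediately bound $|\Phi_t^*R_w|\le C_0/s$ this does not affect the argument.
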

\begin{proof}
    For convenience, we omit the time dependence of $w_\varphi(t)$, we simply denote it by $w_\varphi$. 
    
Firstly take $\lambda_1=\max\{\lambda,4D+2\}$ with $D$ in \eqref{equivalence of potential formula}, $\Omega_{\lambda_1}$ is a subset of the above $\Omega_\lambda$, estimates \eqref{polynomial estimate of Hessian and gradient of Hessian} \eqref{polynomial decay formula} \eqref{rough estimates of u} and \eqref{equivalence of potential formula} still hold in $\Omega_{\lambda_1}$. The choice of $\lambda_1$ ensures that in $\Omega_{\lambda_1}$
\begin{equation*}
 \begin{split}
       &\frac{r(x)^2}{4t}\le f_t(x,t)\le \frac{r(x)^2}{t},\\
         &\frac{r(x)^2}{4t}\le f_\varphi(x,t)\le \frac{r(x)^2}{t}.\\
 \end{split}
\end{equation*}

    Consequentially, $v$ is well defined on $\Omega_{\lambda_1}$ and if we fix $x\in M$ such that $r(x)\neq0$, $\frac{v(x,t)}{t}$ converges to 0 locally when $t$ tends to 0.

Recall the evolution equation of $f_\varphi$ (Proposition \ref{evolution equation of potentials}) is
    \begin{equation*}
        \left(\frac{\partial}{\partial t}-\Delta_{w_\varphi}\right)f_\varphi=-\frac{n+f_\varphi}{t}.
    \end{equation*}
    Now we compute the evolution equation of $v$ at point $(x,t)\in\Omega_{\lambda_1}.$ We rename that $v_1:=tf_\varphi^{-n-1}e^{-f_\varphi},v_2:=e^{-\beta f_\varphi^{-1}}$ and compute as follows:
    \begin{equation*}
        \begin{aligned}
         \frac{\partial}{\partial t}v_1&=-v_1(1+(n+1) f_\varphi^{-1})\frac{\partial}{\partial t} f_\varphi +\frac{v_1}{t},\\
            \Delta_{w_\varphi}v_1&=\Delta_{w_\varphi}(e^{-f_\varphi})f_\varphi^{-n-1}+\Delta_{w_\varphi}(f_\varphi^{-n-1})e^{-f_\varphi}+2\Re(<\partial e^{-f_\varphi},\bar{\partial}f_\varphi^{-n-1}>_{w_\varphi})\\
        &=v_1\left[|\partial f_\varphi|_{w_\varphi}^2-(1+(n+1)f_\varphi^{-1})\Delta_{w_\varphi}f_\varphi+(n+2)(n+1)|\partial f_\varphi|_{w_\varphi}^2f_\varphi^{-2}+2(n+1)|\partial f_\varphi|_{w_\varphi}^2f_{\varphi}^{-1}\right].
        \end{aligned}
    \end{equation*}
    Hence
    \begin{equation}\label{v_1}
        \left(\frac{\partial}{\partial t}-\Delta_{w_\varphi}\right)v_1=v_1\left[\frac{1}{t}+(1+(n+1)f_\varphi^{-1}))\frac{n+f_\varphi}{t}-|\partial f_\varphi|_{w_\varphi}^2(1+(n+2)(n+1)f_\varphi^{-2}+2(n+1)f_\varphi^{-1})\right].
    \end{equation}
    Recall that $|\partial f_\varphi|_{w_\varphi}^2(x,t)=\frac{|X|^2_{w_\varphi}(x,t)}{2t^2}$, and locally
    \begin{equation*}
        \begin{aligned}
            |X|^2_{w_\varphi}&=|X|^2_{w(t)}+2\partial_{i}\partial_{\bar{j}}\varphi X^iX^{\bar{j}}\\
        &=2tf_t-2tR_w\circ\Phi_t+2\partial_{i}\partial_{\bar{j}}\varphi X^iX^{\bar{j}}\\
        &=2tf_\varphi-X\cdot\varphi-2tR_w\circ\Phi_t+2\partial_{i}\partial_{\bar{j}}\varphi X^iX^{\bar{j}}\\
        &:=2tf_\varphi+tS.\\
        \end{aligned}
    \end{equation*}
    Here $S:=-\frac{1}{t}X\cdot\varphi-2R_w\circ\Phi_t+\frac{2}{t}\partial_{i}\partial_{\bar{j}}\varphi X^iX^{\bar{j}}$ and the second line is ensured by soliton identities (Lemma \ref{soliton equalities}). Hence
    \begin{equation*}
        |\partial f_\varphi|_{w_\varphi}^2=\frac{f_\varphi}{t}+\frac{S}{2t}.
    \end{equation*}
    Therefore we rewrite \eqref{v_1} as follows:
    \begin{equation}\label{v_1 with S}
        \begin{aligned}
            \left(\frac{\partial}{\partial t}-\Delta_{w_\varphi}\right)v_1&=v_1\left[\frac{1}{t}+(1+(n+1)f_\varphi^{-1}))\frac{n+f_\varphi}{t}-\frac{f_\varphi}{t}(1+(n+2)(n+1)f_\varphi^{-2}+2(n+1)f_\varphi^{-1})\right]\\
        &\quad-v_1\left[\frac{S}{2t}(1+(n+2)(n+1)f_\varphi^{-2}+2(n+1)f_\varphi^{-1})\right]\\
        &=v_1\left[-\frac{2n+2}{tf_\varphi}-\frac{S}{2t}(1+(n+2)(n+1)f_\varphi^{-2}+2(n+1)f_\varphi^{-1})\right].\\
        \end{aligned}
    \end{equation}
    Now we compute the evolution equation of $v_2$ at the point $(x,t)\in\Omega_{\lambda_1}$:
    \begin{equation}\label{v_2}
    \begin{aligned}
        \frac{\partial}{\partial t} v_2&=\beta f_\varphi^{-2}v_2\frac{\partial}{\partial t} f_\varphi,\\
         \Delta_{w_\varphi}v_2&=v_2(\beta f_\varphi^{-2}\Delta_{w_\varphi}f_\varphi+\beta^2f_{\varphi}^{-4}|\partial f_\varphi|_{w_\varphi}^2-2\beta|\partial f_\varphi|_{w_\varphi}^2f_\varphi^{-3}),\\
         \left(\frac{\partial}{\partial t}-\Delta_{w_\varphi}\right)v_2&=v_2\left[-\beta \frac{n+f_\varphi}{tf_\varphi^{2}}-\beta^2f_\varphi^{-4}|\partial f_\varphi|_{w_\varphi}^2+2\beta|\partial f_\varphi|_{w_\varphi}^2f_\varphi^{-3}\right]\\
         &=v_2\left[-\frac{\beta}{tf_\varphi}+\frac{(2-n)\beta}{tf_\varphi^2}-\frac{\beta^2}{tf_\varphi^3}+\frac{S}{2t}(2\beta f_\varphi^{-3}-\beta^2f_\varphi^{-4})\right].
    \end{aligned}
    \end{equation}
    The scalar product of $\partial v_1$ and $\bar{\partial}v_2$ is given by
   \begin{equation}\label{sclar product}
       \begin{aligned}
           <\partial v_1,\bar{\partial} v_2>_{w_\varphi}&=-\beta(1+(n+1)f_\varphi^{-1})f_\varphi^{-2}|\partial f_\varphi|_{w_\varphi}^2v_1v_2\\
           &=v_1v_2\left[-\frac{\beta}{tf_\varphi}-\frac{(n+1)\beta}{tf_\varphi^2}+\frac{S}{2t}(-\beta f_\varphi^{-2}-(n+1)\beta f_\varphi^{-3})\right].
       \end{aligned}
   \end{equation}
   By definition $v=v_1v_2$, combine \eqref{v_1 with S} \eqref{v_2} and \eqref{sclar product}, we have
   \begin{equation}\label{v}
       \begin{aligned}
           \left(\frac{\partial}{\partial t}-\Delta_{w_\varphi}\right)v&=v_1\left(\frac{\partial}{\partial t}-\Delta_{w_\varphi}\right)v_2+v_2\left(\frac{\partial}{\partial t}-\Delta_{w_\varphi}\right)v_1-2\Re(<\partial v_1,\bar{\partial} v_2>_{w_\varphi})\\
        &=v\left[\frac{\beta-2n-2}{tf_\varphi}+\frac{(n+4)\beta}{tf_\varphi^2}-\frac{\beta^2}{tf_\varphi^3}\right]\\
        &\quad+v\frac{S}{2t}\left[\frac{2(n+2)\beta}{f_\varphi^3}+\frac{2\beta}{f_\varphi^2}-\frac{\beta^2}{f_\varphi^4}-1-\frac{(n+2)(n+1)}{f_\varphi^2}-\frac{2n+2}{f_\varphi}\right].\\
       \end{aligned}
   \end{equation}
    Now we estimate $\frac{S}{2t}$, thanks to \eqref{polynomial estimate of Hessian and gradient of Hessian} and \eqref{polynomial decay formula},
    \begin{align*}
        |S(x,t)|&=\left|-2R_w(\Phi_t(x))-\frac{X(\varphi)(x,t)}{t}+2\frac{\partial_i\bar{\partial}_{\bar{j}}\varphi X^iX^{\bar{j}}(x,t)}{t}\right|\\
        &\le 2|R_w(\Phi_t(x))|+C_1\frac{t}{r(x)^2}+B_2\frac{t^2}{r(x)^4}\frac{r(x)^2}{t}\\
        &\le 2|R_w(\Phi_t(x))|+(C_1+B_2)\frac{t}{r(x)^2}.\\
    \end{align*}
    Since scalar curvature decays quadratically, by \cite[Theorem 2.19]{Conlon-Deruelle-Sun} there exists a constant $C$ such that
    \begin{equation*}
        |R_w(y)|\le \frac{C}{4f(y)},\quad \forall y\in M.
    \end{equation*}
    Hence $|R_w(\Phi_t(x))|\le\frac{C}{4f_t(x)}$. Moreover for $(x,t)\in\Omega_{\lambda_1}$, $f_t(x)>\frac{r(x)^2}{4t}$, so that $|R_w(\Phi_t(x))|\le C\frac{t}{r(x)^2}$.
   
   Let $2C':=2C+B_2+C_1$, we have on $\Omega_{\lambda_1}$
    \begin{equation*}
        \left|\frac{S}{2t}(x,t)\right|\le C'\frac{1}{r(x)^2}.
    \end{equation*}
    Notice in $\Omega_{\lambda_1}$, $f_\varphi(x,t)\le \frac{r(x)^2}{t},f_\varphi(x,t)\ge \frac{1}{4}\frac{r(x)^2}{t}$.
    Take $\beta=2n+2+(C'+1)$, from \eqref{v} we have
    \begin{equation*}
        \begin{aligned}
            \left(\frac{\partial}{\partial t}-\Delta_{w_\varphi}\right)v&\ge v\left[\frac{\beta-2n-2}{r(x)^2}-\frac{\beta^2}{tf_\varphi^3}-\frac{C'}{r(x)^2}\left(\frac{2(n+2)\beta}{f_\varphi^3}+\frac{2\beta}{f_\varphi^2}+\frac{\beta^2}{f_\varphi^4}+1+\frac{(n+2)(n+1)}{f_\varphi^2}+\frac{2n+2}{f_\varphi}\right)\right]\\ 
        &\ge v\left[\frac{{\beta-2n-2}-C'}{r(x)^2}-\frac{\beta^2}{tf_\varphi^3}-\frac{C'}{r(x)^2}\left(\frac{2(n+2)\beta}{f_\varphi^3}+\frac{2\beta}{f_\varphi^2}+\frac{\beta^2}{f_\varphi^4}+\frac{(n+2)(n+1)}{f_\varphi^2}+\frac{2n+2}{f_\varphi}\right) \right]\\
        &\ge v\left[\frac{1}{r(x)^2}-\frac{\beta^2}{tf_\varphi^3}-\frac{C'}{r(x)^2}\left(\frac{2(n+2)\beta}{f_\varphi^3}+\frac{2\beta}{f_\varphi^2}+\frac{\beta^2}{f_\varphi^4}+\frac{(n+2)(n+1)}{f_\varphi^2}+\frac{2n+2}{f_\varphi}\right) \right]\\
        &\ge v\left[\frac{1}{r(x)^2}-\frac{4}{r(x)^2}\frac{\beta^2}{f_\varphi^2}-\frac{C'}{r(x)^2}\left(\frac{2(n+2)\beta}{f_\varphi^3}+\frac{2\beta}{f_\varphi^2}+\frac{\beta^2}{f_\varphi^4}+\frac{(n+2)(n+1)}{f_\varphi^2}+\frac{2n+2}{f_\varphi}\right) \right]\\
        &=\frac{v}{r(x)^2}\left[1-\frac{1}{f_\varphi}\left(\frac{4\beta^2+2C'\beta+C'(n+2)(n+1)}{f_\varphi}+\frac{2C'(n+2)\beta}{f_\varphi^2}+\frac{C'\beta^2}{f_\varphi
        ^3}+C'(2n+2)\right)\right].\\
        \end{aligned}
    \end{equation*}
    In $\Omega_{\lambda_1}$ $f_\varphi(x,t)\ge \frac{1}{4}\frac{r(x)^2}{t}.$ Therefore there exists a constant $\lambda'\ge\lambda_1$ large enough such that for any $(x',t')\in\Omega_{\lambda'}$, we have $f_\varphi(x',t')\ge \frac{1}{4}\frac{r(x')^2}{t'}\ge\frac{1}{4\lambda'}$ and
    \begin{equation*}
        \begin{aligned}
            &\frac{1}{f_\varphi}\left[\frac{4\beta^2+2C'\beta+C'(n+2)(n+1)}{f_\varphi}+\frac{2C'(n+2)\beta}{f_\varphi^2}+\frac{C'\beta^2}{f_\varphi
        ^3}+C'(2n+2)\right](x',t')\\
        &\le \frac{4}{\lambda'}\left[4\frac{4\beta^2+2C'\beta+C'(n+2)(n+1)}{\lambda'}+16\frac{2C'(n+2)\beta}{\lambda'^2}+64\frac{C'\beta^2}{\lambda'^3}+C'(2n+2)\right]\le 1.\\
        \end{aligned}
    \end{equation*}
    Therefore for $(x,t)\in \Omega_{\lambda'}$ we have
    \begin{equation*}
        \left(\frac{\partial}{\partial t}-\Delta_{w_\varphi(t)}\right)v(x,t)\ge 0.
    \end{equation*}

    The calculation of $\bar{v}$ follows verbatim since $\left(\frac{\partial}{\partial t}-\Delta_{w(t)}\right)f_t(x,t)=-\frac{n+f_t}{t}$.
\end{proof}
\textbf{For convenience we still use $\lambda$ to denote the above $\lambda'$ in Proposition \ref{barrier function},  all the previous estimates in \eqref{polynomial estimate of Hessian and gradient of Hessian} \eqref{polynomial decay formula} \eqref{rough estimates of u} and \eqref{equivalence of potential formula} still hold in $\Omega_{\lambda_1}$ because $\Omega_{\lambda'}$ is a subset of $\Omega_\lambda$.}
\begin{lemma}[Maximum principle]\label{maximum principle}
    Let $g(t)_{t\in (0,T)}$ be a smooth family of Riemannian metrics defined on $M\times (0,T)$. Let $\kappa$ be a continuous function defined on $\{(x,t)\ |\ r(x)^2\ge\lambda t,\ t\neq0\}$ and $\kappa$ is smooth in the interior of $\Omega_\lambda$. Assume that:
    \begin{enumerate}
        \item $\left(\frac{\partial}{\partial t}-\Delta_{g(t)}\right)\kappa(x,t)\le0$ for all $(x,t)$ in the interior of $\Omega_\lambda$.
        \item There exists a constant $C>0$ such that $\kappa(x,t)\le Ct$ for all $(x,t)\in\Omega_\lambda$.
        \item For all $0<T_0<T_1<T$, $\lim_{r(x)\to+\infty}\sup_{t\in [T_0,T_1]} \kappa(x,t)\le 0$.
        \item $\kappa(x,t)\le0$ for all $(x,t)\in \Omega_\lambda$ such that $r(x)^2=\lambda t$.
    \end{enumerate}
    Then $\kappa\le 0$ in $\Omega_\lambda$.
\end{lemma}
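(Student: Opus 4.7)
The plan is to argue by contradiction, combining a standard perturbation that upgrades hypothesis~(i) to a strict parabolic differential inequality, with a localization to a compact space-time cylinder on which the four hypotheses force the maximum of the perturbed function to lie at an interior point. At such a point the strict inequality will be violated.

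\textbf{Perturbation and localization.} Suppose for contradiction that there exists $(x_{0},t_{0})\in\Omega_{\lambda}$ with $\kappa(x_{0},t_{0})=A>0$. For any $\eta\in\bigl(0,\tfrac{A}{2t_{0}}\bigr)$, set $\tilde{\kappa}_{\eta}:=\kappa-\eta t$. Then $\tilde{\kappa}_{\eta}(x_{0},t_{0})>A/2>0$; hypothesis~(ii) gives $\tilde{\kappa}_{\eta}\le (C-\eta)t$; hypothesis~(iv) gives $\tilde{\kappa}_{\eta}\le -\eta t<0$ on the parabolic boundary $\{r^{2}=\lambda t\}$; and, crucially, hypothesis~(i) is strengthened to the strict inequality
\[
(\partial_{t}-\Delta_{g(t)})\tilde{\kappa}_{\eta}\le -\eta<0\qquad\text{on the interior of }\Omega_{\lambda}.
\]
Pick $T_{0}\in(0,t_{0})$ with $(C-\eta)T_{0}<A/4$, so that by (ii), $\tilde{\kappa}_{\eta}<A/4$ on $\{t\le T_{0}\}$. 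By (iii) applied on $[T_{0},t_{0}]$ there exists $R\ge r(x_{0})$ such that $\kappa<A/4$, and hence $\tilde{\kappa}_{\eta}<A/4$, on $\{r(x)\ge R,\ t\in[T_{0},t_{0}]\}$. Define
\[
K^{\star}:=\bigl\{(x,t):\sqrt{\lambda t}\le r(x)\le R,\ T_{0}\le t\le t_{0}\bigr\}\cap\bigl((M\setminus K)\times(0,T)\bigr),
\]
which is compact by properness of the radial function on the conical region $M\setminus K$, and contains $(x_{0},t_{0})$.

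\textbf{Contradiction at the maximizer.} Since $\tilde{\kappa}_{\eta}$ is continuous on $K^{\star}$, it attains its maximum $M^{\star}\ge\tilde{\kappa}_{\eta}(x_{0},t_{0})>A/2>A/4$ at some point $(x^{\star},t^{\star})$. The three bounds above rule out $t^{\star}=T_{0}$, $r(x^{\star})=R$, and $r(x^{\star})^{2}=\lambda t^{\star}$, so $(x^{\star},t^{\star})$ is spatially interior with $T_{0}<t^{\star}\le t_{0}$. At such a point, the spatial maximality yields $\nabla\tilde{\kappa}_{\eta}(x^{\star},t^{\star})=0$ and $\Delta_{g(t^{\star})}\tilde{\kappa}_{\eta}(x^{\star},t^{\star})\le 0$; the time maximality yields $\partial_{t}\tilde{\kappa}_{\eta}(x^{\star},t^{\star})\ge 0$ (equal to $0$ if $t^{\star}<t_{0}$, and nonnegative by a one-sided derivative if $t^{\star}=t_{0}$). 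Hence
\[
(\partial_{t}-\Delta_{g(t)})\tilde{\kappa}_{\eta}(x^{\star},t^{\star})\ge 0,
\]
in direct contradiction with the strict inequality $\le-\eta<0$. This refutes the existence of $(x_{0},t_{0})\in\Omega_{\lambda}$ with $\kappa(x_{0},t_{0})>0$, proving $\kappa\le 0$ on $\Omega_{\lambda}$.

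\textbf{Main obstacle.} The only delicate point is guaranteeing that $K^{\star}$ is genuinely compact and disjoint from $\partial K$, so that no extra part of $\partial K$ enters the analysis as an uncontrolled boundary piece. This follows from the definition of $\Omega_{\lambda}$ combined with the fact that $\lambda$ may be taken large enough to ensure $\{r^{2}\ge\lambda t\}$ sits well inside the conical region $M\setminus K$ on the relevant time range---as it was already chosen to do in Proposition~\ref{barrier function}. Everything else reduces to the textbook parabolic maximum principle on a compact cylinder.
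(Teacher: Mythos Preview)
Your proof is correct and follows essentially the same approach as the paper: both perturb by $-\eta t$ (the paper writes $-\varepsilon t$) to force a strict differential inequality, then use hypotheses (iii) and (iv) to localize the positive supremum away from the lateral boundaries, and hypothesis (ii) to control the bottom time slice. The only cosmetic difference is that you frame the argument by contradiction on an explicitly chosen compact box $K^{\star}$, whereas the paper argues directly that the supremum over $\Gamma_t=\{r^2\ge\lambda s,\ s\in[t',t]\}$ must lie on the bottom face $\{s=t'\}$, bounds it there by $Ct'$ via (ii), and then sends $t'\to 0$ and $\varepsilon\to 0$.
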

\begin{proof}
For $(x,t)\in \Omega_\lambda$ and $t\neq0$,  take $0<t'<t$ and define $\Gamma_t:=\{(x,s)\ |\ r(x)^2\ge\lambda s,\ s\in [t',t]\}$.

For any $\varepsilon>0$, consider $\kappa_\varepsilon(x,t):=\kappa(x,t)-\varepsilon t$, and by assumption (i) $\left(\frac{\partial}{\partial t}-\Delta_{g(t)})\kappa_\varepsilon(x,t\right)\le-\varepsilon<0$.

If $\sup_{\Gamma_t}\kappa_\varepsilon\le0$, then in particular $\kappa_\varepsilon(x,t)\le 0$.

If $\sup_{\Gamma_t}\kappa_\varepsilon>0$, by assumption (iii), there exists $(x_0,t_0)$ in the closure of $\Gamma_t$ such that $\kappa_\varepsilon(x_0,t_0)=\sup_{\Gamma_t}\kappa_\varepsilon$. Since $\left(\frac{\partial}{\partial t}-\Delta_{g(t)})\kappa_\varepsilon(x,t\right)\le-\varepsilon<0$, by the weak maximum principle, $(x_0,t_0)$ must lie on the parabolic boundary, i.e $(x_0,t_0)\in \{(y,s)\ |\ r(y)^2=\lambda s,\ s\in [t',t]\}\cup\{(y,t')\ |\ r(y)^2>\lambda t'\}$. By assumption (iv), we conclude that $(x_0,t_0)\in \{(y,t')\ |\ r(y)^2>\lambda t'\}$. Thanks to assumption (ii), we have in particular,
\begin{equation*}
    \kappa_\varepsilon(x,t)\le\kappa_{\varepsilon}(x_0,t_0)\le Ct'.
\end{equation*}
By letting $t'$ and $\varepsilon$ go to 0 lead to the desired result.
\end{proof}
\begin{corollary}[Exponential decay]\label{exponential decay}
  For $\lambda>0$ as in Proposition \ref{barrier function}, there exists a constant $C>0$ independent of time such that for all $(x,t)\in\Omega_\lambda$,
    \begin{equation}
        \begin{split}
        &|\varphi(x,t)|\le Cte^{-f_t}f_t^{-n-1}(x,t),\\
            &|u(x,t)|\le Cte^{-f_t}f_t^{-n-1}(x,t).
        \end{split}
    \end{equation}
\end{corollary}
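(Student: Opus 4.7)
The plan is to apply the parabolic maximum principle of Lemma \ref{maximum principle} to appropriate differences involving the barrier functions $v$ and $\bar v$ constructed in Proposition \ref{barrier function}. Since $u$ satisfies the homogeneous heat equation $(\partial_t - \Delta_{w_\varphi(t)})u = 0$ by Proposition \ref{solution to heat equation}, both $u - Cv$ and $-u - Cv$ are subsolutions of $\partial_t - \Delta_{w_\varphi(t)}$, so a two-sided application of the maximum principle (with $g(t) = w_\varphi(t)$) will yield $|u| \le Cv$ on $\Omega_\lambda$. For $\varphi$, the concavity of $\log$ applied to the complex Monge--Amp\`ere equation \eqref{Monge-Ampere equation} gives $\dot\varphi \le \Delta_{w(t)}\varphi$ and $\dot\varphi \ge \Delta_{w_\varphi(t)}\varphi$, so $\varphi$ is a subsolution of $\partial_t - \Delta_{w(t)}$ and a supersolution of $\partial_t - \Delta_{w_\varphi(t)}$. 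I will therefore bound $\varphi$ from above by $C\bar v$ (using the heat operator of $w(t)$) and from below by $-Cv$ (using the heat operator of $w_\varphi(t)$).

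\textbf{Verifying the hypotheses.} For each choice of $\kappa\in\{u-Cv,\ -u-Cv,\ \varphi-C\bar v,\ -\varphi-Cv\}$ and the corresponding metric, condition (i) of Lemma \ref{maximum principle} follows immediately from Proposition \ref{barrier function} combined with the PDEs just listed. Condition (ii), namely $\kappa\le C' t$ on $\Omega_\lambda$, holds because $v,\bar v\ge 0$ and the $k=0$ cases of \eqref{polynomial decay formula} and \eqref{rough estimates of u} give $|\varphi|,|u|\le C' t$. For condition (iii), fix a time interval $[T_0,T_1]\subset(0,T)$; then \eqref{polynomial decay formula} and \eqref{rough estimates of u} with arbitrarily large $k$ ensure that $\varphi$ and $u$ decay in $r$ faster than any polynomial uniformly in $t\in[T_0,T_1]$, while $v,\bar v$ remain nonnegative, so the required lim sup is $\le 0$.

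\textbf{The boundary condition.} The main technical point is verifying condition (iv) on the parabolic boundary $\{r(x)^2=\lambda t\}$. On this set, the equivalence \eqref{equivalence of potential formula} gives $f_t,f_\varphi\in[\lambda/2-D,\ \lambda/2+D]$, so enlarging $\lambda$ if necessary (which is allowed by Proposition \ref{barrier function}), both $v(x,t)$ and $\bar v(x,t)$ are bounded below by a strictly positive multiple of $t$ there. On the other hand, the $k=0$ estimates from \eqref{polynomial decay formula} and \eqref{rough estimates of u} bound $|\varphi|$ and $|u|$ by an absolute constant times $t$ on the same boundary. Choosing $C$ large enough depending only on these constants forces $|\varphi|\le\min(Cv,C\bar v)$ and $|u|\le Cv$ on $\{r^2=\lambda t\}$, which is precisely condition (iv). This bookkeeping step---matching constants so that the crude polynomial bounds are absorbed into the barriers on the parabolic boundary---is the one place where care is needed; once done, Lemma \ref{maximum principle} applies verbatim.

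\textbf{Conclusion.} The four applications of Lemma \ref{maximum principle} yield $|u|\le Cv$ and $-Cv\le\varphi\le C\bar v$ on $\Omega_\lambda$. It remains to rewrite these barriers in terms of $f_t$. By \eqref{polynomial decay formula} with $k=0$, $|f_\varphi-f_t|=|X\cdot\varphi|/(2t)\le C_0/2$ is uniformly bounded, while $f_\varphi^{-1}$ is bounded on $\Omega_\lambda$; therefore
\begin{equation*}
v(x,t)=tf_\varphi^{-n-1}e^{-f_\varphi-\beta f_\varphi^{-1}}\le C\,t\,e^{-f_t}f_t^{-n-1},
\end{equation*}
and the corresponding estimate for $\bar v$ is even more direct since $e^{-\beta f_t^{-1}}\le 1$. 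Combining these comparisons with the barrier inequalities yields the stated exponential decay for both $|\varphi|$ and $|u|$.
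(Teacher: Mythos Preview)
Your proposal is correct and follows essentially the same approach as the paper's own proof: both apply Lemma \ref{maximum principle} to $\pm u - Cv$ and to $\varphi - C\bar v$, $-\varphi - Cv$, using Proposition \ref{barrier function} for the supersolution property, the crude polynomial bounds \eqref{polynomial decay formula} and \eqref{rough estimates of u} for conditions (ii)--(iv), and then convert from $f_\varphi$ to $f_t$ via \eqref{equivalence of potential formula}. The only cosmetic difference is that you organize the verification of hypotheses (i)--(iv) more systematically, whereas the paper checks them in a slightly more ad hoc order for $u$ and then says the argument for $\varphi$ ``follows analogously.''
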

\begin{proof}
   
Due to Corollary \ref{rough estimates of u}, $|u(x,t)|\le N\frac{t^2}{r(x)^2}\le N\frac{1}{\lambda}t$ holds on $\Omega_\lambda$. If $r(x)^2=\lambda t$, then by \eqref{equivalence of potential formula}, $0<\frac{\lambda}{2}-D\le f_\varphi(x,t)\le \frac{\lambda}{2}+D$, by the choice of $\lambda$ , there exists an $\varepsilon>0$ such that $v(x,t)\ge\varepsilon t$ for $v$ defined as in Proposition \ref{barrier function} if $r(x)^2=\lambda t$. Proposition \ref{solution to heat equation} and Proposition \ref{barrier function} ensure that:
\begin{enumerate}
    \item $\left(\frac{\partial}{\partial t}-\Delta_{w_\varphi(t)}\right)(u-\frac{N\frac{1}{\lambda}}{\varepsilon}v)\le 0$ in the interior of $\Omega_\lambda$.
    \item $(u-\frac{N\frac{1}{\lambda}}{\varepsilon}v)(t)\le \frac{N}{\lambda}t$ on $\Omega_\lambda$.
    \item For all $0<T_0<T_1<T$, we have
    \begin{equation*}
        \lim_{r(x)\to+\infty}\sup_{t\in [T_0,T_1]}(u-\frac{N\frac{1}{\lambda}}{\varepsilon}v)(x,t)\le\lim_{r(x)\to+\infty}\sup_{t\in [T_0,T_1]}N\frac{t^2}{r(x)^2}=0.
    \end{equation*}
    \item $(u-\frac{N\frac{1}{\lambda}}{\varepsilon}v)(x,t)\le0$ for all $(x,t)\in \Omega_\lambda$ such that $r(x)^2=\lambda t$.
\end{enumerate}
By lemma \ref{maximum principle}, $u\le \frac{N\frac{1}{\lambda}}{\varepsilon}v$ in $\Omega_\lambda$. The proof of $u\ge -\frac{N\frac{1}{\lambda}}{\varepsilon}v$ follows analogously. Define $C=\frac{N\frac{1}{\lambda}}{\varepsilon}$, on $\Omega_\lambda$, we have $|u(x,t)|\le Cv(x,t)\le Cte^{-f_\varphi}f_\varphi^{-n-1}(x,t)$. By \eqref{equivalence of potential formula} and the choice of $\lambda$, we deduce that there exists a constant $C>0$ such that
$|u(x,t)|\le Cte^{-f_t}f_t^{-n-1}(x,t)$.

As for estimating $\varphi$, we observe that $\frac{\partial}{\partial t}\varphi\le \Delta_{w(t)}\varphi$ and $\frac{\partial}{\partial t}\varphi\ge\Delta_{w_\varphi(t)}$ thanks to \eqref{Monge-Ampere equation}. The same argument involving $\varphi$ and $v,\bar{v}$ implies that there exists $C>0$ such that $|\varphi(x,t)|\le Cte^{-f_t}f_t^{-n-1}(x,t)$.
\end{proof}
From \eqref{equivalence of potential formula}, we observe that the set $\{f_t > \frac{\lambda}{2} + D\}$ is a subset of $\Omega_\lambda$. \textbf{For convenience, we will henceforth use $\Omega_\lambda$ to denote the set $\{f_t > \frac{\lambda}{2} + D\}$, and, with slight abuse of notation, we continue to write $\lambda$ instead of $\frac{\lambda}{2} + D$.} With this convention, the above estimates hold within $\Omega_\lambda$, let us summarize them as follows:
\begin{prop}\label{resume of estimate}
There exists a constant $\lambda>0$, such that for $(x,t)\in \Omega_\lambda:=\{(x,t)\ |\ f_t(x,t)\ge\lambda\}\subset M\times (0,T)$, we have
\begin{enumerate}
    \item $|u(x,t)|\le Cte^{-f_t}f_t^{-n-1}(x,t)$,
    \item $|\varphi(x,t)|\le Cte^{-f_t}f_t^{-n-1}(x,t)$,
    \item $|X\cdot\varphi(x,t)|\le C_ktf_t^{-k}(x,t)$ for all $k\in\N_0$,
    \item $|X\cdot u(x,t)|\le Ntf_t^{-1}$,
    \item $|\partial\bar{\partial}u(x,t)|_{g_0}\le Nf_t^{-1}$,
    \item $A_0^{-1}g(t)\le g_\varphi(t)\le A_0g(t)$, and $A_0^{-1}g_0\le g_\varphi(t)\le A_0g_0$.
\end{enumerate}
\end{prop}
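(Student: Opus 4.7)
The plan is to observe that Proposition \ref{resume of estimate} is essentially a repackaging of the estimates established earlier in the section, once the parabolic region is redefined in terms of $f_t$ rather than $r^2/t$. The translation between the two regimes is governed entirely by the potential comparison \eqref{equivalence of potential formula}, so the whole argument reduces to bookkeeping.

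First I would verify the inclusion between the old and new parabolic regions. By \eqref{equivalence of potential formula}, $\bigl|f_t(x,t) - \tfrac{r(x)^2}{2t}\bigr| \le D$, so whenever $f_t(x,t) \ge \tfrac{\lambda}{2}+D$ (the quantity being abusively renamed $\lambda$) one has $r(x)^2 \ge \lambda t$, placing $(x,t)$ in the old $\Omega_\lambda$ from Proposition \ref{barrier function}. In the other direction, on this new $\Omega_\lambda$ one gets a two-sided comparison of the form $c\, r(x)^2/t \le f_t(x,t) \le C\, r(x)^2/t$ for universal constants, which is what lets me substitute $f_t$ for $r^2/t$ in the existing bounds.

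Next I would go through the six items in order. Items (i) and (ii) are Corollary \ref{exponential decay} verbatim, so nothing is required beyond noting that the new $\Omega_\lambda$ is contained in the set where that corollary applies. Item (iii) follows from the polynomial decay \eqref{polynomial decay formula} for $X\cdot\varphi$ at order $k$, rewriting $r^{-2k}$ as $(t/r^2)^k \cdot t^{-k}$ and absorbing the comparison constant from $r^2/t \sim f_t$ into $C_k$. Items (iv) and (v) are obtained the same way from the rough bounds \eqref{rough estimates of u} on $X\cdot u$ and $\partial\bar\partial u$, both of which were stated in terms of $t/r^2$ and translate directly to $f_t^{-1}$ bounds. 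Item (vi) invokes Lemma \ref{equivalence of metric} separately for the two Ricci flows $g(t)$ and $g_\varphi(t)$ coming out of the cone $(C_0,g_0)$, each giving $A_0^{-1} g_0 \le g(t),\, g_\varphi(t) \le A_0 g_0$; composing these two-sided inequalities yields both conclusions of (vi), possibly after enlarging $A_0$.

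Since every estimate is already in hand, there is no real obstacle here; the only mild subtlety is making sure that the constants $\lambda$ and $A_0$ used throughout the section are compatible with the combined comparisons in (vi), which is handled simply by taking the maximum of the various thresholds produced by Propositions \ref{consequence of taylor expansion} and \ref{barrier function} and Lemma \ref{equivalence of metric}.
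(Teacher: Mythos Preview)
Your proposal is correct and matches the paper's treatment exactly: the paper does not give a separate proof of this proposition at all, but simply notes (in the paragraph immediately preceding it) that the region $\{f_t>\tfrac{\lambda}{2}+D\}$ is contained in the old $\Omega_\lambda$ via \eqref{equivalence of potential formula}, renames the threshold, and declares the result a summary of the estimates already obtained. Your item-by-item accounting of where each bound comes from is accurate and, if anything, more explicit than what the paper provides.
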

\section{Analysis of the normalized K\"ahler-Ricci flow}\label{soliton obstruction fl}
In this section, we mainly consider smooth function in space-time defined by
\begin{equation*}
    \varphi(x,t)=t\psi(\Phi_t(x),\tau),\quad\textrm{for $(x,t)\in M\times (0,T)$,\quad $\tau=\log t.$}
\end{equation*}
Recall the setting of $\varphi$ as in Proposition \ref{Proposition monge ampere}:
\begin{equation*}
    \begin{split}
        &\frac{\partial}{\partial t}\varphi(t)=\log\frac{w_\varphi(t)^n}{w(t)^n},\\
        & JX\cdot\varphi(t)=0,\quad \forall t\in (0,T),
    \end{split}
\end{equation*}
with $\pi_*\varphi(t)$ converges to $0$ locally smoothly when $t$ tends to $0$.

We deduce that $\psi$ satisfies the following evolution equation
\begin{equation*}
    \begin{aligned}
        t\Phi_t^*(\frac{\partial}{\partial \tau}\psi)+t\Phi_t^*\psi-\frac{t}{2}X\cdot\Phi_t^*\psi=t\frac{\partial}{\partial t}\varphi=t\log\frac{w_\varphi(t)^n}{w(t)^n}.
    \end{aligned}
\end{equation*}
we conclude that
\begin{equation}\label{obstruction flow}
    \frac{\partial}{\partial \tau}\psi(x,\tau)=\log\frac{w_\psi(\tau)^n}{w^n}(x)+\frac{1}{2}X\cdot\psi(x,\tau)-\psi(x,\tau),
\end{equation}
where $(x,\tau)\in M\times(-\infty,\log T)$, $w$ is the K\"ahler-Ricci soliton metric form in Theorem \ref{main} and Theorem \ref{Theorem of CDS}, $w_\psi(\tau):=w+i\partial\bar{\partial}\psi(\tau)$ is given by $e^\tau\Phi_{e^\tau}^*w_\psi(\tau)=w_\varphi(e^\tau)$. We also denote by $g_\psi(\tau)$ the Riemannian metric induced by $w_\psi(\tau)$.

We call this equation \eqref{obstruction flow} the \emph{normalized K\"ahler-Ricci flow}.
\begin{lemma}[Correspondences]\label{correspondce}
    One has the following correspondences for quantities of normalized K\"ahler-Ricci flow and K\"ahler-Ricci flow: for any $(x,\tau)\in M\times(-\infty,\log T)$,
    \begin{enumerate}
        \item $e^\tau\Phi_{e^{\tau}}^*\psi(x,\tau)=\varphi(x,e^\tau)$,
        \item $e^\tau\Phi_{e^{\tau}}^*(X\cdot\psi)(x,\tau)=X\cdot\varphi(x,e^\tau)$,
        \item $e^\tau\Phi_{e^{\tau}}^*\dot{\psi}(x,\tau)=u(x,e^\tau)$, here $\dot{\psi}:=\frac{\partial}{\partial\tau}\psi$,
        \item $e^\tau\Phi_{e^{\tau}}^*(X\cdot\dot{\psi})(x,\tau)=X\cdot u(x,e^\tau)$,
        \item $e^\tau\Phi_{e^{\tau}}^*(\partial\bar{\partial}\dot\psi)(x,\tau)=\partial\bar{\partial} u(x,e^\tau)$,
        \item a Hamiltonian potential of $X$ with respect to $w_\psi(\tau)$ is given by $f_\psi(x,\tau):=f(x)+\frac{1}{2}X\cdot\psi(x,\tau)$.
    \end{enumerate}
\end{lemma}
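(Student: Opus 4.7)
The plan is to verify all six identities by direct chain-rule computation, starting from the defining relation $\varphi(x,t) = t\,\psi(\Phi_t(x),\log t)$ and exploiting two structural facts: (a) $\Phi_t = \Phi^X_{-\frac{1}{2}\log t}$ is a reparameterization of the flow of $X$, so $\Phi_t^{*}X=X$ and hence $X\cdot$ commutes with $\Phi_t^{*}$; and (b) $\Phi_t$ is a biholomorphism, so $\partial\bar\partial$ commutes with $\Phi_t^{*}$. Identity (i) is just the definition rewritten with $\tau=\log t$, $t=e^\tau$. For (ii), apply $X$ to (i) and use (a). The main computation is (iii): differentiating $\varphi(x,t)=t\,\Phi_t^{*}\psi(x,\tau)$ in $t$ and using $\partial_t\Phi_t(x)=-\tfrac{1}{2t}X(\Phi_t(x))$, I get
\[
\dot\varphi(x,t) \;=\; \Phi_t^{*}\psi(x,\tau)\;-\;\tfrac{1}{2}\,\Phi_t^{*}(X\cdot\psi)(x,\tau)\;+\;\Phi_t^{*}\dot\psi(x,\tau).
\]
Substituting this together with (i) and (ii) into $u=t\dot\varphi+\tfrac{1}{2}X\cdot\varphi-\varphi$ makes the $\Phi_t^{*}\psi$ and $\Phi_t^{*}(X\cdot\psi)$ terms cancel in pairs, leaving $u(x,t)=t\,\Phi_t^{*}\dot\psi(x,\tau)=e^\tau\Phi_{e^\tau}^{*}\dot\psi(x,\tau)$, which is (iii).

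Identities (iv) and (v) then follow by applying $X\cdot$ and $\partial\bar\partial$ to (iii): the operator $X\cdot$ passes through $\Phi_{e^\tau}^{*}$ by (a), while $\partial\bar\partial$ passes through $\Phi_{e^\tau}^{*}$ by (b). For (vi), I invoke the lemma proved earlier in Section~\ref{Normalization of problem} stating that $F(x,t):=t\,\Phi_t^{*}f(x)+\tfrac{1}{2}X\cdot\varphi(x,t)$ is a Hamiltonian potential of $X$ with respect to $g_\varphi(t)$. Using (ii) to rewrite $\tfrac{1}{2}X\cdot\varphi(x,t)=\tfrac{t}{2}\,\Phi_t^{*}(X\cdot\psi)(x,\tau)$ gives $F=t\,\Phi_t^{*}f_\psi$, where $f_\psi=f+\tfrac{1}{2}X\cdot\psi$. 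Since $g_\varphi(t)=t\,\Phi_t^{*}g_\psi(\tau)$ and $\Phi_t^{*}X=X$, the identity $dF=g_\varphi(t)(X,\cdot\,)$ pulls back by $(\Phi_t^{-1})^{*}$ and rescales by $t^{-1}$ to $df_\psi=g_\psi(\tau)(X,\cdot\,)$, i.e.\ locally $\bar\partial_{\bar j}f_\psi=(g_\psi)_{i\bar j}X^i$, which is the claim.

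There is no real obstacle here; the only point requiring care is to correctly separate, in (iii), the $t$-derivative falling on the explicit prefactor $t$, on the diffeomorphism $\Phi_t$, and on the second slot of $\psi$ through $\tau=\log t$, and to express $\partial_t\Phi_t$ in terms of $X$ using $\Phi_t=\Phi^X_{-\frac12\log t}$. Once (iii) is in hand, (iv)--(vi) are immediate consequences of the two commutation relations $[\Phi_t^{*},X\cdot]=0$ and $[\Phi_t^{*},\partial\bar\partial]=0$, combined with the scaling $e^\tau\Phi_{e^\tau}^{*}g_\psi(\tau)=g_\varphi(e^\tau)$.
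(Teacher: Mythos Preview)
Your proof is correct. The one substantive difference from the paper is in item (iii): the paper verifies it by writing both sides via the Monge--Amp\`ere equations, namely $u(x,e^\tau)=e^\tau\log\frac{w_\varphi(e^\tau)^n}{w(e^\tau)^n}+\tfrac12 X\cdot\varphi-\varphi$ and $\dot\psi=\log\frac{w_\psi^n}{w^n}+\tfrac12 X\cdot\psi-\psi$, and then matching term by term using $e^\tau\Phi_{e^\tau}^*w_\psi=w_\varphi(e^\tau)$, $e^\tau\Phi_{e^\tau}^*w=w(e^\tau)$ together with (i) and (ii). Your route instead differentiates the defining relation $\varphi(x,t)=t\,\Phi_t^*\psi(x,\tau)$ directly and substitutes into the algebraic definition $u=t\dot\varphi+\tfrac12 X\cdot\varphi-\varphi$; the $\Phi_t^*\psi$ and $\Phi_t^*(X\cdot\psi)$ contributions cancel and only $t\,\Phi_t^*\dot\psi$ survives. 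This is arguably cleaner, since it shows the correspondence is a pure consequence of the change of variables $\varphi\leftrightarrow\psi$ and the definition of $u$, with no appeal to the PDE being solved. For (i), (ii), (iv), (v), (vi) your argument coincides with the paper's (the Hamiltonian potential lemma you invoke actually sits in Section~\ref{A priori estimate at spatial infinity}, not Section~\ref{Normalization of problem}, but the content is exactly what you use).
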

\begin{proof}
    (i) and (ii) follow by definition. By recalling the definition of $u$ and \eqref{obstruction flow}, 
    \begin{equation*}
        \begin{split}
           & u(x,e^\tau)=e^\tau\log\frac{w_\varphi(e^\tau)^n}{w(e^\tau)^n}(x)+\frac{1}{2}X\cdot\varphi(x,e^\tau)-\varphi(x,e^\tau),\\
           &\dot{\psi}(x,\tau)=\frac{\partial}{\partial \tau}\psi(x)=\log\frac{w_\psi(\tau)^n}{w^n}(x,\tau)+\frac{1}{2}X\cdot\psi(x,\tau)-\psi(x,\tau),
        \end{split}
    \end{equation*} we conclude that $e^\tau\Phi_{e^{\tau}}^*\dot{\psi}(x,\tau)=u(x,e^\tau)$. Therefore (iv) and (v) follow immediately.

    By the definition of $w_\psi(\tau)$, a Hamiltonian potential of $X$ with respect to $w_\varphi(e^\tau)$ is given by $e^\tau\Phi_{e^\tau}^*f+\frac{1}{2}X\cdot \varphi(e^\tau)$, hence a Hamiltonian potential of $X$ with respect to $w_\psi(\tau)$ is given by $f_\psi(x,\tau):=f(x)+\frac{1}{2}X\cdot\psi(x,\tau)$.
\end{proof}
\subsection{Elliptic estimates on the sub-level set of soliton potential}

Due to Proposition \ref{resume of estimate}, we have an a priori estimate for these new-defined functions at spatial infinity.
\begin{lemma}\label{a priori estimate for psi}
    There exist constants $\lambda>0$, $C>0, A_0>1, N>0$ and $\{C_k>0\}_{k\in\N_0}$ independent of time such that for any $(x,\tau)\in\{f>\lambda\}\times(-\infty,\log T)$,
    \begin{enumerate}
        \item $|\dot{\psi}(x,\tau)|\le Ce^{-f(x)}f(x)^{-n-1}$,
        \item $|\psi(x,\tau)|\le Ce^{-f(x)}f(x)^{-n-1}$,
        \item $|X\cdot\psi(x,\tau)|\le C_kf(x)^{-k}$ for all $k\in\N_0$,
        \item $|X\cdot \dot\psi(x,\tau)|\le Nf(x)^{-1}$,
        \item $|\partial\bar{\partial}\dot\psi(x,\tau)|_{w_\psi(\tau)}\le Nf(x)^{-1}$,
        \item $A_0^{-1}g\le g_\psi(x,\tau)\le A_0g$, and $A_0^{-1}g_0\le g_\psi(x,\tau)\le A_0g_0$.
    \end{enumerate}
    Moreover, the curvature condition in Theorem \ref{main} becomes:
    \begin{equation}\label{curvature bound}
    \begin{split}
        &\Ric(g_\psi)(\tau)\le Ag_\psi(\tau),\\
        &R(g_\psi(\tau))\ge -A,
    \end{split}
    \end{equation}
    in $M\times(-\infty,\log T)$.
\end{lemma}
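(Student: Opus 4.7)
The plan is to translate each bound in Proposition \ref{resume of estimate} into its counterpart for $\psi$ using the dictionary of Lemma \ref{correspondce}. Given $(y,\tau) \in \{f > \lambda\}\times(-\infty,\log T)$, set $t := e^\tau$ and $x := \Phi_t^{-1}(y)$; then $f_t(x,t) = f(\Phi_t(x)) = f(y) > \lambda$, so $(x,t) \in \Omega_\lambda$ and every estimate of Proposition \ref{resume of estimate} applies at $(x,t)$. The key observation is that the factor $e^\tau$ appearing on the left-hand side of each identity in Lemma \ref{correspondce} exactly cancels the factor $t$ in the bounds of Proposition \ref{resume of estimate}, so the resulting estimates on $\psi$ are time-independent.

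Items (i)--(iv) are then immediate substitutions; for instance
\begin{equation*}
|\dot\psi(y,\tau)| \;=\; e^{-\tau}|u(x,e^\tau)| \;\le\; e^{-\tau}\cdot C e^\tau e^{-f_t(x,t)}f_t(x,t)^{-n-1} \;=\; Ce^{-f(y)}f(y)^{-n-1},
\end{equation*}
and the bounds on $\psi$, $X\cdot\psi$, $X\cdot\dot\psi$ follow identically from the corresponding bounds on $\varphi$, $X\cdot\varphi$, $X\cdot u$. For (v) one has to be careful with tensor scaling: the identity $\Phi_{e^\tau}^*(\partial\bar\partial\dot\psi) = e^{-\tau}\partial\bar\partial u$ combined with $\Phi_{e^\tau}^* g_\psi(\tau) = e^{-\tau} g_\varphi(e^\tau)$, together with the fact that a $(1,1)$-form's pointwise norm scales as $|\cdot|_{cg} = c^{-1}|\cdot|_g$, yields $|\partial\bar\partial\dot\psi|_{g_\psi(\tau)}(y) = |\partial\bar\partial u|_{g_\varphi(t)}(x)$, and the conclusion follows from the metric equivalence $g_\varphi(t) \sim g_0$ together with $|\partial\bar\partial u|_{g_0} \le Nf_t^{-1}$.

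For (vi) one uses the cone self-similarity $\Phi_t^* g_0 = t^{-1}g_0$ (which holds since $d\pi(X) = r\partial_r$ implies that $\Phi_t$ acts on the radial coordinate as $r \mapsto t^{-1/2}r$) and the soliton self-similarity $\Phi_t^* g = t^{-1}g(t)$; pulling back $A_0^{-1}g(t)\le g_\varphi(t)\le A_0 g(t)$ and $A_0^{-1}g_0\le g_\varphi(t)\le A_0 g_0$ by $\Phi_t$ and cancelling the common factor $t^{-1}$ yields the stated comparisons. Finally, the curvature bounds follow from the naturality and scaling of Ricci and scalar curvature: since $\Ric(cg) = \Ric(g)$ and Ricci is natural under diffeomorphisms, the inequality $\Ric(g_\varphi(t)) \le (A/t)g_\varphi(t)$ rescales directly to $\Ric(g_\psi(\tau)) \le A g_\psi(\tau)$, while the scaling $R(cg) = c^{-1}R(g)$ absorbs the $1/t$ factor in $R(g_\varphi(t)) \ge -A/t$ to produce $R(g_\psi(\tau)) \ge -A$. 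There is no substantial obstacle in the proof; the only delicate step is the bookkeeping of pullback and scaling factors, but Lemma \ref{correspondce} has been formulated precisely so that these factors cancel cleanly.
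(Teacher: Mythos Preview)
Your proposal is correct and follows exactly the approach of the paper, which simply says the proof is direct from Lemma~\ref{correspondce} and Proposition~\ref{resume of estimate}, noting for (vi) that $e^{\tau}\Phi_{e^{\tau}}^*g_0=g_0$. You have filled in the bookkeeping of pullback and scaling factors that the paper leaves implicit, and your treatment of each item---including the tensor scaling for (v) and the naturality/scaling argument for the curvature bounds---is accurate.
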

\begin{proof}
    The proof is direct by Lemma \ref{correspondce} and Proposition \ref{resume of estimate}. For (vi), this is due to the fact that $e^{\tau}\Phi_{e^{\tau}}^*g_0=g_0$ for any $\tau\in\R$.
\end{proof}
By \cite[Theorem 2.19]{Conlon-Deruelle-Sun}, $f$ is a proper function, thus $\{f\le\lambda\}$ is a compact set, the following results establish some estimates in the compact set $\{f\le\lambda\}$.
\begin{prop}[Boundedness of $X\cdot\psi$]\label{boundedness of potential}
    There exists a constant $C_f>0$ independent of time such that $|X\cdot\psi|\le C_f$ on $\{f\le\lambda\}\times (-\infty,\log T)$.
\end{prop}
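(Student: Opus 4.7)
The plan is to apply a parabolic maximum principle to $X\cdot\psi$ on the compact cylinder $\{f\le\lambda\}\times(-\infty,\log T)$, with boundary data on $\{f=\lambda\}$ supplied by Lemma~\ref{a priori estimate for psi}(iii) and uniform interior control of the coefficients deduced from the curvature hypotheses of Theorem~\ref{main}.

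The first step is to derive a parabolic equation for $X\cdot\psi$. Since $X$ is real-holomorphic, $\mathcal{L}_X$ commutes with $\partial,\bar{\partial}$ and with $\partial_\tau$; applying $X\cdot$ to \eqref{obstruction flow} and using the soliton identity $\mathcal{L}_X w = 2i\partial\bar{\partial}f$ (so that $\tr_w\mathcal{L}_X w = 2\Delta_w f = 2(R_w+n)$ and $\tr_{w_\psi}\mathcal{L}_X w_\psi = 2\Delta_{w_\psi}f + \Delta_{w_\psi}(X\cdot\psi)$), a direct computation yields
\begin{equation*}
\left(\partial_\tau - \Delta_{w_\psi} - \frac{X}{2} + 1\right)(X\cdot\psi) \;=\; 2(\Delta_{w_\psi} - \Delta_w)f.
\end{equation*}
The ``$+1$'' damping term on the left is crucial: at any interior extremum of $X\cdot\psi$ the elliptic and drift terms vanish with the correct sign, forcing the extremum value to be controlled by the right-hand side alone.

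Next I would show that the right-hand side $2\tr_{w_\psi}(\rho_w+w)-2(R_w+n)$ is uniformly bounded on $\{f\le\lambda\}\times(-\infty,\log T)$. On the compact set $\{f\le\lambda\}$ the tensor $\rho_w+w=i\partial\bar{\partial}f$ and the scalars $R_w,n$ are uniformly bounded in $g$-norm, so the task reduces to a uniform bound on $|g_\psi^{-1}|_g$ in $\tau$. This is where the curvature assumption \eqref{curvature bound} becomes essential: combined with the normalized flow $\partial_\tau g_\psi=\mathcal{L}_{X/2}g_\psi-\Ric(g_\psi)-g_\psi$, the pointwise equivalence $A_0^{-1}g\le g_\psi\le A_0 g$ valid on $\{f>\lambda\}$ (Lemma~\ref{a priori estimate for psi}(vi), which by continuity supplies boundary data on $\{f=\lambda\}$), and a further maximum-principle argument on $\log(w_\psi^n/w^n)$ exploiting the elliptic identity $\Delta_{w_\psi}\log(w_\psi^n/w^n)=\tr_{w_\psi}\rho_w-R_{w_\psi}$ with $|R_{w_\psi}|$ two-sided bounded by \eqref{curvature bound}, one rules out degeneration of $g_\psi$ on the compact set and obtains a uniform equivalence $c^{-1}g\le g_\psi(\tau)\le c\,g$ there.

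Finally, with a uniform bound $B$ on the right-hand side and the boundary estimate $|X\cdot\psi|\le C_0$ on $\{f=\lambda\}$ (Lemma~\ref{a priori estimate for psi}(iii) with $k=0$), the parabolic maximum principle applied on each finite slab $\{f\le\lambda\}\times[\tau_0,\tau_1]$ bounds $|X\cdot\psi|$ by $\max(B,C_0)$ plus a contribution from the initial slice $\tau=\tau_0$. To dispose of the initial-slice dependence, I apply the same argument to $e^\tau(X\cdot\psi)$, whose evolution equation carries an additional factor $e^\tau$ on the right-hand side that vanishes as $\tau\to-\infty$, and then let $\tau_0\to-\infty$. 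The principal technical obstacle lies in the uniform control of $|g_\psi^{-1}|_g$ on the compact set $\{f\le\lambda\}$: without it the coefficients of the parabolic equation degenerate and the whole scheme collapses, and the combined use of the Ricci upper bound, the scalar lower bound, and the boundary equivalence is precisely what the curvature hypotheses of Theorem~\ref{main} are designed to enable.
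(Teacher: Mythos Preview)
Your evolution equation for $X\cdot\psi$ is correct, and you correctly identify the crux as bounding the right-hand side $2(\Delta_{w_\psi}-\Delta_w)f=2\tr_{w_\psi}(i\partial\bar\partial f)-2\Delta_w f$, which requires a bound on $|g_\psi^{-1}|_g$ on $\{f\le\lambda\}$. The gap is in how you propose to obtain this. A maximum-principle argument on $\log(w_\psi^n/w^n)$ can at best control the \emph{volume ratio}, not the metric equivalence $c^{-1}g\le g_\psi\le c\,g$: the determinant says nothing about $\tr_{g_\psi}g$ (think of $\operatorname{diag}(\varepsilon,\varepsilon^{-1})$). Worse, your elliptic identity for $\log(w_\psi^n/w^n)$ already contains $\tr_{w_\psi}\rho_w$, which itself needs the metric control you are after. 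And in the paper's logical order even \emph{volume} control on $\{f\le\lambda\}$ is established only \emph{after} the present proposition (Proposition~\ref{boundedness of 1+2} uses the bound on $X\cdot\psi$ to control $\log(w_\psi^n/w^n)=\dot\psi+\psi-\tfrac12 X\cdot\psi$). So at this stage of the argument there is no way to bound the right-hand side, and the scheme is circular.

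The paper sidesteps all of this with a one-line elliptic observation that needs no control of $g_\psi$ whatsoever. Since $f_\psi:=f+\tfrac12 X\cdot\psi$ is a Hamiltonian potential of $X$ for $g_\psi$, one has $\nabla^{g_\psi}f_\psi=X$. At any \emph{interior} extremum $x_0\in\{f<\lambda\}$ of $f_\psi(\cdot,\tau)$ the vector $X(x_0)$ therefore vanishes, so $(X\cdot\psi)(x_0,\tau)=0$ and $f_\psi(x_0,\tau)=f(x_0)\in[\min_{\{f\le\lambda\}}f,\lambda]$. Boundary extrema on $\{f=\lambda\}$ are handled by Lemma~\ref{a priori estimate for psi}(iii). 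Subtracting $f$ bounds $\tfrac12|X\cdot\psi|$ by $\lambda-\min_{\{f\le\lambda\}}f+C_0$, uniformly in $\tau$, with no appeal to curvature bounds or metric equivalence on the compact set.
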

\begin{proof}
For all $\tau\in (-\infty,\log T)$,
    we observe that \begin{equation*}
        X=\nabla^{g_\psi(\tau)}\left(f+\frac{1}{2}X\cdot\psi(\tau)\right)=\nabla^gf,
    \end{equation*}
       holds on $M$. Assume that at $x_0$(resp. $y_0$) $\in\{f\le\lambda\}$, $f+\frac{1}{2}X\cdot\psi(\tau)$ achieves its maximum (resp. minimum) on $\{f\le\lambda\}$.

         If $x_0\in \{f<\lambda\}$, which means that $x_0$ lies in the interior of $\{f\le\lambda\}$, then we have $0=\nabla^{g_\psi(\tau)}(f+\frac{1}{2}X\cdot\psi)(x_0,\tau)=X(x_0)$, hence $(f+\frac{1}{2}X\cdot\psi)(x_0,\tau)=f(x_0)\le\lambda.$ If $x_0\in\{f=\lambda\}$, then by the estimate above (Lemma \ref{a priori estimate for psi}) we have $(f+\frac{1}{2}X\cdot\psi)(x_0,\tau)\le \lambda+C_0$. Therefore for any $x\in\{f\le\lambda\}$, we have
         \begin{equation*}
             \frac{1}{2}X\cdot\psi(x,\tau)=\left(f+\frac{1}{2}X\cdot\psi\right)(x,\tau)-f(x)\le\max_{f\le\lambda}\left(f+\frac{1}{2}X\cdot\psi(\tau)\right)-\min_{f\le\lambda}f:= C_f.
         \end{equation*}
         If $y_0\in \{f<\lambda\}$, which means $y_0$ is in the interior of $\{f\le\lambda\}$, then we have $0=\nabla^{g_\psi(\tau)}(f+\frac{1}{2}X\cdot\psi)(y_0,\tau)=X(y_0)$, thus $(f+\frac{1}{2}X\cdot\psi)(y_0,\tau)=f(y_0)\ge\min_{f\le\lambda}f.$ If $y_0\in\{f=\lambda\}$, then by the estimate above (Lemma \ref{a priori estimate for psi}) we have $(f+\frac{1}{2}X\cdot\psi)(x_0,\tau)\ge \inf_M f-C_0$. Therefore for any $x\in\{f\le\lambda\}$, we have
         \begin{equation*}
             \frac{1}{2}X\cdot\psi(x,\tau)=\left(f+\frac{1}{2}X\cdot\psi\right)(x,\tau)-f(x)\ge\min_{f\le\lambda}\left(f+\frac{1}{2}X\cdot\psi(\tau)\right)-\max_{f\le\lambda}f:= -C_f.
         \end{equation*}
         Since this constant does not depend on $\tau$, on $\{f\le\lambda\}\times (-\infty,\log T)$ we have 
        \begin{equation*}
            |X\cdot\psi|\le C_f,
        \end{equation*}
        as expected.
\end{proof}
From this Proposition \ref{boundedness of potential}, the two weighted functions $e^{f_\psi}$ and $e^f$ are equivalent along the flow. The next Proposition will imply that $\frac{w_\psi^n}{w^n}$ is bounded from below, it comes from the observation that $\psi+\dot\psi$ is a subsolution of the backward drift Laplacian operator $\Delta_{w_\psi}-\frac{X}{2}$.
\begin{prop}[Lower bound of $\psi+\dot{\psi}$]\label{boundedness of 1+2}
     There exists a constant $C>0$ independent of time such that $\dot{\psi}+\psi\ge-C$ on $\{f\le\lambda\}\times(-\infty,\log T)$.
\end{prop}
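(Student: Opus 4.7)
The strategy is to identify $F:=\dot\psi+\psi$ with the logarithm of a weighted volume ratio and derive a reaction-type differential inequality amenable to the parabolic minimum principle. Using the Monge--Amp\`ere equation \eqref{obstruction flow}, I would first rewrite
$$
F \;=\; \log\frac{w_\psi(\tau)^n}{w^n} + \tfrac{1}{2}\,X\cdot\psi,
$$
so that the desired lower bound on $F$ is equivalent to a positive lower bound for the weighted volume ratio $e^{F}=e^{\frac{1}{2}X\cdot\psi}\,w_\psi^n/w^n$. Differentiating \eqref{obstruction flow} in $\tau$ and using $\partial_\tau\log(w_\psi^n/w^n)=\Delta_{w_\psi}\dot\psi$ together with $\Delta_{w_\psi}\psi=n-\tr_{w_\psi}w$ yields the evolution
$$
\Big(\partial_\tau-\Delta_{w_\psi}-\tfrac{X}{2}\Big)F \;=\; \tr_{w_\psi}w - n - \tfrac{1}{2}\,X\cdot\psi.
$$

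The decisive ingredient is the AM--GM inequality applied to the eigenvalues of $w_\psi^{-1}w$, which gives $\tr_{w_\psi}w \ge n(w^n/w_\psi^n)^{1/n} = n\,e^{(X\cdot\psi/2 - F)/n}$. The right-hand side of the resulting differential inequality blows up as $F\to-\infty$, providing a strong reaction term that forces $F$ above a fixed threshold. I would then apply the parabolic minimum principle on the cylinder $\{f\le\lambda\}\times[\tau_1,\tau_2]$: at a spacetime interior minimum $(x_0,\tau_0)$ with $f(x_0)<\lambda$, one has $\nabla F=0$ (hence $X\cdot F=0$), $\Delta_{w_\psi}F\ge 0$, and $\partial_\tau F\le 0$, so that
$$
n\,e^{(X\cdot\psi/2-F(x_0,\tau_0))/n} \;\le\; n+\tfrac{1}{2}\,X\cdot\psi(x_0,\tau_0).
$$
Combined with the bound $|X\cdot\psi|\le 2C_f$ on $\{f\le\lambda\}$ from Proposition \ref{boundedness of potential}, this forces $F(x_0,\tau_0)\ge -C_f-n\log(1+C_f/n)=:-C_6$. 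On the spatial boundary $\{f=\lambda\}$, Lemma \ref{a priori estimate for psi} supplies a $\tau$-uniform bound $|F|\le C\,e^{-\lambda}\lambda^{-n-1}=:C_\lambda$.

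The principal obstacle is extending these cylindrical estimates to a bound uniform over the full interval $(-\infty,\log T)$. For each finite $\tau_1$, the minimum principle yields $F\ge-\max\!\bigl(C_6,\,C_\lambda,\,-\min_{\{f\le\lambda\}}F(\cdot,\tau_1)\bigr)$, and one must control the initial-minimum term as $\tau_1\to-\infty$. The difficulty is that $\{f\le\lambda\}$ contains the exceptional set $E$, on which the conical condition provides no direct information about $\psi$. I expect to resolve this by using the super-exponential reaction term as a built-in constant-barrier mechanism: choose $C$ strictly larger than $\max(C_6,C_\lambda)$ so that the constant $-C$ is a strict sub-barrier for the reaction inequality; any violation $F<-C$ would be forced upward under the forward flow and cannot persist, while the conical behaviour of $\psi$ on $M\setminus E$ as $\tau\to-\infty$, together with continuity of $F$ across $E$, rules out such violations accumulating at the past boundary. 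This should yield the sought $\tau$-uniform bound $F\ge-C$ on $\{f\le\lambda\}\times(-\infty,\log T)$.
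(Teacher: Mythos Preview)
Your evolution equation
\[
\Big(\partial_\tau-\Delta_{w_\psi}-\tfrac{X}{2}\Big)F \;=\; \tr_{w_\psi}w - n - \tfrac{1}{2}\,X\cdot\psi
\]
and the AM--GM step are correct, and your treatment of interior spacetime minima and of the lateral boundary $\{f=\lambda\}$ is fine. The gap lies in the last paragraph: the argument that ``violations cannot accumulate at the past boundary'' is circular. Your own differential inequality gives, at a spatial minimum, $D^- m(\tau)\ge n\,e^{(-C_f-m(\tau))/n}-n-C_f$ for $m(\tau):=\min_{\{f\le\lambda\}}F(\cdot,\tau)$; hence if $m(\tau_0)<-C$ one deduces $m(\tau)\le m(\tau_0)-(\tau_0-\tau)\to -\infty$ as $\tau\to-\infty$. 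To obtain a contradiction you would need an \emph{a priori} lower bound on $m(\tau)$ as $\tau\to-\infty$, but on the exceptional set $E\subset\{f\le\lambda\}$ the conical condition gives no direct information (this is precisely why the paper needs the later Propositions \ref{lower bound of psi} and \ref{total bound of psi}, which \emph{use} the present result). Invoking ``continuity of $F$ across $E$'' does not supply such a bound.

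The paper closes this gap by working \emph{elliptically} at each fixed $\tau$, which eliminates the initial-time issue entirely. The key extra input is the obstruction-tensor identity $\partial\bar\partial\dot\psi=\mathcal{L}_{X/2}g_\psi-\Ric(g_\psi)-g_\psi$, whose trace gives $\Delta_{w_\psi}\dot\psi=\tr_{w_\psi}(\mathcal{L}_{X/2}w_\psi)-R_{w_\psi}-n$; combining with $\Delta_{w_\psi}\psi=n-\tr_{w_\psi}w$ and the expression for $\tfrac{X}{2}\cdot\dot\psi$ yields
\[
\Big(\Delta_{w_\psi}-\tfrac{X}{2}\Big)F \;\le\; f+n-R_{w_\psi(\tau)}-n\,e^{-\frac{1}{n}(F-\frac{1}{2}X\cdot\psi)}.
\]
Here the scalar curvature lower bound $R_{w_\psi(\tau)}\ge -A$ from the curvature hypothesis in Theorem \ref{main} is essential: together with $f\le\lambda$ it turns the right-hand side into $\lambda+n+A-n\,e^{-\frac{1}{n}(F-\frac{1}{2}X\cdot\psi)}$, and the elliptic minimum principle on the compact set $\{f\le\lambda\}$ gives the bound directly, uniformly in $\tau$. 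Your parabolic inequality never invokes $R_{w_\psi}\ge -A$, which explains why it cannot close without something extra at the past boundary.
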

\begin{proof}
    For $\tau\in (-\infty,\log T)$, since $g$ is an expanding soliton, $\mathcal{L}_{\frac{X}{2}}g-\Ric(g)-g=0$, and by \eqref{obstruction flow},
    \begin{equation*}
        \begin{aligned}
            \partial\bar{\partial}\dot{\psi}&=\Ric(g)-\Ric(g_\psi(\tau))+\mathcal{L}_{\frac{X}{2}}(g_\psi(\tau)-g)+g-g_\psi(\tau)\\
            &=\mathcal{L}_{\frac{X}{2}}(g_\psi(\tau))-\Ric(g_\psi(\tau))-g_\psi(\tau).
        \end{aligned}
    \end{equation*}
    Thus, by tracing the previous identity with respect to metric $w_\psi(\tau)$, we get
         \begin{equation*}
             \Delta_{w_\psi(\tau)}\dot{\psi}=\tr_{w_\psi(\tau)}(\mathcal{L}_{\frac{X}{2}}w_\psi(\tau))-R_{w_\psi(\tau)}-n.
         \end{equation*}
         Moreover, observe that,
         \begin{equation*}
             \Delta_{w_\psi(\tau)}\psi=\tr_{w_\psi(\tau)}(w_\psi(\tau)-w)=n-\tr_{w_\psi(\tau)}w.
         \end{equation*}
        By \eqref{obstruction flow},
         \begin{equation*}
             \frac{X}{2}\cdot\dot{\psi}=\tr_{w_\psi(\tau)}(\mathcal{L}_{\frac{X}{2}}w_\psi(\tau))-\tr_w(\mathcal{L}_{\frac{X}{2}}w)+\frac{X}{2}\cdot\left(\frac{X}{2}\cdot\psi\right)-\frac{X}{2}\cdot\psi.
         \end{equation*}
         Since $X$ is real-holomorphic and $JX\cdot\psi=0$, $\frac{X}{2}\cdot(\frac{X}{2}\cdot\psi)=\frac{1}{2}g_\psi(\tau)(X,X)-\frac{1}{2}g(X,X)\ge -|\partial f|^2_g$, which combined once with soliton equation \eqref{soliton2} and Proposition \ref{boundedness of potential} gives:
         \begin{equation*}
             \begin{aligned}
                 \Delta_{w_\psi(\tau)}(\dot{\psi}+\psi)-\frac{X}{2}(\dot{\psi}+\psi)&\le |\partial f|^2_g+\tr_w(\mathcal{L}_{\frac{X}{2}}w)-R_{w_\psi(\tau)}-\tr_{w_\psi(\tau)}w\\
             &\le |\partial f|^2_g+n+R_w-R_{w_\psi(\tau)}-n\left(\frac{w^n}{w_\psi(\tau)^n}\right)^{\frac{1}{n}}\\
             &=|\partial f|^2_g+n+R_w-R_{w_\psi(\tau)}-ne^{-\frac{1}{n}(\dot{\psi}+\psi-\frac{1}{2}X\cdot\psi)}\\
             &=f+n-R_{w_\psi(\tau)}-ne^{-\frac{1}{n}(\dot{\psi}+\psi-\frac{1}{2}X\cdot\psi)}.\\
             \end{aligned}
         \end{equation*}
where the second line is ensured by the arithmetic geometric inequality. The third line comes from \eqref{obstruction flow}, the fourth line comes from Lemma \ref{soliton equalities}. Thanks to the curvature condition (Proposition \ref{a priori estimate for psi}), on $\{f\le\lambda\}\times (-\infty,\log T)$, we have
\begin{equation*}
   \begin{split}
        \Delta_{w_\psi(\tau)}(\dot{\psi}+\psi)-\frac{X}{2}(\dot{\psi}+\psi)&\le f+n-R_{w_\psi(\tau)}-ne^{-\frac{1}{n}(\dot{\psi}+\psi-\frac{1}{2}X\cdot\psi)}\\&\le \lambda+n+A-ne^{-\frac{1}{n}(\dot{\psi}+\psi-\frac{1}{2}X\cdot\psi)}.
   \end{split}
\end{equation*}
         Assume that at $x_0$, the function $\dot{\psi}+\psi$ achieves its minimum. If $x_0\in\{f<\lambda\}$, the minimum principle implies that 
         \begin{equation*}
             0\le \left(\Delta_{w_\psi}(\dot{\psi}+\psi)-\frac{X}{2}(\dot{\psi}+\psi)\right)(x_0)\le \lambda+n+A-ne^{-\frac{1}{n}(\dot{\psi}+\psi-\frac{1}{2}X\cdot\psi)}(x_0).
         \end{equation*}
         This gives $(\dot{\psi}+\psi-\frac{1}{2}X\cdot\psi)(x_0)\ge-C$ for some constant which does not depend on $\tau$ or $x_0$. Moreover, thanks to Proposition \ref{boundedness of potential}, we conclude that $(\dot{\psi}+\psi)\ge-C$ for some constant $C>0$ independent of time and $x_0$.

         If $x_0\in\{f=\lambda\}$, we use Proposition \ref{a priori estimate for psi} to get such time independent constant $C$.
\end{proof}
Thanks to the lower bound of $\psi+\dot\psi$, and the algebraic structure of $\psi+\dot\psi$, we get a lower bound of $\psi$.
\begin{prop}[Lower bound of $\psi$]\label{lower bound of psi}
         There exists a constant $C>0$ independent of time such that $\psi>-C$ on $\{f\le\lambda\}\times(-\infty,\log T)$.
     \end{prop}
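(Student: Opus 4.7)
The plan is to exploit the ``algebraic structure'' of $\dot\psi+\psi$: from the normalized Monge--Amp\`ere equation \eqref{obstruction flow},
\begin{equation*}
    \dot\psi+\psi=\log\frac{w_\psi^n}{w^n}+\frac{X}{2}\cdot\psi,
\end{equation*}
so at any spatial minimum of $\psi$ both terms on the right are nonnegative. I will apply Hamilton's trick to $m(\tau):=\min_{\{f\le\lambda\}}\psi(\cdot,\tau)$, which is well defined because $f$ is proper by \cite{Conlon-Deruelle-Sun}, so that $\{f\le\lambda\}$ is compact.

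First I split into two cases depending on the location of a minimizer $x_\tau$. If $x_\tau\in\{f=\lambda\}$ lies on the boundary, then the exponential decay of Lemma \ref{a priori estimate for psi}(ii) yields $m(\tau)\ge -Ce^{-\lambda}\lambda^{-n-1}=:-C_1$ independently of $\tau$. If instead $x_\tau$ is interior, then $d\psi(x_\tau,\tau)=0$ forces $X\cdot\psi(x_\tau,\tau)=0$, and $\partial\bar\partial\psi(x_\tau,\tau)\ge 0$ gives $w_\psi\ge w$ at $x_\tau$, whence $\log(w_\psi^n/w^n)(x_\tau,\tau)\ge 0$. Substituting into the identity above yields the sharp inequality $\dot\psi(x_\tau,\tau)+\psi(x_\tau,\tau)\ge 0$, strictly better than Proposition \ref{boundedness of 1+2}. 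Hamilton's trick then gives $m'(\tau)+m(\tau)\ge 0$ in the distributional sense on the open set $\{\tau:m(\tau)<-C_1\}$ (where the minimizer is forced to be interior). Equivalently, $\tau\mapsto e^\tau m(\tau)$ is nondecreasing on every maximal connected component of that set.

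To close the argument, I must rule out the existence of such components, which amounts to an ``initial condition'' at $\tau\to-\infty$. Suppose for contradiction a maximal component $(a,b)$ exists. Continuity forces $m(b)=-C_1$ if $b<\log T$, and a simple case analysis using the monotonicity forces $a=-\infty$; hence $m(\tau)<-C_1$ for all $\tau\le b$, and monotonicity gives $m(\tau)\le -C_1 e^{b-\tau}\to -\infty$ as $\tau\to-\infty$. Translating via the correspondence $e^\tau\psi(x,\tau)=\varphi(\Phi_{e^\tau}^{-1}(x),e^\tau)$ of Lemma \ref{correspondce}, this would mean
\begin{equation*}
    \min_{y\in K_{e^\tau}}\varphi(y,e^\tau)\longrightarrow -\infty\qquad\text{as }\tau\to-\infty,
\end{equation*}
where $K_t:=\Phi_t^{-1}(\{f\le\lambda\})$ is a compact subset of $\{f\le\lambda\}$ shrinking toward the exceptional set $E$ as $t\to 0$. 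This must be contradicted by a uniform lower bound on $\varphi$ on $\{f\le 2\lambda\}\times(0,\delta)$.

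The hard part is exactly this uniform boundedness: the conical condition of Theorem \ref{main} only supplies pointwise local-smooth convergence $\varphi(\cdot,t)\to 0$ on $M\setminus E$, whereas the backward trajectories $K_t$ accumulate precisely on $E$. To bridge this gap I plan to run a maximum-principle argument for the Monge--Amp\`ere equation $\dot\varphi=\log(w_\varphi^n/w^n)$ on the fixed compact domain $\{f\le 2\lambda\}$: the curvature bound (iv) of Theorem \ref{main} controls $\Ric(g_\varphi)$, while the exponential decay on the boundary $\{f=2\lambda\}$ from Corollary \ref{exponential decay} supplies boundary data, and the Monge--Amp\`ere structure together with the already-established bound on $X\cdot\psi$ (Proposition \ref{boundedness of potential}) lets one close off the argument interior to $\{f\le 2\lambda\}$. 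Combining this uniform bound with the monotonicity produced above rules out the component $(a,b)$ and yields $m(\tau)\ge -C_1$, hence $\psi>-C$ on $\{f\le\lambda\}\times(-\infty,\log T)$ after enlarging the constant.
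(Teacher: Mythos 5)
Your reduction is fine up to the last step, but the last step is the whole proposition, and you have not proved it. The Hamilton's-trick computation is correct: at an interior minimizer $x_\tau$ of $\psi(\cdot,\tau)$ on $\{f\le\lambda\}$ one has $X\cdot\psi(x_\tau)=0$ and $w_\psi\ge w$, hence $\dot\psi(x_\tau)\ge-\psi(x_\tau)$, so $e^\tau m(\tau)$ is nondecreasing on $\{m<-C_1\}$, and the case analysis forcing $a=-\infty$ is also fine. (A small slip: what follows is $\min_{K_{e^\tau}}\varphi(\cdot,e^\tau)=e^\tau m(\tau)\le e^{\tau_1}m(\tau_1)<0$ for all $\tau\le\tau_1$, not that this quantity tends to $-\infty$; a nondecreasing negative function need not diverge as $\tau\to-\infty$.) But to reach a contradiction you must show that $\varphi(\cdot,t)$ cannot stay below a fixed negative constant on $K_t$ as $t\to0$, i.e.\ you need $\varphi(\cdot,t)\to0$ \emph{uniformly} on a neighborhood of the set on which the $K_t$ accumulate. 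That set lies deep inside $\{f\le\lambda\}$ (the $K_t$ are backward gradient-flow images of $\{f\le\lambda\}$, accumulating on the critical set of $f$ and the exceptional set), which is exactly where the conical condition supplies no uniform information. The paragraph beginning ``The hard part is exactly this uniform boundedness'' only announces a plan --- a maximum principle on $\{f\le 2\lambda\}$ using the Ricci upper bound and boundary data --- and none of it is carried out; moreover such a uniform smallness statement for $\varphi$ near $E$ at $t=0$ is essentially a reformulation of the proposition you are trying to prove. The proof is therefore incomplete at its crux.

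The paper avoids this issue entirely by working in Lagrangian rather than Eulerian variables: for a fixed $y\notin E$ it computes $\partial_\rho\bigl(e^\rho\psi(\Phi_{e^\rho}(y),\rho)\bigr)=e^\rho\bigl(\dot\psi+\psi-\tfrac12X\cdot\psi\bigr)\ge-Ce^\rho$, where the uniform space-time lower bound on $\dot\psi+\psi-\tfrac12X\cdot\psi=\log\frac{w_\psi^n}{w^n}$ comes from Propositions \ref{boundedness of potential} and \ref{boundedness of 1+2} on $\{f\le\lambda\}$ and from Lemma \ref{a priori estimate for psi} outside. Since $e^\rho\psi(\Phi_{e^\rho}(y),\rho)=\varphi(y,e^\rho)\to0$ as $\rho\to-\infty$ \emph{pointwise} for each fixed $y\notin E$ (no uniformity near $E$ is needed), integration gives $\psi\ge-C$ off $E$, hence everywhere by continuity and density. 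If you want to salvage your Eulerian version, the anchor at $\tau=-\infty$ has to be supplied by precisely this kind of trajectory argument, at which point the Hamilton's trick becomes superfluous.
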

     \begin{proof}
         Pick $x\in\{f\le\lambda\}$ and $\tau\in(-\infty,\log T)$ outside the exceptional set $E$. And we suppose that $y\in M$ such that $\Phi_t(y)=x$, here $t=e^\tau>0$, we notice that by \cite[Lemma 2.6]{Conlon-Deruelle-Sun} $x\notin E$, $y\notin E$, with $E$ being the exceptional set of the resolution. By initial setting of the K\"ahler potential as in Proposition \ref{Proposition monge ampere} we have
         \begin{equation*}
             \lim_{s\to 0^+}\varphi(y,s)=0,
         \end{equation*}
         Recall that by the definition of the K\"ahler potential $\psi$,
         \begin{equation*}
             e^\rho\psi(\Phi_s(y),\rho)=\varphi(y,s),\quad s=e^\rho.
         \end{equation*}
         Hence we have
         \begin{equation*}
             \lim_{\rho\to-\infty}e^\rho\psi(\Phi_s(y),\rho)=0.
         \end{equation*}
         Therefore there exists a time-independent constant $C>0$ such that
        \begin{equation*}
            \begin{aligned}
                \partial_\rho(e^\rho\psi(\Phi_s(y),\rho))&=e^\rho(\psi+\dot{\psi})(\Phi_s(y),\rho)-e^\rho\frac{1}{2e^\rho}X\cdot\psi(\Phi_s(y),\rho)\frac{\partial s}{\partial\rho}\\
             &=e^\rho(\psi+\dot{\psi}-\frac{1}{2}X\cdot\psi)(\Phi_s(y),\rho)\\
             &\ge -Ce^\rho.
            \end{aligned}
        \end{equation*}
         The last line comes from that $\psi+\dot{\psi}-\frac{1}{2}X\cdot\psi$ is uniformly bounded from below in space-time by Lemma \ref{a priori estimate for psi}, Proposition \ref{boundedness of potential} and Proposition \ref{boundedness of 1+2}. Since $\lim_{\rho\to-\infty}e^\rho\psi(\Phi_s(y),\rho)=0.$ By integration we have in particular,
        \begin{equation*}
            e^\tau\psi(x,\tau)=e^\tau\psi(\Phi_t(y),\tau)\ge-Ce^\tau,
        \end{equation*}
         hence $\psi(x,\tau)\ge-C$ in $\{f\le\lambda\}\times(-\infty,\log T)$. Since the complementary of $E$ is dense, by continuity of $\psi$, we have $\psi>-C$ in $\{f\le\lambda\}\times(-\infty,\log T)$.
     \end{proof}
      \begin{prop}\label{total bound of psi}
      There exists a time independent constant $C>0$ such that
         $|\psi|+|\dot{\psi}|\le C$ on $\{f\le\lambda\}\times(-\infty,\log T)$.
     \end{prop}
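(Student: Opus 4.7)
The preceding propositions established lower bounds: $\psi \geq -C$ (Proposition \ref{lower bound of psi}), $\dot\psi + \psi \geq -C$ (Proposition \ref{boundedness of 1+2}), and the two-sided bound $|X \cdot \psi| \leq C_f$ (Proposition \ref{boundedness of potential}). To upgrade these to $|\psi| + |\dot\psi| \leq C$ on $\{f \leq \lambda\} \times (-\infty, \log T)$, it suffices to produce matching upper bounds for $\psi$ and for $\dot\psi$, and I will do this in three steps.

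\textbf{Step 1: Upper bound on $\dot\psi + \psi$.} I would repeat the computation of Proposition \ref{boundedness of 1+2} to obtain
\[
\left(\Delta_{w_\psi} - \tfrac{X}{2}\right)(\dot\psi + \psi) \;=\; f + n - R_{w_\psi} - \tr_{w_\psi} w - \tfrac{1}{2}|X|^2_{g_\psi},
\]
but now use the Ricci upper bound $\Ric(g_\psi) \leq A g_\psi$ in place of the scalar curvature lower bound. Tracing this gives $R_{w_\psi} \leq nA$, hence $-R_{w_\psi} \geq -nA$; together with $\tr_{w_\psi} w \geq 0$, $|X|^2_{g_\psi} \geq 0$, and an appropriate auxiliary function that restores coercivity in $\dot\psi + \psi$, the maximum principle applied at an interior maximum $x_0 \in \{f < \lambda\}$ (where $\nabla(\dot\psi + \psi)(x_0) = 0$ and therefore $X \cdot (\dot\psi + \psi)(x_0) = 0$) forces the maximum to be bounded. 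On the spatial boundary $\{f = \lambda\}$, Lemma \ref{a priori estimate for psi} supplies the needed barrier. This yields $\dot\psi + \psi \leq C$ on $\{f \leq \lambda\} \times (-\infty, \log T)$.

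\textbf{Step 2: Upper bound on $\psi$.} This mirrors Proposition \ref{lower bound of psi}. For $x \in \{f \leq \lambda\} \setminus E$, pick $y \notin E$ with $\Phi_{e^\tau}(y) = x$, and integrate the identity
\[
\partial_\rho\!\left[e^\rho \psi(\Phi_{e^\rho}(y),\rho)\right] \;=\; e^\rho\left(\psi + \dot\psi - \tfrac{X}{2}\psi\right)(\Phi_{e^\rho}(y),\rho)
\]
from $-\infty$ to $\tau$. Step 1 combined with $|X \cdot \psi| \leq C_f$ bounds the integrand from above by $C e^\rho$; the initial condition $\lim_{\rho \to -\infty} e^\rho \psi(\Phi_{e^\rho}(y),\rho) = \lim_{s \to 0^+} \varphi(y,s) = 0$ provides the base value. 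This yields $\psi(x,\tau) \leq C$, and density of $M \setminus E$ with continuity of $\psi$ gives $\psi \leq C$ throughout $\{f \leq \lambda\} \times (-\infty, \log T)$.

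\textbf{Step 3: Bounds on $\dot\psi$.} Combining $\psi \leq C$ (Step 2) with $\dot\psi + \psi \geq -C$ (Proposition \ref{boundedness of 1+2}) gives $\dot\psi \geq -2C$; combining $\psi \geq -C$ (Proposition \ref{lower bound of psi}) with $\dot\psi + \psi \leq C$ (Step 1) gives $\dot\psi \leq 2C$. Together with Step 2 this completes $|\psi| + |\dot\psi| \leq C$.

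\textbf{Main obstacle.} Step 1 is the delicate point. In Proposition \ref{boundedness of 1+2}, the estimate $\tfrac{X}{2}(\tfrac{X}{2}\psi) = \tfrac{1}{2}g_\psi(X,X) - \tfrac{1}{2}g(X,X) \geq -|\partial f|_g^2$ (using $g_\psi(X,X) \geq 0$) controlled the sign and produced an upper bound on $(\Delta_{w_\psi} - X/2)(\dot\psi + \psi)$ suited to the minimum principle. Reversing the direction to obtain an upper bound on $\dot\psi + \psi$ demands lower bounds on the same combination, which in turn requires controlling $|X|^2_{g_\psi}$ and $\tr_{w_\psi} w$ \emph{from above} on the compact set $\{f \leq \lambda\}$. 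The metric equivalence $g_\psi \sim g$ from Lemma \ref{a priori estimate for psi}(vi) holds only on $\{f > \lambda\}$, so one cannot invoke it inside. The replacement of the scalar curvature lower bound by the Ricci upper bound $\Ric(g_\psi) \leq A g_\psi$ is the essential new input, and closing the argument most likely requires a carefully weighted test function (for instance, a modification of $\dot\psi + \psi$ by a term involving $f$ or $e^{X\psi/2}(w_\psi^n/w^n)$, the weighted volume ratio highlighted in the Strategy of Proof as a subsolution of $\Delta_{w_\psi} - X/2$) to complete the maximum principle.
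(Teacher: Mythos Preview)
Your overall architecture (obtain an upper bound via maximum principle, then integrate in time, then combine) is sensible, but Step~1 as written has a genuine gap that you yourself flag: reversing the inequality of Proposition~\ref{boundedness of 1+2} would require upper bounds on $\tr_{w_\psi} w$ and on $|X|^2_{g_\psi}=g_\psi(X,X)$ inside $\{f\le\lambda\}$, and neither is available without metric equivalence there. Your closing speculation about a weighted auxiliary function does not supply the missing ingredient.

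The paper sidesteps this obstacle by tracing the obstruction identity $\partial\bar\partial\dot\psi=\partial\bar\partial f_\psi-\Ric(w_\psi)-g-\partial\bar\partial\psi$ with respect to the \emph{fixed} soliton metric $w$ rather than the evolving $w_\psi$. This yields
\[
\Delta_w(\dot\psi+\psi-f_\psi)=-\tr_w\Ric(w_\psi)-n,
\]
and now the Ricci upper bound $\Ric(w_\psi)\le A\,w_\psi$ gives $-\tr_w\Ric(w_\psi)\ge -A\tr_w w_\psi=-An-A\Delta_w\psi$, so that
\[
\Delta_w\bigl(\dot\psi+(A+1)\psi-f_\psi\bigr)\ge -(A+1)n.
\]
Adding the barrier $\tfrac{(A+2)n}{\varepsilon}f$ and invoking $\Delta_w f\ge\varepsilon$ (Corollary~\ref{subharmonic function}) makes the combination strictly $w$-subharmonic; the maximum over $\{f\le\lambda\}$ therefore sits on $\{f=\lambda\}$, where Lemma~\ref{a priori estimate for psi} controls it. The point is that working with $\Delta_w$ avoids any need to bound $\tr_{w_\psi}w$ or $|X|^2_{g_\psi}$, and the drift term disappears entirely. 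Note also that the resulting upper bound is on $\dot\psi+(A+1)\psi$, not on $\dot\psi+\psi$; subtracting the known lower bound $\dot\psi+\psi\ge -C$ immediately gives $\psi\le C$, so your Step~2 integration is unnecessary.
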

     \begin{proof}
         Thanks to Corollary \ref{subharmonic function}, there is an $\varepsilon>0$ such that \begin{equation} \label{subharmonic}
             \Delta_wf\ge \varepsilon>0,\quad \textrm{on $M$}.
         \end{equation}
Notice that for $\tau\in(-\infty,\log T)$ by \eqref{obstruction flow}
\begin{equation*}
    \partial\bar{\partial}\dot{\psi}=\partial\bar{\partial}f_\psi-\Ric({w_\psi(\tau)})-g-\partial\bar{\partial}\psi,
\end{equation*}
thus by taking the trace with respect to the soliton metric $w$, we have
\begin{equation*}
    \Delta_w(\dot{\psi}+\psi-f_\psi)=-\tr_w\Ric({w_\psi(\tau)})-n.
\end{equation*}
Because we have assumed that $\Ric({w_\psi(\tau)})\le Aw_\psi(\tau)$, we deduce that
\begin{equation*}
    \Delta_w(\dot{\psi}+\psi-f_\psi)\ge-A\tr_ww_\psi(\tau)-n=-(A+1)n-A\Delta_w\psi,
\end{equation*}
and,
\begin{equation*}
    \Delta_{w}(\dot{\psi}+(A+1)\psi-f_\psi)\ge-(A+1)n.
\end{equation*}
Thanks to \ref{subharmonic},
\begin{equation*}
    \Delta_w\left(\dot{\psi}+(A+1)\psi-f_\psi+\frac{(A+2)n}{\varepsilon}f\right)\ge -(A+1)n+\varepsilon\frac{(A+2)n}{\varepsilon}=n>0.
\end{equation*}
Thus by the maximum principle, we have that 
\begin{equation*}
    \max_{\{f\le\lambda\}}\left(\dot{\psi}+(A+1)\psi-f_\psi+\frac{(A+2)n}{\varepsilon}f\right)=\max_{\{f=\lambda\}}\left(\dot{\psi}+(A+1)\psi-f_\psi+\frac{(A+2)n}{\varepsilon}f\right),
\end{equation*}
hence we have $\dot{\psi}+(A+1)\psi$ is uniformly bounded from above in $\{f\le\lambda\}$ by a constant independent of time thanks to Proposition \ref{a priori estimate for psi}.

Because $\psi$ and $\dot{\psi}+\psi$ are bounded from below (Proposition \ref{boundedness of 1+2} and Proposition \ref{lower bound of psi}), we have that $|\psi|+|\dot{\psi}|$ is uniformly bounded by a constant $C>0$ independent of time on $\{f\le\lambda\}$.
     \end{proof}
     For Proposition \ref{boundedness of potential} and Proposition \ref{total bound of psi}, we know that the two volume forms $w^n$ and $w_\psi(t)^n$ are equivalent along the normalized K\"ahler-Ricci flow. And this fact ensures that we could construct an energy functional to prove Theorem \ref{main}.
\subsection{Proof of main theorem}
In this section, we provide a detailed proof of Theorem \ref{main} in using an energy method.
\begin{proof}[Proof of Theorem \ref{main}]
     Choose $k\in\mathbb{N^*}$ such that $k\ge2$ and $2k-1\ge|\dot{\psi}|$ in $M\times (-\infty,\log T)$. The existence of $k$ is ensured by Lemma \ref{a priori estimate for psi} and Proposition \ref{total bound of psi}. We define an energy $A(\tau)$ for $\tau\in (-\infty,\log T)$ as:
     \begin{equation*}
             A(\tau):=\int_M\dot{\psi}(\tau)^{2k}e^{f_\psi(\tau)}w_\psi(\tau)^n.
         \end{equation*}
         For convenience, we omit the time dependence $\tau$, and we define $\Delta_{w_\psi,X}:=\Delta_{w_\psi}+\frac{1}{2}X$.
         \begin{claim}\label{1}
           The functional $A(\tau)$ is well defined for all $\tau\in (-\infty,\log T)$, and there exists a constant $B>0$ independent of time such that $A(\tau)\le B$ for all $\tau\in (-\infty,\log T)$. 
         \end{claim}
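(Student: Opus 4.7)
The plan is to split the integral defining $A(\tau)$ across $M = \{f\le\lambda\}\sqcup\{f>\lambda\}$ and bound each piece separately, using compactness on the sublevel set (properness of $f$, see \cite[Theorem 2.19]{Conlon-Deruelle-Sun}) and exponential spatial decay on its complement. Both pieces will be controlled by $\tau$-independent constants.

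On the compact region $\{f\le\lambda\}$, I would combine three uniform estimates. Proposition \ref{total bound of psi} gives $|\dot{\psi}|\le C$, Proposition \ref{boundedness of potential} gives $|X\cdot\psi|\le C_f$ so that $f_\psi = f+\tfrac{1}{2}X\cdot\psi$ is uniformly bounded by $\lambda+\tfrac{1}{2}C_f$, and Lemma \ref{a priori estimate for psi}(vi) gives $w_\psi^n\le A_0^n w^n$. Consequently the integrand $\dot{\psi}^{2k}e^{f_\psi}w_\psi^n$ is pointwise bounded on this region by a $\tau$-independent constant times $w^n$, so the compact integral is uniformly bounded in $\tau$.

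On the complementary region $\{f>\lambda\}$, I would apply Lemma \ref{a priori estimate for psi} item by item. From (i), $\dot{\psi}^{2k}\le C^{2k}e^{-2kf}f^{-2k(n+1)}$; from (iii) with $k=1$, $|X\cdot\psi|\le C_1 f^{-1}\le C_1/\lambda$ so that $e^{f_\psi}\le e^{C_1/(2\lambda)}e^{f}$; and from (vi), $w_\psi^n\le A_0^n w^n$. Together these yield the $\tau$-independent pointwise bound
\begin{equation*}
\dot{\psi}^{2k}e^{f_\psi}w_\psi^n \,\le\, C' e^{-(2k-1)f}f^{-2k(n+1)}w^n,
\end{equation*}
and since $2k-1\ge 3$ the integrability question reduces to showing $\int_M e^{-f}w^n<\infty$. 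The latter follows from the soliton identity $|\partial f|_g^2+R_w=f$ of Lemma \ref{soliton equalities} combined with the quadratic decay of $R_w$, which gives $f(x)\sim \tfrac{1}{2}r(x)^2$ at infinity. Since $(M,g)$ is asymptotically conical by Theorem \ref{Theorem of CDS}, the measure $w^n$ is comparable at infinity to a polynomial-times-radial measure on the cone, and the weighted integral reduces to a Gaussian-type integral which is manifestly finite.

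I do not expect a genuine obstacle in this claim; the heavy analytical work has already been carried out in Section \ref{A priori estimate at spatial infinity} and in the preceding Propositions \ref{boundedness of potential}, \ref{boundedness of 1+2}, \ref{lower bound of psi}, \ref{total bound of psi}. What remains is bookkeeping, together with the minor point of verifying that the equivalence $e^{f_\psi}\sim e^f$ holds uniformly on $M$, which splits naturally across the two regimes via Proposition \ref{boundedness of potential} on $\{f\le\lambda\}$ and Lemma \ref{a priori estimate for psi}(iii) on $\{f>\lambda\}$. Summing the two contributions yields the desired $\tau$-independent bound $A(\tau)\le B$.
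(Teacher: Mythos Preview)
Your decomposition and overall structure match the paper's proof exactly. There is, however, one misapplication: Lemma~\ref{a priori estimate for psi}(vi) is stated only for $(x,\tau)\in\{f>\lambda\}\times(-\infty,\log T)$, so you cannot invoke $w_\psi^n\le A_0^n w^n$ on the compact sublevel set $\{f\le\lambda\}$ via that lemma. The paper handles the volume form on $\{f\le\lambda\}$ differently: from the normalized flow equation \eqref{obstruction flow} one has $\log\frac{w_\psi^n}{w^n}=\dot\psi+\psi-\tfrac{1}{2}X\cdot\psi$, and each term on the right is uniformly bounded on $\{f\le\lambda\}$ by Propositions~\ref{boundedness of potential} and~\ref{total bound of psi}, yielding $w_\psi^n\le Cw^n$ there with $C$ independent of $\tau$. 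With this correction your argument is complete and agrees with the paper's.
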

         \begin{proof}[Proof of Claim \ref{1}]
                By Lemma \ref{a priori estimate for psi} and Proposition \ref{total bound of psi}, there exists a constant $C>0$ such that for all $\tau\in (-\infty,\log T)$, $|\dot{\psi}(\tau)|\le Ce^{-f}f^{-n-1}$ on $\{f\ge\lambda \}$, thus $\dot\psi(\tau)^{2k}\le C^{2k}(e^{-f}f^{-n-1})^{2k}$ on $\{f\ge\lambda\}$. By Lemma \ref{a priori estimate for psi}, $w_\psi^n\le A_0 w_0^n$, $w_0$ being the K\"ahler form of $g_0$ on $\{f\ge\lambda\}$. Hence on $\{f>\lambda\}$, by Lemma \ref{a priori estimate for psi} and Proposition \ref{boundedness of potential}, there exists a constant $C>0$ such that
    \begin{equation*}
        \dot\psi^{2k}e^{f_\psi}w_\psi^n\le C(e^{-f}f^{-n-1})^{2k}e^{f}w_0^n.
    \end{equation*}
    Since at infinity $f$ and $f_\psi$ are equivalent to $\frac{r^2}{2}$, $r$ being the radial function, we deduce that if $k\ge 2$, $A(\tau)$ is well defined, and there exists a constant $B>0$ independent of time such that 
    \begin{equation*}
        \int_{\{f>\lambda\}}\dot{\psi}^{2k}e^{f_\psi}w_\psi^n\le \frac{1}{2}B.
    \end{equation*}
    Moreover, on $\{f\le \lambda\}$, by Proposition \ref{boundedness of potential} and Proposition \ref{total bound of psi}, $|\dot\psi|+e^{f_\psi}\le C$ holds uniformly in time. Since $\log\frac{w_\psi^n}{w^n}=\dot\psi+\psi-\frac{1}{2}X\cdot\psi$, as a consequence of Proposition \ref{boundedness of potential} and \ref{total bound of psi}, $w_\psi^n\le Cw^n$ holds unifomly in time on $\{f\le\lambda\}$ . Therefore, there exists a constant $C>0$ such that
    \begin{equation*}
        \int_{\{f\le\lambda\}}\dot{\psi}^{2k}e^{f_\psi}w_\psi^n\le C\int_{\{f\le\lambda\}}w^n\le \frac{1}{2}B.
    \end{equation*}
    The right hand term is a time independent constant as required. 
         \end{proof}
         \begin{claim}\label{2}
     The functional $A(\tau)$ is continuously differentiable, and 
           \begin{equation}\label{derivative}
         \partial_\tau A(\tau)=2k\int_M(\Delta_{w_\psi,X}\dot{\psi}-\dot{\psi})\dot{\psi}^{2k-1}e^{f_\psi}w_\psi^n+\int_M\dot{\psi}^{2k}\Delta_{w_\psi,X}\dot{\psi}e^{f_\psi}w_\psi^n.
    \end{equation}
         \end{claim}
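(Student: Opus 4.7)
My plan is to establish the formula by formally differentiating under the integral sign (via chain rule), verifying the evolution equations needed for each factor, and then justifying the interchange of $\partial_\tau$ and $\int_M$ using the a priori estimates from Lemma \ref{a priori estimate for psi} together with Propositions \ref{boundedness of potential} and \ref{total bound of psi}.

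First I would compute the three factor-wise derivatives of the integrand $\dot\psi^{2k}e^{f_\psi}w_\psi^n$. Differentiating the normalized K\"ahler-Ricci flow equation \eqref{obstruction flow} in $\tau$ gives
\begin{equation*}
   \ddot\psi \;=\; \tr_{w_\psi}(i\partial\bar\partial\dot\psi) + \tfrac{1}{2}X\cdot\dot\psi - \dot\psi \;=\; \Delta_{w_\psi,X}\dot\psi - \dot\psi.
\end{equation*}
Since $f_\psi(\tau)=f+\tfrac{1}{2}X\cdot\psi(\tau)$ by Lemma \ref{correspondce}(vi), $\partial_\tau f_\psi=\tfrac{1}{2}X\cdot\dot\psi$. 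Finally $\partial_\tau w_\psi^n = \Delta_{w_\psi}\dot\psi\cdot w_\psi^n$ from the standard volume-form variation. Adding the three contributions
\begin{equation*}
   2k\dot\psi^{2k-1}\ddot\psi\,e^{f_\psi}w_\psi^n \;+\; \dot\psi^{2k}\bigl(\tfrac{1}{2}X\cdot\dot\psi\bigr)e^{f_\psi}w_\psi^n \;+\; \dot\psi^{2k}e^{f_\psi}\,\Delta_{w_\psi}\dot\psi\,w_\psi^n
\end{equation*}
and combining the last two into $\dot\psi^{2k}\Delta_{w_\psi,X}\dot\psi\,e^{f_\psi}w_\psi^n$ yields exactly the claimed identity.

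To legitimize the differentiation under the integral, I would fix $\tau_0\in(-\infty,\log T)$ and a compact interval $I\ni\tau_0$, and produce a $\tau$-uniform, integrable dominant on $I$ for the pointwise $\tau$-derivative of the integrand. Split the integral into the regions $\{f\le\lambda\}$ and $\{f>\lambda\}$. On $\{f\le\lambda\}$ (a compact set of finite $w^n$-mass), smoothness of $\psi$ on $M\times I$ together with the uniform bounds $|\psi|+|\dot\psi|\le C$ (Proposition \ref{total bound of psi}), $|X\cdot\psi|\le C_f$ (Proposition \ref{boundedness of potential}), and the resulting two-sided control on $w_\psi^n/w^n$ via \eqref{obstruction flow} give uniform pointwise bounds for the integrand and its time derivative, since $\ddot\psi,\,X\cdot\dot\psi,\,\Delta_{w_\psi}\dot\psi$ are continuous on the compact set $\{f\le\lambda\}\times I$. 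On $\{f>\lambda\}$, the time-\emph{independent} estimates of Lemma \ref{a priori estimate for psi} give $|\dot\psi|\le Ce^{-f}f^{-n-1}$, $|X\cdot\dot\psi|\le Nf^{-1}$, $|\Delta_{w_\psi}\dot\psi|\le C'|\partial\bar\partial\dot\psi|_{w_\psi}\le C'Nf^{-1}$, together with $w_\psi^n\le A_0^n w^n$ and $e^{f_\psi}\le e^Ce^f$ (since $|X\cdot\psi|$ is globally bounded). Hence the $\tau$-derivative of the integrand is dominated on $\{f>\lambda\}$ by a constant multiple of
\begin{equation*}
   (e^{-f}f^{-n-1})^{2k-1}\cdot f^{-1}\cdot e^{f}\,w^n \;\le\; C\,e^{-(2k-1)f}f^{-(2k-1)(n+1)-1}e^{f}\,w^n,
\end{equation*}
which is $w^n$-integrable on $\{f>\lambda\}$ (the argument is identical to Claim \ref{1} since at spatial infinity $f\sim r^2/2$ and $2k-1\ge 3$).

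The dominated convergence theorem then allows the interchange $\partial_\tau\int_M=\int_M\partial_\tau$, proving both that $A$ is differentiable at $\tau_0$ and formula \eqref{derivative}. Continuity of $\partial_\tau A(\tau)$ in $\tau$ follows by the same domination argument applied to the right-hand side of \eqref{derivative}: the integrand is jointly continuous in $(x,\tau)$ and bounded by the same time-independent integrable function on $\{f>\lambda\}$ and by a locally uniform bound on $\{f\le\lambda\}$. The main (only real) obstacle is constructing this uniform integrable majorant away from the soliton core, and this is precisely what the decay estimates in Lemma \ref{a priori estimate for psi} were set up to provide; once these are in hand, everything reduces to bookkeeping.
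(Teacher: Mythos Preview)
Your proposal is correct and follows essentially the same approach as the paper: compute the pointwise $\tau$-derivative of the integrand via the evolution equations for $\dot\psi$, $f_\psi$, and $w_\psi^n$, then justify differentiation under the integral sign by dominated convergence using the decay estimates of Lemma \ref{a priori estimate for psi} on $\{f>\lambda\}$. Your treatment is in fact slightly more thorough than the paper's, which only explicitly handles the region $\{f>\lambda\}$ and does not separately address the continuity of $\partial_\tau A(\tau)$; your explicit splitting into the compact core and the exterior, and your remark on continuity, fill in details the paper leaves implicit.
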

          \begin{proof}[Proof of Claim \ref{2}]
              Thanks to \eqref{obstruction flow}, one has the following point-wise computation:
              \begin{equation}\label{evolution equation of dotpsi}
                  \frac{\partial}{\partial\tau}\dot\psi=\Delta_{w_\psi}\dot\psi+\frac{X}{2}\cdot\dot\psi-\dot\psi=\Delta_{w_\psi,X}\dot\psi-\dot\psi.
              \end{equation}
              and
             \begin{equation}\label{evolution equation of vulome ration}
                 \frac{\partial}{\partial\tau}(e^{f_\psi}w_\psi^n)=\left(\Delta_{w_\psi}\dot\psi+\frac{1}{2}X\cdot\dot\psi\right)e^{f_\psi}w_\psi^n=e^{f_\psi}w_\psi^n\Delta_{w_\psi,X}\dot\psi .
             \end{equation}
             Combining \eqref{evolution equation of dotpsi} and \eqref{evolution equation of vulome ration}, we have,
               \begin{equation*}
                  \frac{\partial}{\partial\tau}( \dot{\psi}^{2k}e^{f_\psi}w_\psi^n)=\left(2k\dot\psi^{2k-1}(\Delta_{w_\psi,X}\dot\psi-\dot\psi)+\dot\psi^{2k}\Delta_{w_\psi,X}\dot\psi\right)e^{f_\psi}w_\psi^n.
               \end{equation*}
               At spatial infinity, $\Delta_{w_\psi,X}\dot\psi$ is uniformly bounded by a constant independent of time due to Proposition \ref{a priori estimate for psi}. Thus for any $\tau\in (-\infty,\log T)$ there is a time independent constant $C'$ such that on $\{f>\lambda\}$:
      \begin{equation*}
        |2k\dot\psi^{2k-1}(\Delta_{w_\psi,X}\dot\psi-\dot\psi)+\dot\psi^{2k}\Delta_{w_\psi,X}\dot\psi|\le C'C^{2k}(e^{-f}f^{-n-1})^{2k}+C'C^{2k-1}(e^{-f}f^{-n-1})^{2k-1}.
    \end{equation*}
    The right hand term is integrable at infinity, hence by dominated convergence theorem we have that \eqref{derivative} holds as expected.
           \end{proof}  
           \begin{claim}[Divergence theorem]\label{Divergence theorem}
    Let $m\ge 3$ be an odd integer, then \begin{equation}
        \int_M \dot\psi^{m}\Delta_{w_\psi,X}\dot\psi e^{f_\psi}w_\psi^n=-m\int_{M}|\partial\dot\psi|_{w_\psi}^2\dot\psi^{m-1}e^{f_\psi}w_\psi^n,
    \end{equation}
    holds for all $\tau\in (-\infty,\log T)$
\end{claim}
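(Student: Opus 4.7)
The strategy is to identify the formula as a weighted integration by parts on the noncompact manifold $M$, exploiting the fact that $\Delta_{w_\psi,X}$ is (half of) the drift Laplacian adapted to the weighted measure $e^{f_\psi}w_\psi^n$. Indeed, by Lemma \ref{correspondce}(vi), $X = \nabla^{g_\psi} f_\psi$ as a real vector field, so one checks the divergence form $\Delta_{w_\psi,X} h = \tfrac{1}{2}e^{-f_\psi}\operatorname{div}_{g_\psi}\!\bigl(e^{f_\psi}\nabla^{g_\psi} h\bigr)$. Applied formally with $u=\dot\psi^m$, $v=\dot\psi$, the weighted IBP identity
\begin{equation*}
    \int_M u\,\Delta_{w_\psi,X}v\, e^{f_\psi}w_\psi^n = -\int_M \operatorname{Re}\langle \partial u, \bar\partial v\rangle_{w_\psi}\, e^{f_\psi}w_\psi^n
\end{equation*}
combined with $\partial u = m\dot\psi^{\,m-1}\partial\dot\psi$ yields the claimed formula immediately, so the real work is making the IBP rigorous on the noncompact $M$.

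To that end, I would introduce the family of cutoffs $\chi_R(x) := \chi(f(x)/R)$ with $\chi\in C^\infty(\mathbb{R};[0,1])$ satisfying $\chi\equiv 1$ on $(-\infty,1/2]$ and $\chi\equiv 0$ on $[1,+\infty)$. Since $f$ is proper by \cite[Theorem 2.19]{Conlon-Deruelle-Sun}, $\chi_R$ has compact support in $\{f\le R\}$, and the standard compactly supported IBP applied to $u = \chi_R\dot\psi^{\,m}$ and $v = \dot\psi$ yields
\begin{equation*}
    \int_M \chi_R\dot\psi^{\,m}\Delta_{w_\psi,X}\dot\psi\, e^{f_\psi}w_\psi^n = -m\int_M \chi_R\dot\psi^{\,m-1}|\partial\dot\psi|^2_{w_\psi}\, e^{f_\psi}w_\psi^n - \operatorname{Err}(R),
\end{equation*}
where $\operatorname{Err}(R) := \int_M \dot\psi^{\,m}\,\operatorname{Re}\langle\partial\chi_R,\bar\partial\dot\psi\rangle_{w_\psi}\,e^{f_\psi}w_\psi^n$ is supported in the annular region $\{R/2\le f\le R\}$. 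The conclusion then follows by letting $R\to+\infty$, once we verify that (a) the LHS and RHS integrands lie in $L^1(e^{f_\psi}w_\psi^n)$, and (b) $\operatorname{Err}(R)\to 0$.

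Both (a) and (b) reduce to the decay estimates of Lemma \ref{a priori estimate for psi}: on $\{f>\lambda\}$ one has $|\dot\psi|\le Ce^{-f}f^{-n-1}$ and $|\partial\bar\partial\dot\psi|_{w_\psi}\le Nf^{-1}$, while $|X\cdot\psi|$ is bounded (so $e^{f_\psi}\lesssim e^f$) and $w_\psi^n$ is comparable to the K\"ahler cone volume form, which has polynomial growth in $f$. Together with $|\nabla f|_g = O(\sqrt{f})$ coming from the soliton identity of Lemma \ref{soliton equalities}, the LHS integrand is pointwise dominated by $Ce^{-(m-1)f}$ times a polynomial in $f$ at infinity, hence integrable for $m\ge 2$; on the compact region $\{f\le\lambda\}$ everything is bounded by continuity. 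The main technical obstacle is a gradient estimate on $\dot\psi$, which is not directly listed in Lemma \ref{a priori estimate for psi}. I would obtain it by interpolation between the $L^\infty$ bound on $\dot\psi$ and the $L^\infty$ bound on $\partial\bar\partial\dot\psi$---either via a Gagliardo--Nirenberg-type inequality on unit $g_0$-balls centered at infinity, or via parabolic interior estimates applied to the linear evolution $(\partial_\tau - \Delta_{w_\psi,X})\dot\psi = -\dot\psi$ from Proposition \ref{solution to heat equation}---yielding the intermediate decay $|\partial\dot\psi|_{w_\psi} \le Ce^{-f/2}f^{-(n+2)/2}$ at infinity. Substituting this into $\operatorname{Err}(R)$, using $|\partial\chi_R|_{w_\psi} \le CR^{-1}\sqrt{f}\cdot\mathbf{1}_{\{R/2\le f\le R\}}$, and integrating against the polynomial $f$-volume growth gives exponential decay of $\operatorname{Err}(R)$ in $R$, so $\operatorname{Err}(R)\to 0$ and the desired identity follows by dominated convergence on the LHS and monotone convergence (since $m-1$ is even) on the RHS.
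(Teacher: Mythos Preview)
Your cutoff approach is valid in spirit, but the paper's proof is more direct and sidesteps precisely what you identify as ``the main technical obstacle.'' Instead of smooth cutoffs, the paper exhausts $M$ by the sublevel sets $\{f_\psi\le R\}$ and applies Stokes' theorem with boundary. The key observation is that since $f_\psi$ is the Hamiltonian potential of $X$ with respect to $g_\psi$ (Lemma \ref{correspondce}(vi)), the outward unit normal to $\{f_\psi=R\}$ is $\nu=X/|X|_{w_\psi}$, so the boundary integrand involves only the \emph{directional} derivative $\partial\dot\psi/\partial\nu=(X\cdot\dot\psi)/|X|_{w_\psi}$---and $|X\cdot\dot\psi|\le Nf^{-1}$ is already supplied by Lemma \ref{a priori estimate for psi}(iv). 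No full gradient estimate on $\dot\psi$ is ever needed; the boundary term vanishes by the same $L^1$ accounting as in Claim \ref{2}, and positivity of $|\partial\dot\psi|^2_{w_\psi}\dot\psi^{\,m-1}$ (since $m-1$ is even) lets one pass to the limit.

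A minor caution on your route: the specific interpolation claim $|\partial\dot\psi|_{w_\psi}\le Ce^{-f/2}f^{-(n+2)/2}$ is not quite justified by Gagliardo--Nirenberg alone, since that requires the full real Hessian while Lemma \ref{a priori estimate for psi}(v) only controls the $(1,1)$-part $\partial\bar\partial\dot\psi$. A weaker polynomial bound $|\partial\dot\psi|\lesssim f^{-1/2}$ from local elliptic $W^{2,p}$ estimates (via $|\Delta_{w_\psi}\dot\psi|\le n|\partial\bar\partial\dot\psi|_{w_\psi}$), or the parabolic interior estimates you mention, would suffice---the factor $|\dot\psi|^m\le C^me^{-mf}f^{-m(n+1)}$ already furnishes more than enough exponential decay in $\operatorname{Err}(R)$ for $m\ge3$. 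But this is extra work that the paper's choice of exhaustion function renders unnecessary.
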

   \begin{proof}[Proof of Claim \ref{Divergence theorem}]
      
For $R>0$ large enough, the sub-level set $\{f_\psi=R\}$ is a smooth submanifold of $M$, then by Stokes divergence theorem:
\begin{equation*}
    \int_{\{f_\psi\le R\}}\dot\psi^{m}\Delta_{w_\psi,X}\dot\psi e^{f_\psi}w_\psi^n+m\int_{\{f_\psi\le R\}}|\partial\dot\psi|_{w_\psi}^2\dot\psi^{m-1}e^{f_\psi}w_\psi^n=\int_{\{f_\psi=R\}}\dot\psi^{m}\frac{\partial\dot\psi}{\partial\nu}e^{f_\psi}d\sigma.
\end{equation*}
Here $\nu$ is the unit pointing-out normal vector to $\{f_\psi=R\}$ with respect to metric $w_\psi$, and $d\sigma$ is the restriction of $w_\psi^n$ on $\{f=R\}$. Since $f_\psi$ is a Hamiltonian potential of $X$ with respect to the metric $w_\psi$, we have $\nu=\frac{X}{|X|_{w_\psi}}$ and $|\frac{\partial\dot\psi}{\partial\nu}|=\frac{|X\cdot\dot\psi|}{|X|_{w_\psi}}$. As a consequence of estimates in Lemma \ref{a priori estimate for psi}, in particular,  we have that $|\frac{\partial\dot\psi}{\partial\nu}|$ is bounded on $M$. If $m\ge 3$, by the same argument as in Claim \ref{2}, we have
\begin{equation*}
    \lim_{R\to+\infty}\int_{\{f_\psi=R\}}\dot\psi^{m}\frac{\partial\dot\psi}{\partial\nu}e^{f_\psi}d\sigma=0.
\end{equation*}
Moreover, since $m$ is odd, $|\partial\dot\psi|_{w_\psi}^2\dot\psi^{m-1}e^{f_\psi}$ is positive. Therefore by letting $R$ tend to $+\infty$ we have that $|\partial\dot\psi|_{w_\psi}^2\dot\psi^{m-1}e^{f_\psi}$ is Lebesgue integrable with respect to $w_\psi^n$ and
\begin{equation*}
    \int_M\dot\psi^{m}\Delta_{w_\psi,X}\dot\psi e^{f_\psi}w_\psi^n+m\int_M|\partial\dot\psi|_{w_\psi}^2\dot\psi^{m-1}e^{f_\psi}w_\psi^n=0,
\end{equation*}
holds as expected for all $\tau\in (-\infty,\log T)$.
   \end{proof}
    \begin{claim}\label{3}
       The function $\dot{\psi}$ is identically 0.
     \end{claim}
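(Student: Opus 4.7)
The plan is to establish the differential inequality $\partial_\tau A(\tau)\le -2k\,A(\tau)$ on $(-\infty,\log T)$, which combined with the time-independent upper bound $A(\tau)\le B$ from Claim \ref{1} forces $A\equiv 0$: integration gives $A(\tau)\le B e^{-2k(\tau-\tau_0)}$ for every $\tau_0<\tau$, and letting $\tau_0\to-\infty$ yields $A(\tau)=0$. Since $\dot\psi^{2k}e^{f_\psi}w_\psi^n$ is a continuous nonnegative integrand, the vanishing of $A$ implies $\dot\psi\equiv 0$ pointwise, as required.

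Starting from the derivative formula \eqref{derivative}, I would treat the three summands separately. The piece $2k\int_M \dot\psi^{2k-1}\Delta_{w_\psi,X}\dot\psi\,e^{f_\psi}w_\psi^n$ matches the Divergence theorem (Claim \ref{Divergence theorem}) with $m=2k-1$ (odd and $\ge 3$), producing the good contribution $-2k(2k-1)\,I(\tau)$, where $I(\tau):=\int_M \dot\psi^{2k-2}|\partial\dot\psi|_{w_\psi}^2 e^{f_\psi}w_\psi^n\ge 0$. The piece $-2k\int_M \dot\psi^{2k}\,e^{f_\psi}w_\psi^n$ is exactly $-2kA(\tau)$. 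The delicate term is $\int_M \dot\psi^{2k}\Delta_{w_\psi,X}\dot\psi\,e^{f_\psi}w_\psi^n$, where the exponent is even and the Divergence theorem does not apply directly. Here I would use the pointwise Kähler identity $\Delta_{w_\psi,X}(\dot\psi^{2k+1})=(2k+1)\dot\psi^{2k}\Delta_{w_\psi,X}\dot\psi+(2k+1)(2k)\dot\psi^{2k-1}|\partial\dot\psi|_{w_\psi}^2$ and integrate against $e^{f_\psi}w_\psi^n$; together with the vanishing $\int_M \Delta_{w_\psi,X}(\dot\psi^{2k+1})\,e^{f_\psi}w_\psi^n=0$ (proved by the same exhaustion/boundary-term argument as in Claim \ref{Divergence theorem}, now with $m=2k+1$) this yields
\[
\int_M \dot\psi^{2k}\Delta_{w_\psi,X}\dot\psi\,e^{f_\psi}w_\psi^n = -2k\int_M \dot\psi^{2k-1}|\partial\dot\psi|_{w_\psi}^2 e^{f_\psi}w_\psi^n.
\]
The integer $k$ was chosen precisely so that $2k-1\ge |\dot\psi|$ on $M\times(-\infty,\log T)$ (by Lemma \ref{a priori estimate for psi} and Proposition \ref{total bound of psi}), hence $|\dot\psi^{2k-1}|\le (2k-1)\dot\psi^{2k-2}$ pointwise and
\[
\left|\int_M \dot\psi^{2k-1}|\partial\dot\psi|_{w_\psi}^2 e^{f_\psi}w_\psi^n\right|\le (2k-1)\,I(\tau).
\]
Summing the three contributions gives $\partial_\tau A(\tau)\le -2k(2k-1)\,I(\tau)-2k A(\tau)+2k(2k-1)\,I(\tau)=-2k A(\tau)$, as claimed.

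The main technical obstacle is the vanishing $\int_M \Delta_{w_\psi,X}(\dot\psi^{2k+1})\,e^{f_\psi}w_\psi^n=0$, since the monotone convergence used in Claim \ref{Divergence theorem} is unavailable when $\dot\psi^{2k+1}$ can change sign. I would instead argue by dominated convergence on the exhaustion by sublevel sets $\{f_\psi\le R\}$: the boundary integral $\int_{\{f_\psi=R\}}\dot\psi^{2k+1}\frac{\partial\dot\psi}{\partial\nu}e^{f_\psi}d\sigma$ tends to zero as $R\to\infty$ because of the exponential decay $|\dot\psi|=O(e^{-f}f^{-n-1})$ (Lemma \ref{a priori estimate for psi}) combined with the uniform control on $|X\cdot\dot\psi|/|X|_{w_\psi}$ on the ends of $M$, while the interior integrand $|\dot\psi^{2k-1}|\partial\dot\psi|_{w_\psi}^2|\,e^{f_\psi}$ is pointwise dominated by the integrable function $(2k-1)\,\dot\psi^{2k-2}|\partial\dot\psi|_{w_\psi}^2\,e^{f_\psi}$ controlling $I(\tau)$.
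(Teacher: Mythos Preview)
Your proof is correct and follows essentially the same route as the paper. Both arguments establish the differential inequality $\partial_\tau A(\tau)\le -2kA(\tau)$ via the same Stokes identity on sublevel sets $\{f_\psi\le R\}$ together with the pointwise bound $|\dot\psi^{2k-1}|\le(2k-1)\dot\psi^{2k-2}$; the only cosmetic difference is that you package the even-exponent integration by parts as the vanishing of $\int_M\Delta_{w_\psi,X}(\dot\psi^{2k+1})e^{f_\psi}w_\psi^n$ and then bound, whereas the paper applies the bound $|\dot\psi|\le 2k-1$ inside the finite-$R$ identity before passing to the limit.
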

     \begin{proof}[Proof of Claim \ref{3}]
         By Claim \ref{1}, $A(\tau)$ is a well-defined positive energy and there exists a uniform constant $B>0$ such that $A(\tau)\le B,\forall\tau\in(-\infty,\log T)$. By Claim \ref{2} and Claim \ref{Divergence theorem}
         \begin{equation*}
             \begin{aligned}
                 \partial_\tau A(\tau)&=2k\int_M(\Delta_{w_\psi,X}\dot{\psi}-\dot{\psi})\dot{\psi}^{2k-1}e^{f_\psi}w_\psi^n+\int_M\dot{\psi}^{2k}\Delta_{w_\psi,X}\dot{\psi}e^{f_\psi}w_\psi^n\\
             &=-2kA(\tau)-2k(2k-1)\int_M|\partial\dot{\psi}|_{w_\psi}^2\dot{\psi}^{2k-2}e^{f_\psi}w_\psi^n+\int_M\dot{\psi}^{2k}\Delta_{w_\psi,X}\dot{\psi}e^{f_\psi}w_\psi^n\\
             \end{aligned}
         \end{equation*}
         For $R>0$ large enough, as in the proof of Claim \ref{Divergence theorem}, by Stokes divergence theorem, we have,
         \begin{equation*}
              \int_{\{f_\psi\le R\}}\dot\psi^{2k}\Delta_{w_\psi,X}\dot\psi e^{f_\psi}w_\psi^n+2k\int_{\{f_\psi\le R\}}|\partial\dot\psi|_{w_\psi}^2\dot\psi^{2k-1}e^{f_\psi}w_\psi^n=\int_{\{f_\psi=R\}}\dot\psi^{2k}\frac{\partial\dot\psi}{\partial\nu}e^{f_\psi}d\sigma,
         \end{equation*}
         and
         \begin{equation*}
              \lim_{R\to+\infty}\int_{\{f_\psi=R\}}\dot\psi^{2k}\frac{\partial\dot\psi}{\partial\nu}e^{f_\psi}d\sigma=0.
         \end{equation*}
         Since $2k-1\ge |\dot\psi|$ holds uniformly in time, we get
            \begin{equation*}
              \int_{\{f_\psi\le R\}}\dot\psi^{2k}\Delta_{w_\psi,X}\dot\psi e^{f_\psi}w_\psi^n\le \int_{\{f_\psi=R\}}\dot\psi^{2k}\frac{\partial\dot\psi}{\partial\nu}e^{f_\psi}d\sigma+2k(2k-1)\int_{\{f_\psi\le R\}}|\partial\dot\psi|_{w_\psi}^2\dot\psi^{2k-2}e^{f_\psi}w_\psi^n,
         \end{equation*}
         Due to Claim \ref{Divergence theorem}, $|\partial\dot\psi|_{w_\psi}^2\dot\psi^{2k-2}e^{f_\psi}$ is Lebesgue integrable with respect to $w_\psi^n$. By letting $R$ tend to $+\infty$, we conclude that
         \begin{equation*}
             \int_M\dot\psi^{2k}\Delta_{w_\psi,X}\dot\psi e^{f_\psi}w_\psi^n\le 2k(2k-1)\int_M|\partial\dot\psi|_{w_\psi}^2\dot\psi^{2k-2}e^{f_\psi}w_\psi^n.
         \end{equation*}
         This implies that
         \begin{equation*}
             \partial_\tau A(\tau)\le -2kA(\tau).
         \end{equation*}
        and that $e^{2k\tau}A(\tau)$ is non increasing. For all $\rho\le \tau$, by integration, we have
         \begin{equation*}
             e^{2k\tau}A(\tau)\le e^{2k\rho}A(\rho)\le Be^{2k\rho}.
         \end{equation*}
       By letting $\rho$ tend to $-\infty$, we get $A(\tau)=0$ for any $\tau$, thus $\dot{\psi}$ is identically 0. 
     \end{proof}
     Once we have $\dot\psi=0$, we deduce $\psi=0$.
     \begin{claim}\label{4}
       The function $\psi$ is identically 0. Hence $g_\varphi(t)=g(t)$ for all $t\in (0,T)$.
     \end{claim}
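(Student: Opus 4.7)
The plan is to exploit the time-independence forced by Claim \ref{3}, which reduces the normalized K\"ahler-Ricci flow to a time-independent elliptic equation, and then to conclude $\psi \equiv 0$ via a Hopf-type maximum principle. Once $\psi \equiv 0$ is in hand, the correspondence in Lemma \ref{correspondce} delivers $\varphi \equiv 0$ immediately.

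First, since $\dot{\psi} \equiv 0$ by Claim \ref{3}, the function $\psi(x,\tau) = \psi(x)$ is independent of $\tau$. Substituting into the normalized flow \eqref{obstruction flow} yields the elliptic equation
\begin{equation*}
    \log\frac{w_\psi^n}{w^n} = \psi - \frac{X}{2}\cdot\psi \qquad \text{on } M,
\end{equation*}
in which $w_\psi = w + i\partial\bar\partial\psi$ remains a genuine K\"ahler form. By Lemma \ref{a priori estimate for psi}(ii), $|\psi|\le Ce^{-f}f^{-n-1}$ on $\{f>\lambda\}$, and since $f$ is proper this forces $\psi\to 0$ at spatial infinity, so that both $\sup_M \psi$ and $\inf_M \psi$ are attained whenever they are nonzero.

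The heart of the argument is then the following max/min principle. If $\sup_M \psi > 0$ were attained at some $x_0\in M$, then $\nabla\psi(x_0)=0$ would force $(X\cdot\psi)(x_0)=0$, while the second-derivative test at an interior maximum would yield $i\partial\bar\partial\psi(x_0)\le 0$ as a Hermitian $(1,1)$-form; since both $w_\psi(x_0)$ and $w(x_0)$ are positive definite, this gives $w_\psi(x_0)^n \le w(x_0)^n$. Evaluating the elliptic equation at $x_0$ would then produce the contradiction
\begin{equation*}
    0 < \psi(x_0) = \log\frac{w_\psi(x_0)^n}{w(x_0)^n} \le 0.
\end{equation*}
The symmetric argument at an interior minimum rules out $\inf_M \psi < 0$, so $\psi \equiv 0$.

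Finally, Lemma \ref{correspondce}(i) gives $\varphi(x,t) = t\,\psi(\Phi_t(x)) \equiv 0$ on $M\times(0,T)$, hence $w_\varphi(t) = w(t) + i\partial\bar\partial\varphi(t) = w(t)$, which yields $g_\varphi(t) = g(t)$ for every $t\in(0,T)$ and completes the proof of Theorem \ref{main}. I do not anticipate a substantive obstacle at this stage: the real analytical effort was already expended in Claim \ref{3}, and the present step only combines time-independence with a classical maximum principle, powered by the exponential decay of $\psi$ at infinity established in Section \ref{A priori estimate at spatial infinity}.
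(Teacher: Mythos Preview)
Your proposal is correct and follows essentially the same approach as the paper: reduce to the time-independent equation $\log\frac{w_\psi^n}{w^n}+\frac{X}{2}\cdot\psi-\psi=0$, use the decay $\psi\to 0$ at infinity to locate an interior extremum, and derive a contradiction from the first- and second-order conditions there. Your write-up is in fact slightly more explicit than the paper's (spelling out that $\nabla\psi(x_0)=0$ kills the drift term and that $i\partial\bar\partial\psi(x_0)\le 0$ forces the volume-form inequality), but the argument is the same.
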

     \begin{proof}[Proof of Claim \ref{4}]
          Since $\dot{\psi}$ is identically 0, \eqref{obstruction flow} reduces to
 \begin{equation*}
     \log\frac{w_\psi^n}{w^n}+\frac{X}{2}\cdot\psi-\psi=0.
 \end{equation*}
 Fix $\tau\in (-\infty,\log T)$, by Lemma \ref{a priori estimate for psi}, $|\psi(\tau)|$ tends to 0 at spatial infinity. If $\sup_M\psi(\tau)>0$, there exists $x_0\in M$ such that $\psi(x_0,\tau)=\sup_M\psi(\tau)>0$. Due to the maximum principle, at the point $x_0$:
 \begin{equation*}
     0\ge\log\frac{w_\psi(\tau)^n}{w^n}(x_0)+\frac{X}{2}\cdot\psi(x_0)=\psi(x_0).
 \end{equation*}
 This gives a contradiction. Hence $\sup_M\psi(\tau)\le0$. For the same reason, we prove that $\inf_M\psi(\tau)\ge 0$, we have $\psi(\tau)=0$ as desired.
     \end{proof}
\end{proof}
\section{Open questions}
     This section consists of the open questions that the author finds helpful for further study.
     \begin{ques}
         Does the Ricci flow evolve uniquely after a singularity? If not, is
there a selection principle for the best flow?
     \end{ques}
     This is still the question asked in \cite{Feldman-Ilmanen-Knopf}. This paper did not provide a full answer to this question. To get a full answer to this question, we need to think about the following questions.
     \begin{ques}\label{7.2}
         Let $g_1(t)_{t\in (0,T)}$ be a smooth complete solution to Ricci flow, such that $\pi_*g_1(t)$ converges to $g_0$ locally smoothly when $t$ tends to 0, is it true that $g(t)$ is K\"ahler with respect to $J$ for all $t\in (0,T)$?
     \end{ques}
S. Huang and L. Tam \cite{Huang-Tam} have shown that a smooth, complete solution $ g(t) $ to the Ricci flow on the interval $ t \in [0, T] $, with initial metric $ g(0) $ Kähler, remains Kähler for all $ t \in [0, T] $ provided the full Riemann curvature operator is bounded by $ \frac{C}{t} $. But for a solution to Ricci flow with sigular initial data, the preservation of K\"ahlerity is still an open question.

As a K\"ahler cone may be desingularied by non-biholomorphic complex manifolds (but diffeomorphic), so Question \ref{7.2} is challenging but delicate.
\begin{ques}
     Let $g_1(t)_{t\in (0,T)}$ be a smooth complete solution to K\"ahler-Ricci flow, such that $\pi_*g_1(t)$ converges to $g_0$ locally smoothly when $t$ tends to 0, is it true that $JX$ is a Killing vector field for $g(t)$ for any $t\in (0,T)$?
\end{ques}
In Theorem \ref{main}, we assume that $ JX $ is a Killing vector field with respect to $ g_\varphi(t_0) $ for some $ t_0 \in (0, T) $. Given that $ J_0X $ is Killing with respect to $ g_0 $, we aim to determine whether $ JX $ remains Killing for $ g_1(t) $ for all $ t \in (0, T) $.
\begin{ques}
     Let $g_1(t)_{t\in (0,T)}$ be a smooth complete solution to K\"ahler-Ricci flow, such that $\pi_*g_1(t)$ converges to $g_0$ locally smoothly when $t$ tends to 0. Is there any algebraic obstruction of the exceptional set $E$ to ensure that $w_1(t)-w(t)$ is globally exact on $M$?
\end{ques}
In Remark \ref{rmk}, we have noticed that $\pi_*(w_1(t)-w(t))$ is exact on $C_0-\{o\}$. But if we want get that $w_1(t)-w(t)$ is globally exactly on $M$, we will need more information of this exceptional set $E$. We want to know if there is any algebraic obstruction of $E$ to ensure that $w_1(t)-w(t)$ is exact on $M$.
\begin{ques}
    Is the curvature condition sharp in Theorem \ref{main}?
\end{ques}
This question is about the non-uniqueness problem of K\"ahler-Ricci flow. Here one want to know whether there exists an asymptotical conical solution to K\"ahler-Ricci flow different form $g(t)$ with unbounded Ricci curvature or scalar curvature.

We also consider these questions in a more general setting, namely, for solutions to the Ricci flow on Riemannian manifolds that are asymptotically conical. A trivial example of a asymptotically conical expanding Ricci soliton is the Gauss soliton $(\R^n,g_{eucl},\frac{1}{2}r\partial_r)$, the self-similar solution to the Ricci flow is $g(t)=g_{eucl}$.

Another example is the Bryant soliton \cite{Bryant}, a one-parameter family of expanding gradient Ricci solitons $(\mathbb{R}^n, g_c, \nabla^{g_c} f_c)_{c \in (0,1)}$, which are $SO(n-1)$-invariant and satisfy $\operatorname{Rm}(g_c) > 0$. These self-similar solutions are asymptotic to the cone $(C(\mathbb{S}^{n-1}),\, dr^2 + (cr)^2 g_{\mathbb{S}^{n-1}},\, \tfrac{1}{2}r\partial_r)$.

In \cite{Deruelle2}, it is shown that the limit $\lim_{f\to +\infty}(f+\mu(g))^{\frac{n}{2}-1}e^{f+\mu(g)}\Ric(g)$ exists for a normalized asymptotically Ricci flat expanding gradient soliton $(M^n,g,\nabla^gf)$. Here $\mu(g)$ denotes the entropy. In \cite{Siepmann}, it is proved that $\Ric(g(t))=O(t^{-1}e^{-\frac{r^2}{4t}}(\frac{r^2}{4t})^{\kappa+1})$ with some $\kappa>0$ for a solution to Ricci flow which is asymptotic to Ricci flat cone $(C,g_C)$ with radial function $r$.
\begin{ques}\label{Ilmanen conjecture}
    What is the optimal rate of convergence of $\Ric(g(t))$ for a solution $g(t)_{t\in (0,T)}$ to Ricci flow which is asymptotic to a Ricci flat cone? i.e. what is the optimal $\tau\in\R$ such that on some $\Omega_\lambda$ we have
    \begin{equation*}
        \Ric(g(t))=O\left(t^{-1}e^{-\frac{r^2}{4t}}\left(\frac{r^2}{4t}\right)^{\tau}\right),\quad \forall t\in (0,T).
    \end{equation*}
\end{ques}
One has the conjecture that the optimal rate should be $-\frac{n}{2}+1$. If we assume that there exists a complete expanding gradient Ricci soliton which desingularies this Ricci flat cone, then with the maximum principle in Section \ref{A priori estimate at spatial infinity}, one can prove that $\Ric(g(t))=O\left(t^{-1}e^{-\frac{r^2}{4t}}\left(\frac{r^2}{4t}\right)^{\tau}\right)$ for any Ricci flow $(g(t))_{t\in (0,T)}$ that smooths out this cone.

In \cite{Feldman-Ilmanen-Knopf}, besides the expanding K\"ahler-Ricci soliton, they constructed shrinking and steady K\"ahler-Ricci soliton whose pointed Gromov-Hausdorff limit is the K\"ahler cone $\C^n/\Z^k$ with $k\le n$.
\begin{ques}
    Is there a solution to Ricci flow $g(t)_{t\in (0,T)}$ on a smooth manifold $M$ such that the pointed Gromov-Hausdorff limit of $(M,g(t))$, when $t$ tends to 0, is $(\C^n/\Z^k,\partial\bar\partial\left(\frac{|\cdot|^{2p}}{p}\right))$ with $k\le n$ and some $p>0$?
\end{ques}
In \cite{Ozuch-Lopez}, Ozuch and Lopez proved that there cannot be any ALE expanding soliton on the minimal resolution of $\C^2/\Gamma$ for $\Gamma\subset SU(2)$. However, this result cannot provide a negative answer to Question \ref{Ilmanen conjecture} in complex dimension 2, we notice that the action of $\Z_k$ on $\C^2$ is in $U(2)\backslash SU(2)$.

\bibliographystyle{amsalpha}

\begin{thebibliography}{BCHM10}
\bibitem[Aub82]{T.Aubin}
T. Aubin, \emph {Nonlinear analysis on manifolds. Monge-Amp\`ere equations}, Grundlehren der mathematischen Wissenschaften, 252, Springer, New York, 1982; MR0681859
\bibitem[AK22]{Angenent-Knopf}
S.~B. Angenent and D. Knopf, \emph{Ricci solitons, conical singularities, and nonuniqueness}, Geom. Funct. Anal. {\bf 32} (2022), no.~3, 411--489; MR4431121
\bibitem[Bry]{Bryant}
R.~L. Bryant, \emph{Ricci flow solitons in dimension three with SO(3)-symmetries}, unpublished notes available at \url{https://services.math.duke.edu/~bryant/3DRotSymRicciSolitons.pdf}.
\bibitem[Che09]{B-L.Chen}
B.-L. Chen, \emph{Strong uniqueness of the Ricci flow}, J. Differential Geom. {\bf 82} (2009), no.~2, 363--382; MR2520796
\bibitem[Cho07]{Chow}
B. Chow et al., {\it The Ricci flow: techniques and applications. Part I}, Mathematical Surveys and Monographs, 135, Amer. Math. Soc., Providence, RI, 2007; MR2302600
\bibitem[CD20]{Conlon-Deruelle} 
R.~J. Conlon and A. Deruelle, \emph{Expanding K\"ahler-Ricci solitons coming out of K\"ahler cones}, J. Differential Geom. {\bf 115} (2020), no.~2, 303--365; MR4100705
\bibitem[CDS24]{Conlon-Deruelle-Sun}
R.~J. Conlon, A. Deruelle and S. Sun, \emph{Classification results for expanding and shrinking gradient K\"ahler-Ricci solitons}, Geom. Topol. {\bf 28} (2024), no.~1, 267--351; MR4711837
\bibitem[CTY11]{Chau-Tam-Yu}
A. Chau, L.-F. Tam and C. Yu, \emph{Pseudolocality for the Ricci flow and applications}, Canad. J. Math. {\bf 63} (2011), no.~1, 55--85; MR2779131
\bibitem[CZ06]{Chen-Zhu}
B.-L. Chen and X.-P. Zhu, \emph{Uniqueness of the Ricci flow on complete noncompact manifolds}, J. Differential Geom. {\bf 74} (2006), no.~1, 119--154; MR2260930
\bibitem[Der]{Deruelle}
A. Deruelle, \emph{Cours M2 : Une introduction au flot de Ricci}, online unpublished notes available at \url{https://www.imo.universite-paris-saclay.fr/fr/perso/alix-deruelle/notes-de-cours/}.
\bibitem[Der17]{Deruelle2}
A. Deruelle, \emph{Unique continuation at infinity for conical Ricci expanders}, Int. Math. Res. Not. IMRN {\bf 2017}, no.~10, 3107--3147; MR3658133
\bibitem[DS20]{Deruelle-Schulze}
A. Deruelle and F. Schulze, \emph{A relative entropy and a unique continuation result for Ricci expanders}, Comm. Pure Appl. Math. {\bf 76} (2023), no.~10, 2613--2692; MR4630599
\bibitem[FIK03]{Feldman-Ilmanen-Knopf}
M. Feldman, T. Ilmanen and D. Knopf, \emph{Rotationally symmetric shrinking and expanding gradient K\"ahler-Ricci solitons}, J. Differential Geom. {\bf 65} (2003), no.~2, 169--209; MR2058261
\bibitem[Ham82]{R.Hamilton}
R.~S. Hamilton, \emph{Three-manifolds with positive Ricci curvature}, J. Differential Geometry {\bf 17} (1982), no.~2, 255--306; MR0664497
\bibitem[HT18]{Huang-Tam}
S. Huang and L.-F. Tam, \emph{K\"ahler-Ricci flow with unbounded curvature}, Amer. J. Math. {\bf 140} (2018), no.~1, 189--220; MR3749193
\bibitem[Kot10]{Kotschwar3}
B.~L. Kotschwar, \emph{Backwards uniqueness for the Ricci flow}, Int. Math. Res. Not. IMRN {\bf 2010}, no.~21, 4064--4097; MR2738351
\bibitem[Kot14]{Kotschwar1}
B.~L. Kotschwar, \emph{An energy approach to the problem of uniqueness for the Ricci flow}, Comm. Anal. Geom. {\bf 22} (2014), no.~1, 149--176; MR3194377
\bibitem[Kot17]{Kotschwar2}
B.~L. Kotschwar, \emph{Short-time persistence of bounded curvature under the Ricci flow}, Math. Res. Lett. {\bf 24} (2017), no.~2, 427--447; MR3685279
\bibitem[Lee25]{M-C.Lee}
M.-C.Lee, \emph{Uniqueness of Ricci flow with scaling invariant estimates}, \textnormal{arXiv:2503.20292} (2025).
\bibitem[OL24]{Ozuch-Lopez}
T. Ozuch and I.~M. Lopez \emph{Ancient and expanding spin ALE Ricci flows},\textnormal{arxiv:2407.18438}(2024)
\bibitem[Per02]{Perelman}
G.~Y. Perelman, \emph{The entropy formula for the Ricci flow and its geometric applications}, \textnormal{arxiv:math/0211159} (2002).
\bibitem[Sie13]{Siepmann}
M.~Siepmann, \emph{Ricci flows of {R}icci flat cones}, Ph.D. thesis, ETH
  Z\"urich, 2013, available at
  \url{http://e-collection.library.ethz.ch/eserv/eth:7556/eth-7556-02.pdf}.
\bibitem[Shi89]{W-X.Shi}
W.-X. Shi, \emph{Deforming the metric on complete Riemannian manifolds}, J. Differential Geom. {\bf 30} (1989), no.~1, 223--301; MR1001277
\bibitem[Top10]{Topping}
P.~M. Topping, \emph{Ricci flow compactness via pseudolocality, and flows with incomplete initial metrics}, J. Eur. Math. Soc. (JEMS) {\bf 12} (2010), no.~6, 1429--1451; MR2734348
\bibitem[Zha09]{Z-H.Zhang}
Z.~H. Zhang, \emph{On the completeness of gradient Ricci solitons}, Proc. Amer. Math. Soc. {\bf 137} (2009), no.~8, 2755--2759; MR2497489
\end{thebibliography}

\end{document}